\documentclass{amsart}

\raggedbottom \headheight 0pt \columnsep=0.8cm \raggedbottom
\textwidth=15cm \textheight=22cm \topmargin=-0.9cm \hoffset=-1 cm
\parskip=2mm

\usepackage{amsmath}
\usepackage{amssymb,amsfonts,amsthm}
\usepackage{cite}
\usepackage{graphicx}
\usepackage[colorlinks=true, allcolors=blue]{hyperref}
\usepackage[all]{xy}
\usepackage{color,xcolor}

\usepackage{mathrsfs}
\usepackage[all]{xy}
\usepackage{mathabx} 
\usepackage{mathtools}
\usepackage{mathbbol}

\usepackage{wasysym}
\usepackage{cancel,ulem}

\usepackage{mathtools}
\usepackage{mathbbol}
\usepackage{tikz}
\usetikzlibrary{decorations, decorations.markings} 

\usepackage{ifthen}

\newcommand\abs[1]{\left\lvert#1\right\rvert}
\newcommand\bracket[1]{\left\langle#1\right\rangle}

\newtheorem{theorem}{Theorem}[section]
\newtheorem{lemma}[theorem]{Lemma}
\newtheorem{proposition}[theorem]{Proposition}
\newtheorem{corollary}[theorem]{Corollary}

\newtheorem*{assumption}{Assumption}
\newtheorem*{convention}{Convention}

\theoremstyle{definition}
\newtheorem{defn}[theorem]{Definition}

\theoremstyle{remark}
\newtheorem{remark}[theorem]{Remark}
\newtheorem{example}[theorem]{Example}
\newtheorem*{claim}{Claim}

\newtheorem{question}{Question}

\def\R{\mathbb R}

\def\C{\mathcal C}

\def\sc{\mathcal{SC}}
\def\sh{\mathcal{SH}}

\def\diam{\mathrm{diam}}

\def\stab{\mathrm{Stab}}

\title{A general construction of simultaneously hyperbolic elements}

\author{Jiaqi Cui}
\address{School of Mathematical Sciences, East China Normal University, Shanghai 200241, China P. R.}
\email{51275500053@stu.ecnu.edu.cn}

\author{Renxing Wan}
\address{School of Mathematical Sciences,  Key Laboratory of MEA (Ministry of Education) \& Shanghai Key Laboratory of PMMP,  East China Normal University, Shanghai 200241, China P. R.}
\email{rxwan@math.ecnu.edu.cn}

\keywords{Gromov-hyperbolic space, simultaneously contracting elements, Extension Lemma, simultaneously hyperbolic elements, positive density}

\begin{document}

\begin{abstract}
    In this paper, we give an explicit construction of simultaneously hyperbolic elements in a group acting on finitely many Gromov-hyperbolic spaces under the weakest conditions. This essentially generalizes results of Clay-Uyanik in \cite{CU18}, of Genevois in \cite{Gen19}, and of Balasubramanya-Fern\'{o}s in \cite{BF24}. Besides, we show that the set of simultaneously hyperbolic elements has strictly positive density with respect to any proper word metric under the weakest conditions. This recovers many classical counting results, eg. the main result of Wiest in \cite{Wie17}. 
    
    As an important ingredient in the proof of main results, we show that the set of simultaneously contracting elements in a group acting on finitely many metric spaces with contracting property has strictly positive density with respect to any proper word metric. This generalizes two results of Wan-Xu-Yang in \cite{WXY24} and of Balasubramanya-Fern\'{o}s in \cite{BF24}. 
\end{abstract}
\maketitle
%\tableofcontents

\section{Introduction}

This paper solves the following open problem:
\begin{question}\label{IntroQue}
    Let $G$ be a group acting non-elliptically and non-horocyclically on finitely many  Gromov-hyperbolic spaces $X_1,\ldots, X_l$. Is there a group element $g\in G$ such that $g$ is simultaneously hyperbolic on each $X_i$?
\end{question}
According to Gromov \cite{Gro87}, an isometric group action on a Gromov-hyperbolic space is non-elliptic and non-horocyclic is equivalent to saying that there exists at least one hyperbolic element under this action. It is easy to see that this condition is necessary in Question \ref{IntroQue}.

Question \ref{IntroQue} was considered by Clay-Uyanik in \cite{CU18}, Genevois in \cite{Gen19} and Balasubramanya-Fern\'{o}s in \cite{BF24}. After adding some different conditions, they all gave affirmative answers to Question \ref{IntroQue}. Specifically, both results, i.e. \cite[Theorem 5.1]{CU18} and \cite[Proposition 6.68]{Gen19} required an additional assumption that every group element is either elliptic or hyperbolic on each $X_i$. As a contrast, the result \cite[Theorem H]{BF24}  required an additional assumption that each action $G\curvearrowright X_i$ is of general type; see Theorem \ref{Thm: Classification} for details about classifications of group actions on Gromov-hyperbolic spaces. 

Our first result answers Question \ref{IntroQue} affirmatively without adding any condition. 

\begin{theorem}[Theorem~\ref{simulhypiso}]\label{IntroThm: SH}
     Let $G$ be a group acting non-elliptically and non-horocyclically on finitely many Gromov-hyperbolic spaces $X_1,\ldots, X_l$. Then the set $$\mathcal{SH}(G)=\{g\in G: g \text{ is simultaneously hyperbolic on each } X_i \text{ for } 1\le i\le l\}$$ is not empty. 
\end{theorem}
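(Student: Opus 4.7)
The plan is to realize a simultaneously hyperbolic element as a controlled product of large powers of hyperbolic elements, one chosen per space, via a simultaneous extension-lemma argument applied to contracting elements.

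\textbf{Reduction to contracting elements.} For each $i \in \{1, \dots, l\}$, the assumption that $G \curvearrowright X_i$ is non-elliptic and non-horocyclic, together with Gromov's classification (Theorem~\ref{Thm: Classification}), supplies a hyperbolic element $h_i \in G$. A quasi-axis of $h_i$ in the Gromov-hyperbolic space $X_i$ is a contracting quasi-geodesic, so each $h_i$ is a contracting element for the action $G \curvearrowright X_i$. The goal reduces to producing a single $g \in G$ that is contracting on every $X_i$ at once; in a Gromov-hyperbolic setting such a $g$ is automatically hyperbolic on every $X_i$.

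\textbf{Simultaneous extension.} I would then invoke a simultaneous extension lemma of the following shape: for sufficiently large exponents $n_1, \dots, n_l$ and appropriate auxiliary elements $\gamma_0, \dots, \gamma_l \in G$, the product
\[
g \;=\; \gamma_0 \, h_1^{n_1} \, \gamma_1 \, h_2^{n_2} \, \gamma_2 \cdots h_l^{n_l} \, \gamma_l
\]
acts hyperbolically on every $X_i$. The verification proceeds one space at a time: projecting an orbit of $g$ onto the quasi-axis $\ax(h_i) \subset X_i$, the factor $h_i^{n_i}$ contributes a long fellow-traveling segment, while the contributions of the remaining factors project to sets of uniformly bounded diameter by a standard Behrstock / bounded-projection estimate in $\delta$-hyperbolic spaces. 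The projected word is therefore a uniform quasi-geodesic, so $g$ has positive translation length on every $X_i$, placing $g$ in $\sh(G)$.

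\textbf{Main obstacle.} The hard part is arranging the transversality needed so that the projections of the letters $\gamma_j h_j^{n_j} \gamma_{j+1}^{-1}$ (for $j \neq i$) onto $\ax(h_i)$ are indeed bounded, and doing this \emph{simultaneously} across all $i$. When $G \curvearrowright X_i$ is of general type, transversality is easy: independent hyperbolic pairs exist, so generic conjugates in $G$ suffice. The subtle cases are the lineal and focal actions: on such an $X_i$ every hyperbolic element of $G$ shares one or two boundary fixed points, so conjugation inside $G$ cannot separate axes on $X_i$. I would treat these by passing to the finite-index stabilizer of the fixed-point set, where each element either translates along a common quasi-axis or has bounded displacement relative to it, and then use explicit translation-length bookkeeping to verify that the $h_i^{n_i}$ factor dominates the other letters for large $n_i$. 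A unified case analysis on the type of each action, together with a consistent choice of the $\gamma_j$, then produces the desired $g \in \sh(G)$.
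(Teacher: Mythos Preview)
Your treatment of the lineal and focal actions contains a genuine gap. The claim that ``the $h_i^{n_i}$ factor dominates the other letters for large $n_i$'' cannot be made to work. On a lineal or focal $X_i$ the other chosen elements $h_j$ may themselves be hyperbolic, translating along the \emph{same} quasi-axis (lineal) or along an axis asymptotic to the common fixed point (focal); their contribution to the displacement of $g$ on $X_i$ then grows linearly in $n_j$, and no choice of conjugators $\gamma_j$ helps, since conjugation preserves translation length. Your dichotomy ``translates along a common quasi-axis or has bounded displacement'' also omits parabolic elements in the focal case. What you actually face on the lineal/focal spaces is a system of constraints of the form $\sum_j n_j\,\tau_i(h_j)\neq 0$ (up to bounded defect) for all such $i$; there is no ordering of the $n_i$ that lets each one ``dominate'' on its own space without circularity.

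The paper handles this by replacing the domination heuristic with Busemann quasimorphisms. After passing to a finite-index subgroup making the lineal actions orientable, each lineal or focal $X_i$ yields a nonzero homogeneous quasimorphism $\beta_i$ with $\beta_i(g)\neq 0$ iff $g$ is hyperbolic on $X_i$ (Proposition~\ref{Prop: BusemannQM}, Lemma~\ref{busemann homo q.m.}). Lemma~\ref{lem: FinQH} then proves, by a short induction using the defects, that finitely many nonzero homogeneous quasimorphisms have a common nonvanishing point; this is the correct replacement for your bookkeeping step. The general-type spaces are handled separately by the simultaneously-contracting machinery (Theorem~\ref{thm: simul contracting elements}, Corollary~\ref{Cor: SH}), and the two regimes are glued via Corollary~\ref{lem: finiteunion q.m.}, which shows that the zero sets $A_i=\beta_i^{-1}(0)$ together with finitely many translates $A_i f^{-1}$ cannot cover $G'$. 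Your projection/Behrstock picture for the general-type part is close in spirit to the paper's Extension Lemma approach, but the lineal/focal part genuinely requires the quasimorphism argument (or an equivalent genericity argument on signed translation lengths) rather than domination.
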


The subsequent result quantifies the proportion of the set  $\sh(G)$ in the whole group. We note that counting problems for discrete groups have a rich historical background. In recent decades, there has been a growing interest in counting problems with respect to the word metric on groups. For related results, see \cite{PS98, CF10a, Wie17, GTT18, Choi25}.

For a group $G$ with a generating set $S$, we denote by $S^{\le n}$ the set of words which can be written as a product of no more than $n$ letters in $S\cup S^{-1}$. 
\begin{theorem}[Corollary~\ref{PosDes sh}]\label{IntroThm: SH pos den}
    Let $G$ be a group acting non-elliptically and non-horocyclically on finitely many Gromov-hyperbolic spaces $X_1,\ldots, X_l$. Suppose $G$ is finitely generated by $S$. Then there exists a constant $c=c(S)\in (0,1)$ such that $$\frac{\sharp  (S^{\le n}\cap \mathcal{SH}(G))}{\sharp S^{\le n}}>c$$ for any sufficiently large $n$.
\end{theorem}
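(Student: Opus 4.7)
The plan is to reduce Theorem~\ref{IntroThm: SH pos den} to the positive density theorem for simultaneously contracting elements mentioned in the abstract as the key ingredient. The crucial link is the classical dichotomy in Gromov-hyperbolic geometry: an isometry of a Gromov-hyperbolic space is hyperbolic if and only if it admits an invariant quasi-axis which is a contracting quasi-geodesic. Consequently, in the Gromov-hyperbolic setting the set $\sc(G)$ of simultaneously contracting elements coincides with $\sh(G)$, so any density statement about $\sc(G)$ transfers directly to $\sh(G)$.

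With this identification, the proof proceeds in three short steps. First, invoke Theorem~\ref{IntroThm: SH} to produce at least one simultaneously hyperbolic element $g_0 \in \sh(G)$; by the equivalence just noted, $g_0$ is also simultaneously contracting, and its quasi-axis on each $X_i$ supplies the required contracting quasi-geodesic. Second, apply the positive density theorem for $\sc(G)$ (valid once each $X_i$ is endowed with a contracting quasi-geodesic, as we have arranged) to conclude that there is a constant $c=c(S)\in(0,1)$ with $\sharp(S^{\le n}\cap \sc(G))/\sharp S^{\le n} > c$ for all sufficiently large $n$. Third, since $\sc(G)=\sh(G)$ in the present setting, the same lower bound holds with $\sh(G)$ in place of $\sc(G)$, which is the desired conclusion.

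The main technical effort is not in this deduction but in the positive density theorem for $\sc(G)$ itself, which is where the real work of the paper lies. That theorem is proved via an Extension Lemma style argument: starting from one simultaneously contracting element, one constructs a finite set $F\subset G$ of extenders such that for every $g\in G$ at least one product $gh$ with $h\in F$ is simultaneously contracting. Positive density is then extracted from the resulting map $g\mapsto gh(g)$ together with the elementary ball-growth bound $\sharp S^{\le n+L}\le \sharp S^{\le L}\cdot \sharp S^{\le n}$, which controls the distortion caused by multiplying by an element of length at most $L=\max_{h\in F}|h|_S$. Once that density theorem is established, the deduction of Theorem~\ref{IntroThm: SH pos den} from Theorem~\ref{IntroThm: SH} is essentially formal, as sketched above.
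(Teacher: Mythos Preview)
There is a genuine gap. Your reduction hinges on applying the positive density theorem for $\sc(G)$ to each $X_i$, but that theorem requires the \emph{contracting property}: the existence of two \emph{weakly-independent} contracting elements on $X_i$. Producing a single simultaneously contracting element $g_0$ via Theorem~\ref{IntroThm: SH} does not supply this. For a lineal action any two hyperbolic elements share both limit points, and for a focal action they share the global fixed point $\xi\in\partial X_i$; in either case the two orbits fellow-travel along a ray to the common boundary point, so $N_r(\langle g\rangle o)\cap N_r(\langle h\rangle o)$ has infinite diameter and the elements are never weakly-independent. Thus the contracting property fails precisely on the lineal and focal factors, and the $\sc$ density theorem simply does not apply to them. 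Your argument is valid only in the special case where every action $G\curvearrowright X_i$ is of general type.

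The paper's proof confronts exactly this obstruction. It splits the index set into the general-type factors $X_1,\dots,X_m$ (handled, as you anticipate, by an Extension Lemma / SC construction leading to a finite extender set) and the lineal/focal factors $X_{m+1},\dots,X_l$. For the latter it passes to a finite-index subgroup on which each lineal action is orientable, invokes the Busemann homogeneous quasimorphisms $\beta_{m+1},\dots,\beta_l$ of Caprace--Cornulier--Monod--Tessera, and uses a pigeonhole argument on the values $\beta_j(gf_i^p)$ to guarantee that among a bounded list of powers $f_i,\dots,f_i^{l-m+1}$ at least one makes $gf_i^{M_i}$ hyperbolic on every lineal/focal factor. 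Merging this with the SC construction on the general-type factors yields a single finite set $F$ such that for every $g$ some $gf\in\sh(G)$; positive density then follows from the ball-growth argument you describe. So the Extension-Lemma-plus-counting skeleton you outline is correct, but the quasimorphism input for lineal/focal actions is an essential missing ingredient, not a technicality.
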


\begin{remark}
    The main result in \cite{Wie17} obtained by Wiest states that under some additional conditions, the proportion of hyperbolic elements in a group acting non-elliptically and non-horocyclically on a Gromov-hyperbolic space has a positive lower bound with respect to any proper word metric. By taking $l=1$, Theorem \ref{IntroThm: SH pos den} generalizes the main result in \cite{Wie17}.
\end{remark}

The most important ingredient in proofs of Theorems \ref{IntroThm: SH}, \ref{IntroThm: SH pos den} is the following result, which generalizes \cite[Theorem H]{BF24} in the direction that the spaces do not need to be Gromov-hyperbolic. 

\begin{theorem}[Theorem~\ref{thm: simul contracting elements} + Corollary~\ref{PosDes sc}]\label{IntroThm: SC}
    Suppose that a group $G$ acts isometrically on finitely many geodesic metric spaces $X_1,\ldots, X_l$ with contracting property, respectively. Then the set $$\mathcal{SC}(G)=\{g\in G: g \text{ is simultaneously contracting on each } X_i \text{ for } 1\le i\le l\}$$ is not empty and contains an infinite independent subset.

    Moreover, if $G$ is finitely generated by $S$, then there exists a constant $c=c(S)\in (0,1)$ such that $$\frac{\sharp  (S^{\le n}\cap \mathcal{SC}(G))}{\sharp S^{\le n}}>c$$ for any sufficiently large $n$.
\end{theorem}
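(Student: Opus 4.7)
My plan is to bootstrap the one-space Extension Lemma, as used in Wan--Xu--Yang for individual contracting actions, into a simultaneous Extension Lemma across all $l$ spaces; both the existence and the density statements will then follow from this device. The contracting property of the action $G\curvearrowright X_i$ supplies a finite set $E_i\subset G$ such that for every $g\in G$, at least one of the elements $ge$ with $e\in E_i$ is contracting on $X_i$. The first goal is to combine these into a single finite set $E\subset G$ producing \emph{simultaneously} contracting elements: for any $g\in G$, some $g\cdot e$ with $e\in E$ lies in $\sc(G)$.

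For existence I would proceed by induction on $l$. The base case $l=1$ is immediate from the hypothesis. For the inductive step, assume $g$ lies in $\sc$ for $X_1,\ldots,X_{l-1}$; in particular every power $g^N$ does as well. Applying the one-space Extension Lemma on $X_l$ to $g^N$, choose $e\in E_l$ such that $g^N e$ is contracting on $X_l$. The heart of the argument is a stability claim: for $N$ sufficiently large, $g^N e$ remains contracting on each $X_i$ with $i<l$, because the long $g$-axis on $X_i$ absorbs the bounded perturbation $e$ via admissible-path concatenation and the bounded geodesic image property established in \cite{WXY24}. To produce an infinite independent subset, I would iterate the same construction, replacing $g$ at each stage by a conjugate $h_k g h_k^{-1}$ with $\{h_k\}$ chosen from a generic sequence so that the resulting axes on every $X_i$ have pairwise bounded overlap, hence generate a quasi-free subgroup.

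For the density statement, the simultaneous Extension Lemma above produces a finite set $F\subset G$ and a uniform length bound $L=\max_{f\in F}|f|_S$ such that for every $g\in G$ there exists $f(g)\in F$ with $g\cdot f(g)\in \sc(G)$. Define $\Psi\colon S^{\le n}\to S^{\le n+L}\cap \sc(G)$ by $\Psi(g)=g\cdot f(g)$. Each fiber of $\Psi$ has cardinality at most $\sharp F$, so $\sharp(S^{\le n+L}\cap\sc(G))\ge (\sharp F)^{-1}\sharp S^{\le n}$. Since $\sharp S^{\le n+L}\le(2\sharp S+1)^{L}\cdot \sharp S^{\le n}$ by the crude ball-growth estimate, reindexing $n\mapsto n-L$ yields the desired positive lower bound $c=c(S)\in(0,1)$ on $\sharp(S^{\le n}\cap\sc(G))/\sharp S^{\le n}$ for all sufficiently large $n$.

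The main technical obstacle is the stability statement at the core of the inductive existence step: high powers of a contracting element on a general (non-hyperbolic) metric space must remain contracting after a bounded perturbation, and this must hold simultaneously across all previously handled $X_i$. In the Gromov-hyperbolic setting this is folklore via Morse-type arguments, but for general contracting actions one has to thread the admissible-path machinery of \cite{WXY24} and verify that the quantitative constants (contracting radius, fellow-traveller thresholds, exponents in the Extension Lemma) propagate uniformly through the induction on $l$. A secondary subtlety, relevant to the independent subset, is avoiding the situation where the iteratively constructed elements lie in a common elementary subgroup on some $X_i$; handling this requires the conjugators $h_k$ to be chosen from outside a finite union of maximal elementary subgroups on the various $X_i$, which is possible precisely because each such subgroup has infinite index.
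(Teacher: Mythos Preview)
Your inductive scheme hinges entirely on the ``stability claim'': if $g$ is contracting on $X_i$ and $e\in G$ is a fixed element, then $g^N e$ is contracting on $X_i$ for all large $N$. This claim is false, and the failure occurs already in the simplest general-type examples. Take $G=\mathbb{Z}/2*\mathbb{Z}/2*\mathbb{Z}/2=\langle a,b,c\mid a^2=b^2=c^2=1\rangle$ acting on its Bass--Serre tree; this action is of general type (for instance $ab$ and $bc$ are independent hyperbolic elements), so it has the contracting property. Let $g=ab$, which is hyperbolic, and $e=a$. Then $g^Ne=(ab)^Na$ is the alternating word $abab\cdots ba$ of odd length $2N+1$, which is a palindrome in involutive generators and hence its own inverse. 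Thus $(g^Ne)^2=1$ for every $N$, so $g^Ne$ is elliptic for all $N$. The phrase ``the long $g$-axis absorbs the bounded perturbation $e$'' is exactly what goes wrong: $e$ can flip the axis of $g$, and no power of $g$ undoes this. Since the element $e$ supplied by the one-space Extension Lemma on $X_l$ is completely uncontrolled on the other $X_i$, nothing prevents this phenomenon, and your induction stalls at the very first step.

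The paper confronts precisely this obstruction, and its workaround is what makes the argument delicate. Rather than perturbing a single element $g\in\sc(G;\{1,\dots,l-1\})$, the induction hypothesis is used twice, to produce a large independent set $F\subset\sc(G;\{1,\dots,l-1\})$ and a large independent set $T\subset\sc(G;\{2,\dots,l\})$. One then seeks a product $f_ig_j$ (or a slightly longer word in elements of $F\cup T$) that is simultaneously contracting. The Extension Lemma is applied coordinate by coordinate, each application discarding at most two candidates from a pool of $s$; a pigeonhole count over the $s^2$ available products shows that some product survives on all coordinates. A further case analysis (Lemmas~\ref{f>g>}--\ref{f<g>}) is needed because the displacements $d(o_1,g_jo_1)$ and $d(o_l,f_io_l)$ may be small, so the Extension Lemma cannot be invoked directly; this is handled by pre-multiplying by carefully chosen powers to force large displacement, and the bookkeeping is what occupies most of Section~\ref{subsec: SC}. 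Your density argument via the map $\Psi$ is fine and essentially equivalent to the paper's Lemma~\ref{lem: PosDes}, but it presupposes the simultaneous Extension Lemma (the paper's Lemma~\ref{generalized extension lemma}), whose proof again requires the combinatorial machinery above rather than the stability shortcut.
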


Roughly speaking, a contracting element is a ``hyperbolic-like'' element in a general metric space and an isometric group action has contracting property if there exist at least two weakly-independent contracting elements. We refer the readers to \cite{WXY24} or Subsection \ref{subsec: contracting} for more details about contracting elements and contracting property. 

\begin{remark}
\label{rem: not generic}
\begin{enumerate}
    \item In \cite{WXY24}, Wan-Xu-Yang obtained a similar result in the case $l=2$. Thus, Theorem \ref{IntroThm: SC} is also a generalization of \cite[Proposition 4.2]{WXY24}.
    \item We remark that in general, $\sh(G)$ and $\sc(G)$ are not generic, i.e. the constant $c$ in Theorems~\ref{IntroThm: SH pos den}, \ref{IntroThm: SC} can not be arbitrarily close to $1$. See Example~\ref{Exa: DesGAP} for a counterexample.
\end{enumerate}
    
\end{remark}

\paragraph{\textbf{Structure of the paper}} 
The paper is organized as follows. In Section~\ref{sec: preliminary}, we first recall some preliminary materials about Gromov-hyperbolic spaces, Busemann quasimorphisms and contracting properties. In Subsection \ref{subsec: SC}, we utilize some combinatorial techniques to generalize the useful Extension Lemma developed by Yang in \cite{Yan19} and then prove the existence of simultaneously contracting elements. In Subsection \ref{subsec: positive density sc}, we follow the approach of Cumplido-Wiest in \cite{CW18} to prove the positive density of simultaneously contracting elements, which completes the proof of Theorem \ref{IntroThm: SC}. In Section~\ref{sec: SH}, we utilize Theorem \ref{IntroThm: SC} and Busemann quasimorphisms developed in \cite{CCMT15} to prove Theorem~\ref{IntroThm: SH}. The positive density of simultaneously hyperbolic elements, i.e. Theorem \ref{IntroThm: SH pos den} is obtained in a similar way.

\subsection*{Acknowledgments} 
We are grateful to Prof. Wenyuan Yang for many helpful suggestions on the first draft. R. W. is supported by NSFC No.12471065 \& 12326601 and in part by Science and Technology Commission of Shanghai Municipality (No. 22DZ2229014). 

\section{Preliminaries}\label{sec: preliminary}

\subsection{Gromov-hyperbolic spaces}\label{Subsec: HypSpace}

%For any two points $x,y$ in a geodesic metric space, we always denote by $[x,y]$ a choice of geodesic from $x$ to $y$. 

For two metric spaces $X$ and $Y$, a map $\phi: X\to Y$ is  a \textit{quasi-isometric embedding} if there exist two constants $K\ge 1,\varepsilon\ge 0$ such that
$$\frac{1}{K}d_X(x_1,x_2)-\varepsilon\le d_Y(\phi(x_1),\phi(x_2))\le Kd_X(x_1,x_2)+\varepsilon$$
for any $x_1,x_2\in X$. When $K=1,\varepsilon=0$, the above inequality becomes an equality and gives the definition of an isometric embedding. The image of a (quasi-)isometric embedding from a (possibly bounded or unbounded) interval $I$ to a metric space $X$ is a \textit{(quasi-)geodesic} in $X$.
\iffalse
A \textit{path} in a metric space $X$ is the image of a continuous map $\alpha: I\to X$ from a (possibly bounded or unbounded) interval $I$ to $X$. If $\alpha$ is a quasi-isometric embedding, then the path is a \textit{quasi-geodesic}. Moreover, if $I=[0,+\infty)$, then the path is a \textit{quasi-geodesic ray}. 
%If there is no ambiguity, we also use the map to denote a path. 
A metric space $X$ is \textit{quasi-geodesic} if any two points in $X$ can be connected by a quasi-geodesic.
\fi

A metric space $X$ is called \textit{geodesic} if any two points in $X$ can be connected by a geodesic. A geodesic metric space $X$ is called \textit{$\delta$-hyperbolic} for some $\delta\ge 0$ if any geodesic triangle in $X$ is \textit{$\delta$-thin}: any side of the triangle is contained in the $\delta$-neighborhood of the union of the other two sides. See Figure~\ref{fig:slim} for an illustration. A \textit{Gromov-hyperbolic space} is a $\delta$-hyperbolic space for some $\delta\geq0$. %It is well-known that Gromov-hyperbolicity is a quasi-isometry invariant for geodesic metric spaces. 

\begin{figure}[h]
    \centering
    \begin{tikzpicture}[x=0.75pt,y=0.75pt,yscale=-1,xscale=1,scale=0.5]
    \draw  [fill={rgb, 255:red, 208; green, 2; blue, 27 }  ,fill opacity=1 ,opacity=0.5] (225,372) .. controls (273,356) and (371,352) .. (421,370) .. controls (471,388) and (527,414) .. (516,451) .. controls (505,488) and (467,486) .. (413,463) .. controls (359,440) and (275,457) .. (227,475) .. controls (179,493) and (151,481) .. (144,447) .. controls (137,413) and (177,388) .. (225,372) -- cycle ;
    \draw  [fill={rgb, 255:red, 155; green, 155; blue, 155 }  ,fill opacity=1 , opacity=0.5] (381,254) .. controls (392,311) and (441,367) .. (487,384) .. controls (533,401) and (541,439) .. (518,470) .. controls (495,501) and (458,497) .. (404,474) .. controls (350,451) and (272,361) .. (261,316) .. controls (250,271) and (247,197) .. (291,190) .. controls (335,183) and (370,197) .. (381,254) -- cycle ;
    \draw    (189,433) .. controls (314,383) and (324,369) .. (321,244) ;
    \draw    (470,442) .. controls (355,394) and (325,374) .. (321,244) ;
    \draw    (189,433) .. controls (343,373) and (337,393) .. (470,442) ;
    \node[xshift=-0.5em] at (189,433) {$y$};
    \node[yshift=0.5em] at (321,244) {$x$};
    \node[xshift=0.5em] at (470,442) {$z$};
    \end{tikzpicture}
    \caption{$[x,y]\subset N_\delta([y,z]\cup[z,x])$.}
    \label{fig:slim}
\end{figure}
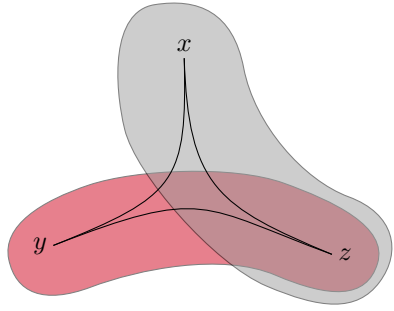

Now we recall the definition of the Gromov boundary of a Gromov-hyperbolic space. Let $X$ be a geodesic metric space. Fix a base point $x_0\in X$. Given $x,y\in X$, we define the \textit{Gromov product}
$$(x\cdot y)_{x_0}\coloneqq \frac{1}{2}\left(d\left(x_0,x\right)+d\left(x_0,y\right)-d(x,y)\right).$$

\begin{defn}\cite[Chapter H, Definition 3.12]{BH99}
Let $X$ be a  $\delta$-hyperbolic space with a basepoint $x_0$. A sequence $(x_n)=(x_n)_{n=1}^{\infty}$ in $X$ \textit{converges at infinity} if $(x_i,x_j)_{x_0}\to \infty$ as $i,j\to \infty$.
The Gromov boundary $\partial X$ is defined as the following set, equipped with an appropriate metrizable topology,
$$\partial X:=\{(x_n) \subset X : (x_n)\text{ converges at infinity}\}/\sim$$
where $(x_n)\sim (y_n)$ if $(x_i,y_j)_{x_0}\to\infty$ as $i,j\to \infty$. 
\end{defn}

Let $G$ be a group acting by isometries on a Gromov-hyperbolic space $X$. 
The \textit{limit set} of this action is defined by
$$\Lambda(G)\coloneqq\left\{ [(x_n)]\in \partial X \mid \text{there exists } \{g_n\}\subset G \text{ such that } x_n=g_n\cdot x_0\right\}.$$

Moreover, an isometric action $G\curvearrowright X$ induces an action of $G$ on $\partial X$ by homeomorphisms, given by $g\cdot[(x_n)]\coloneqq[(g\cdot x_n)]$. It is also well-known that $\Lambda(G)\subset \partial X$ is a $G$-invariant closed subset and the cardinality of $\Lambda(G)$ is 0, 1, 2 or $\infty$.

\begin{defn}
    Suppose $G$ acts isometrically on a Gromov-hyperbolic space $X$. Let $g\in G$. If the orbit of $\langle g\rangle$ in $X$ is bounded, then $g$ is called \textit{elliptic}. Otherwise $g$ is called \textit{hyperbolic} (resp. \textit{parabolic}) if $g$ has exactly two (resp. one) fixed points on $\partial X$.
\end{defn}

According to Gromov \cite{Gro87}, all isometric actions on a Gromov-hyperbolic space can be divided into the following five types. %Following the tradition, we use $\Lambda (G)$ to denote the limit set of a group $G$.
\begin{theorem}\cite[Theorem 4.2]{ABO19}\label{Thm: Classification}
    Let $G$ be a group acting isometrically on a Gromov-hyperbolic space $X$. Then exactly one of the following conditions holds. 
    \begin{enumerate}
        \item $\abs{\Lambda(G)}=0$. Equivalently, $G$ has bounded orbits. In this case the action of $G$ is called {\rm elliptic}.
        \item $\abs{\Lambda(G)}=1$. Equivalently, $G$ has unbounded orbits and contains no hyperbolic elements. In this case the action of $G$ is called {\rm parabolic} or {\rm horocyclic}. A parabolic action cannot be cobounded and the set of points of $\partial X$ fixed by $G$ coincides with $\Lambda(G)$.
        \item $\abs{\Lambda(G)}=2$. Equivalently, $G$ contains a hyperbolic element and any two hyperbolic elements have the same limit points on $\partial X$. In this case the action of $G$ is called {\rm lineal}.
        \item $\abs{\Lambda(G)}=\infty$. Then $G$ always contains hyperbolic elements. In turn, this case breaks into two subcases.\begin{enumerate}
            \item $G$ fixes a point of $\partial X$. Equivalently, any two hyperbolic elements of $G$ have a common limit point on the boundary. In this case the action of $G$ is called {\rm quasi-parabolic} or {\rm focal}. Orbits of quasi-parabolic actions are always quasi-convex.
            \item $G$ does not fix any point of $\partial X$. Equivalently, $G$ contains infinitely many independent hyperbolic elements. In this case the action of $G$ is said to be {\rm of\ general\ type}.
        \end{enumerate}
    \end{enumerate}
\end{theorem}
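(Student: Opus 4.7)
The plan is to split the argument by the cardinality $|\Lambda(G)| \in \{0,1,2,\infty\}$ and verify each listed equivalence, using throughout the $G$-invariance and closedness of $\Lambda(G)$ in $\partial X$ together with the facts that a hyperbolic isometry contributes exactly two points to $\Lambda(G)$ (its two quasi-axis endpoints) while a parabolic isometry contributes exactly one. The first task is to rule out finite cardinalities at least $3$: any finite $G$-invariant subset of $\partial X$ must be permuted by every hyperbolic element of $G$, but a standard ping-pong argument applied to any pair of hyperbolic elements with disjoint fixed-point sets produces arbitrarily many new endpoint pairs lying in $\Lambda(G)$, contradicting finiteness.

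The cases $|\Lambda(G)| = 0, 1, 2$ are then handled directly. Emptiness of $\Lambda(G)$ is equivalent to boundedness of all $G$-orbits, by a diagonal extraction against the Gromov-product definition of $\partial X$. If $|\Lambda(G)| = 1$, the absence of hyperbolic elements is immediate (otherwise two endpoints would appear) and the unique limit point is a global fixed point by equivariance; the classical trichotomy (elliptic, parabolic, hyperbolic) of individual isometries on a hyperbolic space then places the action in the purely parabolic regime. Non-coboundedness in this case follows from an obstruction via Busemann functions based at the fixed boundary point: a cobounded action would force this Busemann function to be both coarsely surjective on $X$ and uniformly bounded along any single orbit, which is incompatible. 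If $|\Lambda(G)| = 2$, both points must be endpoints of the quasi-axis of any hyperbolic element, so all hyperbolic elements share the same pair of fixed points up to swap.

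For $|\Lambda(G)| = \infty$, the existence of a hyperbolic element is forced because the previously handled cases cover every situation where all elements are elliptic or parabolic. The bifurcation between quasi-parabolic and general type is then precisely whether $G$ fixes a point of $\partial X$. If no such global fixed point exists, starting from a hyperbolic $g$ with endpoints $\{g^+, g^-\}$ one picks $h \in G$ that does not preserve the set $\{g^+, g^-\}$ and, after replacing $g$ and $hgh^{-1}$ by sufficiently high powers, obtains north-south dynamics on pairwise disjoint neighborhoods in $X \cup \partial X$. Ping-pong then yields a free subgroup whose non-trivial elements are pairwise independent hyperbolic isometries, producing infinitely many distinct endpoint pairs. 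The main obstacle will be this quantitative ping-pong step: converting the topological condition ``no common endpoint'' into genuine dynamical separation compatible with the hyperbolicity constant $\delta$, which is carried out by choosing attracting and repelling neighborhoods of size calibrated via Gromov products. The remaining quasi-convexity claim for quasi-parabolic orbits is a consequence of a Busemann-quasimorphism computation relative to the global boundary fixed point, which coincides with the technology the paper invokes from \cite{CCMT15}.
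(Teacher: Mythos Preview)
The paper does not prove this statement at all: Theorem~\ref{Thm: Classification} is quoted verbatim as a background result, attributed to Gromov \cite{Gro87} and cited from \cite[Theorem~4.2]{ABO19}, with no argument supplied. So there is no ``paper's own proof'' to compare your proposal against.

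That said, your outline is a reasonable sketch of the standard proof of Gromov's classification, and nothing in it is wrong in spirit. A couple of points are thinner than the rest. First, your argument ruling out $3 \le |\Lambda(G)| < \infty$ presupposes the existence of two hyperbolic elements with disjoint fixed-point sets, which is exactly what you are trying to establish later in the general-type case; the cleaner route is to note that if $|\Lambda(G)| \ge 3$ then $\Lambda(G)$ is perfect (being the closure of the set of hyperbolic fixed points once you know one hyperbolic element exists, combined with minimality of the $G$-action on $\Lambda(G)$), hence infinite. Second, the non-coboundedness of horocyclic actions and the quasi-convexity of focal orbits are genuine lemmas requiring more than a sentence each; you correctly flag them as relying on Busemann-function technology, but in a self-contained write-up these would need to be fleshed out or cited precisely. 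Since the present paper only \emph{uses} the classification and never proves it, none of this matters for the paper itself.
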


\subsection{Busemann quasimorphisms}

\begin{defn}\label{Def: QM}
    For a group $G$, a map $\varphi\colon G\rightarrow\mathbb{R}$ is a \textit{quasimorphism} if the \textit{defect} of $\varphi$ defined by
    $$\Delta(\varphi):=\sup_{g,h\in G}|\varphi(gh)-\varphi(g)-\varphi(h)|$$ is finite.

    A quasimorphism is \textit{homogeneous} if it satisfies the additional property
    $$\varphi\left(g^n\right)=n\varphi(g)$$
    for all $g\in G$ and $n\in\mathbb{Z}$.
\end{defn}

\iffalse
According to \cite[Lemma 2.21]{Cal09}, each quasimorphism has a homogenization which is defined as follows.
\begin{lemma}[Homogenization]
\label{construct homo.q.m.}
    Let $\varphi$ be a quasimorphism on $G$. For each $g\in G$, define
    $$\overline{\varphi}(g)\coloneqq\lim_{n\rightarrow\infty}\frac{\varphi\left(g^n\right)}{n}.$$
    The limit exists, and defines a homogeneous quasimorphism. Moreover, for any $g\in G$ there is an estimate $\abs{\overline{\varphi}(g)-\varphi(g)}\leq \Delta(\varphi)$, where $\Delta(\varphi)$ is the defect of $\varphi$.
\end{lemma}

%Homogeneous quasimorphisms are often easier to work with than ordinary quasimorphisms, but ordinary quasimorphisms are easier to construct.
\fi

Let $X$ be a Gromov-hyperbolic space and $\xi\in\partial X$. In \cite{CCMT15},  Caprace-Cornulier-Monod-Tessera defined a \textit{horokernel} based at $\xi$ to be any accumulation point (in the topology of pointwise convergence) of a sequence of functions
$$X\times X\rightarrow\mathbb{R},\ \ \ \ (x,y)\mapsto d\left(x,x_n\right)-d\left(y,x_n\right),$$
where $\{x_n\}$ is any sequence in $X$ converging to $\xi$. By the Tychonoff theorem, the collection $\mathcal{H}_\xi$ of all horokernels based at $\xi$ is nonempty. 

\begin{proposition}\cite[Proposition~3.7]{CCMT15}\label{Prop: BusemannQM}
    Let $G$ be a group acting by isometries on $X$. Let $\xi\in\partial X$, $h\in\mathcal{H}_\xi$ and $x\in X$. Then the function
$$\beta_\xi\colon \stab_G(\xi)\rightarrow\mathbb{R},\ \ \ \ \beta_\xi(g)=\lim_{n\rightarrow\infty}\frac{1}{n}h\left(x,g^nx\right),$$
is a well-defined homogeneous quasimorphism\footnote{In \cite{CCMT15}, Caprace-Cornulier-Monod-Tessera uses \textit{quasi-characters} to stand for quasimorphisms and \textit{characters} to stand for homomorphisms. For the sake of consistency, we unify these terminologies.}, called {\rm Busemann quasimorphism} of $\stab_G(\xi)$, and is independent of $h\in\mathcal{H}_\xi$ and of $x\in X$. 
\end{proposition}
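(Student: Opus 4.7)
The plan is to reduce the statement to the standard homogenization procedure for quasimorphisms applied to the naive cochain $c(g) := h(x, gx)$ on $\stab_G(\xi)$. First I would record three properties of any $h \in \mathcal{H}_\xi$ which pass to pointwise limits directly from the approximating cocycles $(y,z) \mapsto d(y, x_n) - d(z, x_n)$: the exact cocycle identity $h(y,z) + h(z,w) = h(y,w)$, antisymmetry $h(y,z) = -h(z,y)$, and the Lipschitz bound $|h(y,z)| \le d(y,z)$. None of these requires hyperbolicity, but they give me a cocycle with which to play.

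The place where $\delta$-hyperbolicity enters is the single fact I would cite directly from \cite{CCMT15}: any two horokernels based at the same point of $\partial X$ differ by a uniformly bounded function, say by a constant $D$. For any $g \in \stab_G(\xi)$, observe that $(y, z) \mapsto h(gy, gz)$ is itself a horokernel, based at $g^{-1} \xi = \xi$ (it is obtained as the accumulation point of the translated sequence $g^{-1} x_n$). Hence
$$|h(y, z) - h(gy, gz)| \le D \quad \text{for all } y, z \in X,\ g \in \stab_G(\xi).$$

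Combining the cocycle identity $h(x, ghx) = h(x, gx) + h(gx, ghx)$ with the bounded-invariance applied to the pair $(x, hx)$ gives $|c(gh) - c(g) - c(h)| = |h(gx, ghx) - h(x, hx)| \le D$, so $c$ is a quasimorphism of defect at most $D$. The standard homogenization of $c$ (\emph{viz.} \cite[Lemma 2.21]{Cal09}) is then precisely $\beta_\xi(g) = \lim_{n\to\infty} c(g^n)/n = \lim_{n\to\infty} h(x, g^n x)/n$, which therefore exists and is a homogeneous quasimorphism.

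Independence of $h$ and $x$ is then immediate. Switching to another $h' \in \mathcal{H}_\xi$ changes $h(x, g^n x)$ by at most the bounded-difference constant, which vanishes after dividing by $n$. Replacing $x$ by $x'$, the cocycle identity yields
$$h(x', g^n x') = h(x', x) + h(x, g^n x) + h(g^n x, g^n x'),$$
and the first and third terms are uniformly bounded --- the third by $|h(x, x')| + D$ via the bounded-invariance applied to $g^{-n} \in \stab_G(\xi)$ --- so they disappear upon dividing by $n$. The only non-trivial ingredient is the bounded-difference property of horokernels at a common boundary point, imported from \cite{CCMT15}; everything else is a direct computation with the cocycle identity.
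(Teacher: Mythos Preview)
The paper does not give its own proof of this proposition; it is quoted verbatim from \cite[Proposition~3.7]{CCMT15} and used as a black box. So there is no ``paper's proof'' to compare against.

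That said, your argument is correct and is essentially the standard one (and close to what is done in \cite{CCMT15}). The only substantive input is the fact you import from \cite{CCMT15}: that in a $\delta$-hyperbolic space any two horokernels based at the same boundary point differ by a constant depending only on $\delta$. Once that is in hand, your observation that $(y,z)\mapsto h(gy,gz)$ is again a horokernel at $\xi$ whenever $g\in\stab_G(\xi)$ gives the approximate $G$-invariance $|h(y,z)-h(gy,gz)|\le D$; the cocycle identity then shows $c(g)=h(x,gx)$ is a quasimorphism of defect $\le D$, and homogenization finishes. Your checks of independence of $h$ and of $x$ are also correct. One small remark: you should note explicitly that the constant $D$ is uniform in $g$ (it depends only on $\delta$), since you apply the bound to $g^n$ and $g^{-n}$ when verifying independence of $x$; this is indeed the case, but it is worth saying.
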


\begin{remark}
    Actually, \cite[Proposition~3.7]{CCMT15} requires the local compactness of $G$ to get the continuous Busemann quasimorphisms. Since we do not need the Busemann quasimorphism to be continuous in this paper, we drop the local compactness condition.
\end{remark}

The following lemma is a special case of \cite[Lemma~3.8]{CCMT15}.

\begin{lemma}\label{busemann homo q.m.}
    Let $G$ be a group acting isometrically on $X$ and fixing the boundary point $\xi\in \partial X$. Denote by $\beta_\xi$ the corresponding Busemann quasimorphism on $G$. Then $\beta_\xi(g)\ne0$ if and only if $g$ is a hyperbolic isometry of $G\curvearrowright X$.
\end{lemma}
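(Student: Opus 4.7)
The plan is to prove the two implications separately, with the forward direction being the easier one.

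For the forward direction, the key observation is that any horokernel $h \in \mathcal{H}_\xi$ satisfies the elementary bound $|h(x,y)| \leq d(x,y)$, which follows by passing to the limit in the triangle inequality $|d(x,x_n) - d(y,x_n)| \leq d(x,y)$. Plugging this into the definition of $\beta_\xi$ from Proposition~\ref{Prop: BusemannQM}, I obtain
$$|\beta_\xi(g)| \;=\; \lim_{n \to \infty}\frac{|h(x, g^n x)|}{n} \;\leq\; \lim_{n \to \infty}\frac{d(x, g^n x)}{n}.$$
The right-hand side is the stable translation length of $g$, which vanishes whenever $g$ is elliptic (bounded orbit) or parabolic (a classical fact about parabolic isometries of Gromov-hyperbolic spaces, whose orbits approach the unique fixed boundary point at sublinear rate). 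Hence $\beta_\xi(g) \neq 0$ forces $g$ to be hyperbolic.

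For the reverse direction, suppose $g$ is hyperbolic and fixes $\xi$. Then $\xi$ must coincide with one of the two endpoints of the quasi-axis of $g$; after replacing $g$ with $g^{-1}$ if necessary, I may assume $\xi$ is the attracting fixed point, so that $g^n x \to \xi$. In particular, the pointwise limit of $d(\cdot, g^n x) - d(x, g^n x)$ along a suitable subsequence defines a horokernel $h \in \mathcal{H}_\xi$, to which Proposition~\ref{Prop: BusemannQM} guarantees we may apply the definition of $\beta_\xi$. For any fixed $m$, $G$-invariance of the metric gives
$$h(x, g^m x) \;=\; \lim_{n \to \infty}\bigl(d(x, g^n x) - d(x, g^{n-m} x)\bigr).$$
Writing $\tau(g) := \lim_{k \to \infty} d(x, g^k x)/k > 0$ for the stable translation length of the hyperbolic element $g$, the standard fact that $d(x, g^k x) = k\tau(g) + O(1)$ implies the right-hand side equals $m\tau(g) + O(1)$. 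Dividing by $m$ and sending $m \to \infty$ yields $\beta_\xi(g) = \tau(g) > 0$.

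The main technical obstacle is the additive estimate $d(x, g^k x) = k\tau(g) + O(1)$ invoked in the reverse direction, whose error depends only on $\delta$ and on the distance from $x$ to a quasi-axis of $g$. This is standard in hyperbolic geometry and can be established by a thin-triangle argument bounding the Gromov products $(g^a x \cdot g^c x)_{g^b x}$ for $a < b < c$ along the orbit. Granted this, the two implications combine to give the lemma.
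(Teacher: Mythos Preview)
Your proof is correct. The forward direction is clean: the bound $|h(x,y)|\le d(x,y)$ immediately gives $|\beta_\xi(g)|\le \tau(g)$, and the vanishing of stable translation length for elliptic and parabolic isometries is standard. The reverse direction is also fine: choosing $h$ as an accumulation point along the orbit $(g^n x)$ and using $d(x,g^k x)=k\tau(g)+O(1)$ yields $\beta_\xi(g)=\pm\tau(g)\ne 0$. One small point worth making explicit: when you ``replace $g$ by $g^{-1}$'' you are really computing $\beta_\xi(g^{-1})$, and then invoking homogeneity $\beta_\xi(g)=-\beta_\xi(g^{-1})$ to conclude; this is implicit in your write-up but should perhaps be said.

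As for comparison: the paper does not actually prove this lemma. It simply records it as ``a special case of \cite[Lemma~3.8]{CCMT15}'' and moves on. So you have supplied a direct argument where the paper only gives a citation. Your approach is essentially the natural one and is morally the same computation that underlies the cited result in \cite{CCMT15}; the advantage of writing it out, as you have, is that it makes transparent the identity $|\beta_\xi(g)|=\tau(g)$ for hyperbolic $g$ fixing $\xi$, which is slightly more than the bare ``nonzero iff hyperbolic'' statement. The only cost is that you must invoke the additive estimate $d(x,g^k x)=k\tau(g)+O(1)$, which (as you note) is a standard consequence of the Morse property for quasi-axes in hyperbolic spaces.
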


Recall that a group action of $G$ on a Gromov-hyperbolic space $X$ is lineal if $|\Lambda(G)|=2$. For a lineal action $G\curvearrowright X$, we say it is \textit{orientable} if no element of $G$ permutes the two limit points of $\Lambda (G)$. %By the classical Morse Lemma, the orbit of a lineal action is a quasi-line which is a metric space quasi-isometric to $\R$ with the standard metric. 
Hence, Lemma~\ref{busemann homo q.m.} applies to orientable lineal and focal actions, since there exists a global fixed point in the boundary $\partial X$ for both kinds of actions.

\subsection{Contracting elements}\label{subsec: contracting}
Let $(X,d)$ be a metric space. For a subset $Z\subset X$, we denote by $\diam(Z)$ the diameter of $Z$ and $N_C(Z)$ the closed $C$-neighborhood of $Z$ for a constant $C\ge 0$.
For a closed subset $Y\subset X$, we define the \textit{closest point projection} $\pi_Y: X\to Y$ as $$\forall x\in X, \quad \pi_Y(x):=\{y\in Y\mid d(x,y)=d(x,Y)\}.$$
For a subset $B\subset X$, $\pi_Y(B):=\bigcup_{x\in B}\pi_Y(x)$.

\begin{defn}[Contracting Subset]\label{DEF: Contracting Subset}
Let $(X,d)$ be a geodesic metric space. A closed subset $Y\subseteq X$ is called $C$-\textit{contracting} for $C\geq 0$ if for any geodesic (segment) $\alpha$ in $X$ with $\alpha\cap N_C(Y)=\emptyset$, we have $\diam(\pi_Y(\alpha))\leq C$. 
$Y$ is called a \textit{contracting subset} if there exists $C\geq 0$ such that $Y$ is $C$-contracting.
\end{defn}

See Figure \ref{fig: C-contracting} for an illustration of a $C$-contracting set.

\begin{figure}[h]
    \centering
    
\tikzset{every picture/.style={line width=0.75pt}} %set default line width to 0.75pt        

\begin{tikzpicture}[x=0.75pt,y=0.75pt,yscale=-1,xscale=1,scale=0.8]
%uncomment if require: \path (0,300); %set diagram left start at 0, and has height of 300

%Curve Lines [id:da9832457614979742] 
\draw    (149.33,225) .. controls (189.33,195) and (215.33,162) .. (322.33,158) .. controls (429.33,154) and (503.33,202) .. (533.33,232) ;
%Shape: Polygon Curved [id:ds4963439389887342] 
\draw  [fill={rgb, 255:red, 155; green, 155; blue, 155 }  ,fill opacity=0.5 ] (514.33,161) .. controls (541.33,176) and (590.33,238) .. (570.33,258) .. controls (550.33,278) and (520.33,274) .. (494.33,254) .. controls (468.33,234) and (380.33,201) .. (319.33,209) .. controls (258.33,217) and (248.33,217) .. (216.33,240) .. controls (184.33,263) and (119.33,273) .. (114.33,236) .. controls (109.33,199) and (166.33,158) .. (192.33,143) .. controls (218.33,128) and (233.33,116) .. (323.33,116) .. controls (413.33,116) and (487.33,146) .. (514.33,161) -- cycle ;
%Curve Lines [id:da11600261139088486] 
\draw [color={rgb, 255:red, 80; green, 227; blue, 194 }  ,draw opacity=1 ]   (182.33,56) .. controls (206.33,75) and (275.33,87) .. (331.33,86) .. controls (387.33,85) and (466.33,83) .. (491.33,58) ;
%Curve Lines [id:da35374390890559027] 
\draw  [dash pattern={on 0.84pt off 2.51pt}]  (182.33,56) .. controls (244.33,94) and (290.33,109) .. (292.33,159) ;
%Curve Lines [id:da8357487451698379] 
\draw  [dash pattern={on 0.84pt off 2.51pt}]  (491.33,58) .. controls (434.33,97) and (388.33,89) .. (358.33,159) ;
%Curve Lines [id:da18753317562576854] 
\draw [color={rgb, 255:red, 208; green, 2; blue, 27 }  ,draw opacity=1 ]   (292.33,159) .. controls (359.33,156) and (300.33,157) .. (358.33,159) ;

% Text Node
\draw (193,205) node [anchor=north west][inner sep=0.75pt]   [align=left] {Y};
% Text Node
\draw (533,158) node [anchor=north west][inner sep=0.75pt]   [align=left] {$\displaystyle N_{C}$(Y)};
% Text Node
\draw (330,65) node [anchor=north west][inner sep=0.75pt]   [align=left] {$\displaystyle \alpha $};
% Text Node
\draw (307,132) node [anchor=north west][inner sep=0.75pt]   [align=left] {$\displaystyle \pi _{Y}$($\displaystyle \alpha $)};
% Text Node
\draw (309,163) node [anchor=north west][inner sep=0.75pt]   [align=left] {$\displaystyle \leq C$};

\end{tikzpicture}
    \caption{$Y$ is $C$-contracting.}
    \label{fig: C-contracting}
\end{figure}

Two subsets $Y,Z \subseteq X$ have \textit{$\mathcal{R}$-bounded intersections} for a function $\mathcal{R}: [0,+\infty) \rightarrow [0,+\infty)$ if $\diam(N_r(Y)\cap N_r(Z)) \leq \mathcal{R}(r)$, for all $r \geq 0$.

Let $G$ be a group acting isometrically on a geodesic metric space $(X,d)$. Let $o\in X$ be a basepoint. An element $h \in G$ is called a \textit{contracting element} if the orbit $\bracket{h}\cdot o$ is a contracting subset in $X$ and the map $\mathbb{Z} \rightarrow X$, $n \mapsto h^no$ is a quasi-isometric embedding. One significant source of contracting elements arises from hyperbolic elements in groups that act isometrically on Gromov-hyperbolic spaces.

Suppose $g,h \in G$ are two contracting elements. $g$ and $h$ are called \textit{weakly-independent} if $\bracket{g} \cdot o$ and $\bracket{h} \cdot o$ have $\mathcal{R}$-bounded intersections for some $\mathcal{R} \colon [0,+\infty) \rightarrow [0,+\infty)$. The action $G\curvearrowright X$ is said to have \textit{contracting property} if there exist two weakly-independent contracting elements in $G$. We refer the readers to \cite{WXY24} for more details about contracting subsets and contracting elements.

\begin{example}\cite[Example 2.32]{WXY24}\label{exam: contracting}
     The following provides abundant examples of group actions with contracting property. 
     \begin{enumerate}
         \item A non-elementary group action (i.e. the limit set has at least 3 points) on a Gromov-hyperbolic space.
         \item A group $G$ with non-trivial Floyd boundary (for example, a relatively hyperbolic group) acts on its Cayley graph with respect to a finite generating set $S$.
         \item A non-elementary group action on a $\rm{CAT}(0)$-space with rank-$1$ elements.
         \item A $Gr^\prime(1/6)$-labeled graphical small cancellation group $G$ with finite components labeled by a finite set $S$ acts on the Cayley graph with respect to the generating set $S$.
         \item Any non-virtually cyclic group acts properly on a geodesic metric space with one contracting element.
         \item The mapping class group of a hyperbolic surface $\mathrm{Mod}(\Sigma)$ acts on the Teichm\"{u}ller space $Teich(\Sigma)$ equipped with Teichm\"{u}ller metric, or on the curve complex $\C(\Sigma)$. 
     \end{enumerate}
\end{example}

\section{Constructions of simultaneously contracting elements}\label{sec: SC}

The goal of this section is to prove Theorem \ref{IntroThm: SC}, which also serves as an important ingredient in the proofs of Theorems \ref{IntroThm: SH}, \ref{IntroThm: SH pos den}.

\subsection{Existence of simultaneously contracting elements}\label{subsec: SC}

\begin{defn}
    Suppose that a group $G$ acts isometrically on finitely many geodesic metric spaces $X_1,\ldots, X_l$ with contracting property, respectively. Then we define $$\mathcal{SC}(G)\coloneqq\{g\in G: g \text{ is simultaneously contracting on each } X_i \text{ for } 1\le i\le l\}.$$

    Let $A$ be a subset of $\{1,\ldots,l\}$. For convenience, we denote $$\mathcal{SC}(G; A)\coloneqq\{g\in G: g \text{ is simultaneously contracting on each } X_i \text{ for } i\in A\}.$$ Without ambiguity, we briefly write $\sc(G)$ to mean $\sc(G;\{1,\ldots,l\})$ at times.

    A subset $F\subset \sc(G;A)$ is called \textit{independent} if any pair of distinct elements in $F$ are weakly-independent on each $X_i$ for $i\in A$. 
\end{defn}

In this subsection, our goal is to prove the following result, which is the main part of Theorem \ref{IntroThm: SC}.
\begin{theorem}
\label{thm: simul contracting elements}
    Suppose that a group $G$ acts isometrically on finitely many geodesic metric spaces $X_1,\ldots, X_l$ with contracting property, respectively. Then the set $\mathcal{SC}(G)\ne \varnothing$. Moreover, $\mathcal{SC}(G)$ contains an infinite independent subset.
\end{theorem}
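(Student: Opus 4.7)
The plan is to prove the theorem by generalizing Yang's Extension Lemma from \cite{Yan19} from a single contracting action to $l$ simultaneous contracting actions, and then reading off both the non-emptiness of $\mathcal{SC}(G)$ and the existence of an infinite independent subfamily from that generalization.

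First, I would use the contracting property of each action $G \curvearrowright X_i$ to build, for any prescribed cardinality, a finite subset $F_i \subset G$ consisting of pairwise weakly-independent contracting elements on $X_i$. This is a standard consequence of applying the single-space Extension Lemma on $X_i$ to the two given weakly-independent contracting elements (and high powers or conjugates thereof). The sets $F_i$ will serve as pivot sets for $X_i$.

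The core technical step is a simultaneous Extension Lemma of the following shape: for all sufficiently large pivot sets $F_1, \ldots, F_l$ (with minimum cardinality determined by the contracting data on each $X_i$) and for any $g_1, \ldots, g_n \in G$, there exist $f_{j,i} \in F_i$ for $1 \leq j \leq n$ and $1 \leq i \leq l$ such that
\[
w \;=\; (f_{1,1} f_{1,2} \cdots f_{1,l})\, g_1\, (f_{2,1} \cdots f_{2,l})\, g_2 \,\cdots\, (f_{n,1} \cdots f_{n,l})\, g_n
\]
is contracting on every $X_i$ simultaneously. The idea is to read $w$ from the perspective of each space separately: on $X_i$, the letters $f_{j,i}$ ($j = 1, \ldots, n$) play the role of pivots, while everything between consecutive pivots contributes to a deterministic connecting word. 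Yang's single-space Extension Lemma guarantees that, for fixed surrounding letters, only a bounded number (independent of $n$ and of the choices made for other spaces) of elements in $F_i$ are forbidden as pivots at position $j$. Choosing $|F_i|$ larger than this bound and fixing the $f_{j,i}$'s sequentially in the lexicographic order of $(j,i)$ then leaves room at every step to avoid all forbidden choices across the $l$ spaces. Non-emptiness of $\mathcal{SC}(G)$ follows by taking, say, $n = 2$ with $g_1 = g_2 = 1_G$.

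For the infinite independent subfamily, I would iterate the above construction by partitioning a sufficiently enlarged $F_i$ into infinitely many disjoint finite blocks, producing elements $w^{(m)} \in \mathcal{SC}(G)$ whose $X_i$-pivots for different indices $m$ come from different blocks. Pairwise weak-independence of the $w^{(m)}$ on each $X_i$ then follows from the pairwise weak-independence of pivots within $F_i$, combined with the standard fact that the contracting axis of a pivot-product element on $X_i$ stays uniformly close to the concatenation of axes of its $X_i$-pivots. The main obstacle is the combinatorial bookkeeping in the simultaneous Extension Lemma: the bad-pivot conditions for different $X_i$'s all interact through the shared word structure of $w$, and one must verify that sequential avoidance can be carried out in parallel across all $l$ coordinates without deadlock. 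The crucial point making this work is that on each $X_i$ the forbidden set for $f_{j,i}$ depends only on the geometry of $X_i$ and on the already-chosen surrounding letters, so no genuinely global obstruction to a consistent choice arises.
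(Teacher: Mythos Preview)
Your proposal has a genuine gap at the heart of the simultaneous Extension Lemma. You assert that ``the forbidden set for $f_{j,i}$ depends only on the geometry of $X_i$ and on the already-chosen surrounding letters,'' but this ignores the \emph{outgoing} admissibility condition. Viewed on $X_i$, the connector immediately following the pivot $f_{j,i}$ is
\[
c_j \;=\; f_{j,i+1}\cdots f_{j,l}\,g_j\,f_{j+1,1}\cdots f_{j+1,i-1},
\]
and every letter of $c_j$ lies strictly after $(j,i)$ in your lexicographic order. Whether the junction between the axis of $f_{j,i}$ and $c_j$ is admissible on $X_i$ depends on where $c_j$ sends $o_i$; you cannot evaluate this when $f_{j,i}$ is being chosen, and by the time $c_j$ is finally determined, $f_{j,i}$ is already fixed and may well be the unique bad choice. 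You also cannot transfer this constraint to the later letters of $c_j$, since those are drawn from $F_{i'}$ with $i'\neq i$ for $X_{i'}$-reasons and their action on $X_i$ is entirely unconstrained. This is precisely the deadlock you flag in your last paragraph, and your one-sentence dismissal does not resolve it. A related symptom: the Extension Lemma actually available (Lemma~\ref{extensionlemma}) carries the hypothesis $d(o,go)>D$ on the connecting element, and nothing in your setup provides such a lower bound for the $c_j$ on $X_i$.

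The paper avoids this entanglement by induction on $l$. Given large independent sets $F\subset\mathcal{SC}(G;\{1,\dots,l-1\})$ and $T\subset\mathcal{SC}(G;\{2,\dots,l\})$, it searches for an element of $\mathcal{SC}(G)$ among short products such as $f_ig_j$, $f_if_1g_1g_j$, $g_jg_1f_1f_i$. Since every $f_i\in F$ is already a good pivot on $X_1,\dots,X_{l-1}$ and every $g_j\in T$ on $X_2,\dots,X_l$, the only uncontrolled data are the displacements of the $g_j$ on $X_1$ and of the $f_i$ on $X_l$; the paper splits into the four large/small cases and runs a pigeonhole count in each (Lemmas~\ref{f>g>}--\ref{f<g>}). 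That case analysis is exactly the work your sequential-avoidance claim was meant to do. The infinite independent subset is then obtained from a single element of $\mathcal{SC}(G)$ via conjugation (Lemma~\ref{lem: conjuagte contracting} and Lemma~\ref{lem: InfIndSet}), not by disjoint pivot blocks.
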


The most important tool to prove Theorem \ref{thm: simul contracting elements} is the following Extension Lemma.

\begin{lemma}[Extension Lemma]\cite[Lemma~3.2]{WXY24}
\label{extensionlemma}
    Suppose that $G$ acts isometrically on a metric space $X$ with a basepoint $o\in X$. Then for any three pairwise weakly-independent contracting elements $h_1,h_2,h_3\in G$, there exists a constant $D>0$ with the following property. Fix any $F = \{f_1, f_2, f_3\}$ with $f_i\in \langle h_i\rangle $ and $ d(o,f_io)>D$. Then, for any $g\in G$ satisfying $d(o,go)>D$, we can choose $f\in F$ so that $gf,fg$ are contracting elements on $X$.
\end{lemma}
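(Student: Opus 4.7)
The plan is to use a projection-based pigeonhole argument together with an ``admissible path'' criterion, in the spirit of Yang's original Extension Lemma \cite{Yan19}. Set $A_i := \langle h_i\rangle \cdot o$ for $i=1,2,3$. Since each $h_i$ is contracting, each $A_i$ is a $C$-contracting quasi-axis for some uniform $C \geq 0$, and the pairwise weak-independence together with the contracting property of $X_i$ yields a bounded projection principle: there exists $B \geq 0$ such that $\mathrm{diam}(\pi_{A_i}(A_j)) \leq B$ whenever $i \neq j$.

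The key step is the following shadow-based pigeonhole. Fix a threshold $L = L(C,B)$. For any point $x \in X$, at most one index $i \in \{1,2,3\}$ can satisfy $d(\pi_{A_i}(o), \pi_{A_i}(x)) > L$. The reason is that, by the definition of $C$-contracting subset, if this displacement is large, then any geodesic $[o,x]$ must enter a deep portion of $N_C(A_i)$, with its entry and exit points projecting near $\pi_{A_i}(o)$ and $\pi_{A_i}(x)$ respectively. If two distinct indices $i \neq j$ both exhibited this behavior, the geodesic $[o,x]$ would have to contain two long excursions into $N_C(A_i)$ and $N_C(A_j)$, forcing $N_{C+B}(A_i) \cap N_{C+B}(A_j)$ to be unbounded in contradiction to weak-independence. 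Applying the pigeonhole once with $x = go$ and once with $x = g^{-1}o$ eliminates at most two indices, so there remains some $i \in \{1,2,3\}$ with
\[
d(\pi_{A_i}(o), \pi_{A_i}(go)) \leq L \qquad \text{and} \qquad d(\pi_{A_i}(o), \pi_{A_i}(g^{-1}o)) \leq L.
\]

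For this index $i$, choose $D$ large (depending on $C$, $B$, $L$, and the quasi-isometry constants of the orbit map $n \mapsto h_i^n o$), take $f = f_i \in \langle h_i\rangle$ with $d(o, fo) > D$, and consider the bi-infinite concatenation
\[
P \;=\; \bigcup_{n\in\mathbb Z} (gf)^n\bigl([o, go]\cup g[o, fo]\bigr),
\]
whose ``long'' segments $(gf)^n g \cdot [o, fo]$ fellow-travel the $C$-contracting sets $(gf)^n g \cdot A_i$. Because $d(o,fo) > D$, each such segment has length much larger than $L$, while the pigeonhole step guarantees that the transition projections at each junction are controlled by $L$. For $D$ sufficiently large, this exactly matches the hypotheses of the admissible-path / contracting-axis criterion of \cite{WXY24}: $P$ is an $\langle gf\rangle$-invariant quasi-geodesic that stays within a uniform neighborhood of a union of contracting subsets. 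Hence $gf$ is a contracting element on $X$, and since $fg = f(gf)f^{-1}$ is conjugate to $gf$ by the isometry $f$, it is contracting as well.

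The principal obstacle is the precise calibration of $D$ relative to $C$, $B$, $L$, and the orbit quasi-isometry constants, so that the junctions in $P$ never backtrack: this is exactly what the shadow pigeonhole is designed to guarantee, and once it is in place, the admissible-path machinery does the rest. The rest of the argument reduces to bookkeeping within standard contracting-geometry estimates.
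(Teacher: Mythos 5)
The paper does not prove this lemma at all: it is imported verbatim as \cite[Lemma~3.2]{WXY24}, whose proof (following Yang's Extension Lemma) is exactly the argument you sketch — a projection pigeonhole over the three pairwise weakly-independent axes applied to $go$ and $g^{-1}o$, then the admissible-path/contracting-system criterion for the periodic concatenation, and conjugacy $fg=f(gf)f^{-1}$ for the second element. Your outline is essentially correct and matches that source; just note that $D$ must be fixed uniformly (a maximum over the three axes and the projection constants) before $g$ is given, and the contradiction in the pigeonhole step is with the bound $\mathcal{R}(\cdot)$ on $\diam(N_r(A_i)\cap N_r(A_j))$ (using that all three quasi-axes pass through the common basepoint $o$, so the two deep excursions overlap), not literal unboundedness.
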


Based on the above Extension Lemma, we introduce a combinatorial method which will be used frequently in our constructions.
\begin{convention}[SC construction]\label{SC construction}
     Suppose that a group $G$ acts isometrically on geodesic metric spaces $X_1,\cdots,X_l$ with contracting property, respectively.  Fix a basepoint $o_k\in X_k$ for $1\le k\le l$. Suppose that there is an independent subset $F=\{f_1,\cdots,f_{s}\}\subset\sc(G;\{1,\cdots,l\})$ for an $s>2l$. By Lemma~\ref{extensionlemma}, there is a uniform constant $D>0$ with the following property: For any $1\leq k\leq l$, for any three elements $f_p,f_q,f_r\subset\{f_{1},\cdots,f_{s}\}$ and any three integers $M_p,M_q,M_r$ satisfying $d(o_k,f_i^{M_i}o_k)>D$ for $i=p,q,r$, and any $h\in G$ satisfying $d(o_k,ho_k)>D$, there is an $f\in\{f_p^{M_p},f_q^{M_q},f_r^{M_r}\}$ such that $fh,hf$ are contracting elements on $X_k$.  

     Suppose $h_1,\cdots,h_t\in G$ satisfy that $d(o_k,h_jo_k)>D$ for any $1\leq k\leq l$, $1\leq j\leq t$. By Lemma~\ref{extensionlemma}, there is an $f\in\{f_1,f_2,f_3\}$ such that $fh_1,h_1f$ are contracting on $X_1$. Then substitute $f$ by $f_4$ in $\{f_1,f_2,f_3\}$. By Lemma~\ref{extensionlemma}, there is an $f^\prime\in\{f_1,f_2,f_3,f_4\}-\{f\}$ such that $f^\prime h_1,h_1f^\prime$ are contracting on $X_1$. Then substitute $f^\prime$ by $f_5$ in $\{f_1,f_2,f_3,f_4\}-\{f\}$. Repeating this process, we get $s-2$ elements $\{f_{i_1},\cdots,f_{i_{s-2}}\}\subset F$ such that $f_{i_p}h_1$ is contracting on $X_1$ for any $1\leq p\leq s-2$. Repeating this process for $\{f_{i_1},\cdots,f_{i_{s-2}}\}$ on $X_2$, we get $s-4$ elements $\{ f_{j_1},\cdots,f_{j_{s-4}} \}\subset \{f_{i_1},\cdots,f_{i_{s-2}}\}$ such that $f_{j_q}h_1$ is contracting on $X_2$ for any $1\leq q\leq s-4$. Notice that $f_{j_q}h_1$ is also contracting on $X_1$ for any $1\leq q\leq s-4$. Repeating this process until $X_l$, we get a subset $F^\prime\subset F$ with $s-2l$ elements such that for any $f\in F^\prime$, $fh_1,h_1f\in \sc(G;\{1,\cdots,l\})$. Do this process for $h_2,\cdots,h_t$, respectively. We get $t(s-2l)$ elements of the form $f_ih_j$ where $1\leq i\leq s,1\leq j\leq t$ such that they are all contracting on $X_1,\cdots,X_l$.

     We call this process {\hyperref[SC construction]{SC construction} for $h_1,\cdots,h_t$ on $X_1,\cdots,X_l$ with $f_1,\cdots,f_s$}.
\end{convention}

We will prove $\sc(G)\ne \varnothing$ by induction on $l$. First, we make a overall assumption for convenience.

\begin{assumption}[*]\label{assumption}
    Suppose that a group $G$ acts isometrically on geodesic metric spaces $X_1,\cdots,X_l$ with contracting property, respectively. Fix a basepoint $o_k\in X_k$ for $1\le k\le l$. Suppose that there are two independent subsets $F=\{f_1,\cdots,f_{2s}\}\subset\sc(G;\{1,\cdots,l-1\})$ and $T=\{g_1,\cdots,g_{2s}\}\subset\sc(G;\{2,\cdots,l\})$ for an $s>2l+1$. By Lemma~\ref{extensionlemma}, there exist a uniform constant $D>0$ and sufficiently large integers $M_1,\cdots,M_{2s}, N_1,\cdots, N_{2s}$ with the following properties: 
    \begin{enumerate}
        \item[(P1)] For each $1\leq k\leq l-1$, any $h\in G$ with $d(o_k,ho_k)>D$ and any three integers $\{p,q,r\}\subset \{1,\cdots,2s\}$, there exists $f\in \{f_p^{M_p},f_q^{M_q},f_r^{M_r}\}$ such that $fh,hf$ are contracting elements on $X_k$.
        \item[(P2)] For each $2\leq k\leq l$, any $h\in G$ with $d(o_k,ho_k)>D$ and any three integers $\{p,q,r\}\subset \{1,\cdots,2s\}$, there exists $g\in \{g_p^{N_p},g_q^{N_q},g_r^{N_r}\}$ such that $gh,hg$ are contracting elements on $X_k$.
        \item[(P3)]\label{p1} (\textbf{Distance Estimates}) For any $1\le k_1\le l-1, 2\le k_2\le l, 2\le k_3\le l-1$, one has
        \begin{equation*}
            d(o_{k_1},f_1^{M_1}o_{k_1})-2(2s-1)D>\cdots > d(o_{k_1},f_i^{M_i}o_{k_1})-2(2s-i)D>\cdots >d(o_{k_1},f_{2s}^{M_{2s}}o_{k_1})>3D,
        \end{equation*}
        \begin{equation*}
            d(o_{k_2},g_1^{N_1}o_{k_2})-2(2s-1)D>\cdots > d(o_{k_2},g_i^{N_i}o_{k_2})-2(2s-i)D>\cdots >d(o_{k_2},g_{2s}^{N_{2s}}o_{k_2})>3D,
        \end{equation*}
        \begin{equation*}
            d(o_{k_3},g_{2s}^{N_{2s}}o_{k_3})-2D>2d(o_{k_3},f_1^{M_1}o_{k_3}).
        \end{equation*}
    \end{enumerate}

    For convenience, we omit the exponent superscript and use $f_i$ (resp. $g_j$) to represent $f_i^{M_i}$ (resp. $g_j^{N_j}$) for $1\le i,j\le 2s$.
\end{assumption}

Now we turn to discuss the following cases according to the actions of $\{g_1,\cdots,g_{2s}\}$ (resp. $\{f_1,\cdots,f_{2s}\}$) on $X_1$ (resp. $X_l$).

\begin{lemma}
\label{f>g>}
    Under \hyperref[assumption]{Assumption (*)}, if $d(o_1,g_jo_1)>D$ for any $1\leq j\leq 2s$ and $d(o_l,f_io_l)>D$ for any $1\leq i\leq 2s$, then some element of the form $f_ig_j$ lies in $\mathcal{SC}(G,\{1,\cdots,l\})$. 
\end{lemma}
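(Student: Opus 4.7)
The plan is to apply the \hyperref[SC construction]{SC construction} twice, from two different angles, and then use a pigeonhole argument to intersect the two families of admissible pairs $(i,j)$. Concretely, I would extend the $g_j$'s by the family $F$ on the first $l-1$ spaces, and symmetrically extend the $f_i$'s by the family $T$ on the last $l-1$ spaces, thereby making the product $f_i g_j$ contracting on all $X_k$ for $1\le k\le l$.

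First I would check that the distance hypotheses for the \hyperref[SC construction]{SC construction} are satisfied. For the first application, I need $d(o_k,g_j o_k)>D$ for every $1\le k\le l-1$ and every $j$: the case $k=1$ is exactly the hypothesis of the lemma, and for $2\le k\le l-1$ it follows from $g_j\in\sc(G;\{2,\ldots,l\})$ together with the distance estimate (P3), which forces $d(o_k,g_j o_k)>3D$. Symmetrically, for the second application I need $d(o_k,f_i o_k)>D$ for every $2\le k\le l$; the case $k=l$ is again the hypothesis, while $2\le k\le l-1$ follows from $f_i\in\sc(G;\{1,\ldots,l-1\})$ and (P3).

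Running the \hyperref[SC construction]{SC construction} for each $g_j$ on $X_1,\ldots,X_{l-1}$ with $F$ yields, for every $j\in\{1,\ldots,2s\}$, a subset $I_j\subset\{1,\ldots,2s\}$ with $|I_j|\ge 2s-2(l-1)$ such that $f_i g_j$ (and $g_j f_i$) is contracting on each $X_k$ with $1\le k\le l-1$ whenever $i\in I_j$. Running the construction symmetrically for each $f_i$ on $X_2,\ldots,X_l$ with $T$ yields, for every $i$, a subset $J_i\subset\{1,\ldots,2s\}$ with $|J_i|\ge 2s-2(l-1)$ such that $f_i g_j$ is contracting on each $X_k$ with $2\le k\le l$ whenever $j\in J_i$. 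Crucially, both directions produce the same product $f_ig_j$, thanks to the two-sided conclusion of the Extension Lemma.

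Finally I would apply a pigeonhole argument inside $\{1,\ldots,2s\}^2$. Setting $A=\{(i,j):i\in I_j\}$ and $B=\{(i,j):j\in J_i\}$, one has
\[
|A|,\ |B|\ \ge\ 2s\bigl(2s-2(l-1)\bigr),
\]
so $|A\cap B|\ge |A|+|B|-(2s)^2\ge 4s(s-2l+2)$, which is strictly positive because $s>2l+1$. Any $(i,j)\in A\cap B$ gives $f_i g_j\in\sc(G;\{1,\ldots,l\})$, as required. The only conceptual subtlety is the double-sided use of the Extension Lemma, which is what makes the two SC constructions compatible on the common product $f_ig_j$; after that, the numerical assumption $s>2l+1$ is precisely what is needed to make the counting succeed.
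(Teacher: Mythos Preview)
Your proposal is correct and follows essentially the same strategy as the paper: two passes of the \hyperref[SC construction]{SC construction} followed by a pigeonhole count on $\{1,\ldots,2s\}^2$. The only difference is that the paper runs the second pass on $X_l$ alone rather than on $X_2,\ldots,X_l$, which yields the slightly sharper count $2s(2s-2)+2s(2s-2(l-1))-(2s)^2=4s^2-4sl>0$; your symmetric choice gives $4s(s-2l+2)>0$, which is weaker but still positive under the standing assumption $s>2l+1$.
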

\begin{proof}
    Using \hyperref[SC construction]{SC construction} for $g_1,\cdots,g_{2s}$ on $X_1,\cdots,X_{l-1}$ with $f_1,\cdots,f_{2s}$, we get $2s(2s-2(l-1))$ elements of the form $f_ig_j$ where $1\leq i\leq 2s,1\leq j\leq 2s$ such that they are all contracting on $X_1,\cdots,X_{l-1}$.

    Using \hyperref[SC construction]{SC construction} for $f_1,\cdots,f_{2s}$ on $X_l$ with $g_1,\cdots,g_{2s}$, we get $2s(2s-2)$ elements of the form $f_ig_j$ where $1\leq i\leq 2s,1\leq j\leq 2s$ such that they are all contracting on $X_l$.

    Note that the total number of elements of the form $f_ig_j$ where $1\leq i,j\leq 2s$ is $(2s)^2$. Since
    $$2s(2s-2(l-1))+2s(2s-2)-(2s)^2=4s^2-4sl>0,$$
    by the Pigeonhole Principle, there must be an element of the form $f_ig_j$ where $1\leq i,j\leq 2s$ such that it is simultaneously contracting on $X_1,\cdots,X_{l-1}$ and $X_l$. 
\end{proof}

\begin{lemma}
\label{f<g<}
    Under \hyperref[assumption]{Assumption (*)}, if $d(o_1,g_jo_1)\leq D$ for any $1\leq j\leq 2s$ and $d(o_l,f_io_l)\leq D$ for any $1\leq i\leq 2s$, then some element of the form $f_if_1g_1g_j$ lies in $\mathcal{SC}(G,\{1,\cdots,l\})$. 
\end{lemma}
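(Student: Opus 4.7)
The plan is to adapt the strategy of Lemma~\ref{f>g>} by prepending the fixed elements $f_1$ and $g_1$ so that the four-letter word has enough displacement on the spaces where the outer letters are small: $f_1$ supplies displacement on $X_1$ (where each $g_j$ barely moves $o_1$) while $g_1$ supplies displacement on $X_l$ (where each $f_i$ barely moves $o_l$). Two applications of the \hyperref[SC construction]{SC construction} together with a Pigeonhole argument will then select a suitable pair $(i,j)$.

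For the contraction on $X_1$, set $u_j := f_1 g_1 g_j$ for $1 \le j \le 2s$. By hypothesis $d(o_1, g_1 o_1), d(o_1, g_j o_1) \le D$, so $d(o_1, g_1 g_j o_1) \le 2D$, and then (P3) yields $d(o_1, u_j o_1) \ge d(o_1, f_1 o_1) - 2D > D$. Applying the \hyperref[SC construction]{SC construction} to $\{u_j\}_{j=1}^{2s}$ on $X_1$ alone with extenders $\{f_i\}_{i=1}^{2s}$ produces at least $2s(2s-2)$ pairs $(i,j)$ for which $f_i u_j = f_i f_1 g_1 g_j$ is contracting on $X_1$.

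For the contraction on $X_2,\ldots,X_l$, set $v_i := f_i f_1 g_1$ for $1 \le i \le 2s$ and verify $d(o_k, v_i o_k) > D$ for $2 \le k \le l$. When $k=l$, the hypothesis $d(o_l, f_i o_l), d(o_l, f_1 o_l) \le D$ combined with (P3) gives $d(o_l, v_i o_l) \ge d(o_l, g_1 o_l) - 2D > D$. When $2 \le k \le l-1$, the third inequality of (P3) yields $d(o_k, g_1 o_k) \ge d(o_k, g_{2s} o_k) > 2 d(o_k, f_1 o_k) + 2D$, and since $f_1$ dominates $f_i$ in displacement (by the first (P3) chain) one has $d(o_k, f_i o_k) + d(o_k, f_1 o_k) \le 2 d(o_k, f_1 o_k)$; hence $d(o_k, v_i o_k) \ge d(o_k, g_1 o_k) - d(o_k, f_i o_k) - d(o_k, f_1 o_k) > 2D$. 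Applying the \hyperref[SC construction]{SC construction} to $\{v_i\}_{i=1}^{2s}$ on $X_2,\ldots,X_l$ with extenders $\{g_j\}_{j=1}^{2s}$ (placed on the right, which is permitted since Lemma~\ref{extensionlemma} provides contraction in both orders) yields at least $2s(2s - 2(l-1))$ pairs $(i,j)$ for which $v_i g_j = f_i f_1 g_1 g_j$ is contracting on $X_2,\ldots,X_l$.

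Since $2s(2s-2) + 2s(2s - 2(l-1)) = 8s^2 - 4sl > 4s^2 = (2s)^2$ (using $s > l$, which follows from $s > 2l+1$), by Pigeonhole some pair $(i,j)$ lies in both families, giving $f_i f_1 g_1 g_j \in \sc(G;\{1,\ldots,l\})$ as desired. The main obstacle is the middle-space displacement bound for $v_i$: on $X_k$ with $2 \le k \le l-1$ both the $f$'s and the $g$'s are contracting, so a careless triangle inequality produces a trivial or even negative bound. The third inequality of (P3), calibrated so that $g_{2s}$'s displacement exceeds twice $f_1$'s by more than $2D$, is exactly what rescues this estimate and motivates the specific choice of $f_1$ and $g_1$ as the fixed energetic factors.
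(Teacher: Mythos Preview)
Your proof is correct and follows essentially the same strategy as the paper: both set $u_j=f_1g_1g_j$ and run the \hyperref[SC construction]{SC construction} on $X_1$ with the $f_i$'s, set $v_i=f_if_1g_1$ and run it on $X_2,\ldots,X_l$ with the $g_j$'s, then Pigeonhole over the $(2s)^2$ pairs. Your displacement estimates (including the middle-space bound via the third line of (P3) and the dominance $d(o_k,f_io_k)\le d(o_k,f_1o_k)$) match the paper's, and your Pigeonhole arithmetic $8s^2-4sl>4s^2$ is in fact cleaner than the paper's stated value.
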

\begin{proof}
    For any $1\leq j\leq 2s$, by triangle inequality,  $d(o_1,g_1g_jo_1)\leq 2D$ and then $d(o_1,f_1g_1g_jo_1)\geq d(o_1,f_1o_1)-d(f_1o_1,f_1g_1g_jo_1)>3D-2D=D$. Using \hyperref[SC construction]{SC construction} for $f_1g_1g_1,\cdots,f_1g_1g_{2s}$ on $X_1$ with $f_1,\cdots,f_{2s}$, we get $2s(2s-2)$ elements of the form $f_if_1g_1g_j$ where $1\leq i,j\leq 2s$ such that they are all contracting on $X_1$.

    For any $1\leq i\leq 2s$, by (\hyperref[p1]{P3}) and triangle inequality, one has $$\forall 2\le k\le l-1,\quad d(o_k,f_if_1g_1o_k)\geq -d(o_k,f_if_1o_k)+d(f_if_1o_k,f_if_ig_1o_k)>-2d(o_k,f_1o_k)+d(o_k,g_1o_k)>D.$$ By triangle inequality again,  $d(o_l,f_if_1o_l)\leq 2D$ then $$d(o_l,f_if_1g_1o_l)\geq -d(o_lf_if_1o_l)+d(f_if_1o_l,f_if_1g_1o_l)>-2D+3D=D.$$ Using \hyperref[SC construction]{SC construction} for $f_1f_1g_1,\cdots,f_{2s}f_1g_1$ on $X_2,\cdots,X_l$ with $g_1,\cdots,g_{2s}$, we get $2s(2s-2(l-1))$ elements of the form $f_if_1g_1g_j$ where $1\leq i,j\leq 2s$ such that they are all contracting on $X_2,\cdots,X_l$.

    Note that the total number of elements of the form $f_if_1g_1g_j$ where $1\leq i,j\leq 2s$ is $(2s)^2$. Since
    $$2s(2s-2)+2s(2s-2(l-1))-(2s)^2=4s^2-(8l-4)s>0,$$
    by the Pigeonhole Principle, there must be an element of the form $f_if_1g_1g_j$ where $1\leq i,j\leq 2s$ such that it is simultaneously contracting on $X_1$ and $X_2,\cdots,X_l$. 
\end{proof}

\begin{lemma}
\label{f>g<}
    Under \hyperref[assumption]{Assumption (*)}, if $d(o_1,g_jo_1)\leq D$ for any $1\leq j\leq 2s$ and $d(o_l,f_io_l)>D$ for any $1\leq i\leq 2s$, then some element of the form $g_jf_1f_i$ or $g_jg_1f_1f_i$ lies in $\mathcal{SC}(G,\{1,\cdots,l\})$. 
\end{lemma}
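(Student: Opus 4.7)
The plan is to mimic the proof of Lemma~\ref{f<g<}: set up two SC-families whose spaces of contraction together cover $\{X_1,\ldots,X_l\}$, and intersect them by a Pigeonhole count in the set $\{(j,i):1\le j,i\le 2s\}$ of size $(2s)^2$. The new ingredient is that on $X_l$ neither form in the statement is contracting for every $i$, which forces a disjunction and a case split on $d(o_l,f_1 f_i o_l)$. Concretely, I would set $G_0=\{i:d(o_l,f_1 f_i o_l)>D\}$ and $G_1=\{i:d(o_l,g_1 f_1 f_i o_l)>D\}$; if $i\notin G_0$ then $d(o_l,g_1 f_1 f_i o_l)\ge d(o_l,g_1 o_l)-d(o_l,f_1 f_i o_l)>3D-D>D$ by~(\hyperref[p1]{P3}), so $G_0\cup G_1=\{1,\ldots,2s\}$ and $\max(|G_0|,|G_1|)\ge s$.

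Case $|G_0|\ge s$: I would work with the form $g_j f_1 f_i$. Triangle inequality and~(\hyperref[p1]{P3}) give $d(o_k,g_j f_1 o_k)>D$ for every $j$ and every $1\le k\le l-1$ (on $X_1$ the hypothesis $d(o_1,g_j o_1)\le D$ is dominated by $d(o_1,f_1 o_1)>3D$; on middle $X_k$ the third line of~(\hyperref[p1]{P3}) gives $d(o_k,g_j o_k)>2d(o_k,f_1 o_k)+2D$). The \hyperref[SC construction]{SC construction} extending $g_j f_1$ on the right by $f_i$ over $X_1,\ldots,X_{l-1}$ then yields at least $2s(2s-2(l-1))$ pairs with $g_j f_1 f_i$ contracting on those $l-1$ spaces, while for $i\in G_0$ the same construction extending $f_1 f_i$ on the left by $g_j$ over $X_l$ yields at least $|G_0|(2s-2)\ge s(2s-2)$ pairs contracting on $X_l$. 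Pigeonhole in $(2s)^2$ reduces to $s>2l-1$, which is implied by $s>2l+1$, producing the desired element.

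Case $|G_1|\ge s$: I would switch to the form $g_j g_1 f_1 f_i$. On $X_1$, the estimate $d(o_1,g_j g_1 f_1 o_1)\ge d(o_1,f_1 o_1)-2D>D$ together with the SC construction extending $g_j g_1 f_1$ on the right by $f_i$ gives $2s(2s-2)$ pairs contracting on $X_1$. For $2\le k\le l-1$, the third line of~(\hyperref[p1]{P3}) forces $d(o_k,g_1 f_1 f_i o_k)\ge d(o_k,g_1 o_k)-2d(o_k,f_1 o_k)>2D$, and for $i\in G_1$ by definition $d(o_l,g_1 f_1 f_i o_l)>D$; extending $g_1 f_1 f_i$ on the left by $g_j$ over $X_2,\ldots,X_l$ (with $i\in G_1$) then gives at least $|G_1|(2s-2(l-1))\ge s(2s-2(l-1))$ pairs contracting on $X_2,\ldots,X_l$. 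Pigeonhole here reduces to $s>l+1$, again implied by $s>2l+1$.

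The main obstacle is the $X_l$ analysis: there is no a priori upper bound on $d(o_l,f_1 f_i o_l)$, so neither form alone can be forced to be contracting on $X_l$ for every $i$, and the role of the $g_1$ buffer in Form~B is precisely to cover the indices where $f_1$ and $f_i$ nearly cancel on $X_l$. The third line of~(\hyperref[p1]{P3}) (comparing $g_{2s}$ with $f_1$) is the estimate that makes the Case~B middle-$X_k$ bounds hold uniformly in $j$, and it is what ultimately allows the two Pigeonhole counts to close.
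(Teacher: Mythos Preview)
Your proposal is correct and follows essentially the same approach as the paper. The paper splits on whether at least $s$ of the $f_i$ satisfy $d(o_l,f_1f_io_l)>D$ or at least $s$ satisfy $d(o_l,f_1f_io_l)\le D$, and in the second case derives $d(o_l,g_1f_1f_io_l)>D$ exactly as you do; your formulation via $G_0,G_1$ with $G_0\cup G_1=\{1,\ldots,2s\}$ is an equivalent (slightly cleaner) packaging of the same dichotomy, and the resulting Pigeonhole inequalities $s>2l-1$ and $s>l+1$ match the paper's.
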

\begin{proof}
    Consider $d(o_l,f_1f_io_l)$, $1\leq i\leq 2s$. By the Pigeonhole Principle, there are at least $s$ elements, denoted by $f_{i_1},\cdots,f_{i_s}$ for $i_1<\cdots<i_s$, in $F$ such that either $d(o_l,f_1f_io_l)>D$ for any $i\in\{i_1,\cdots,i_s\}$ or $d(o_l,f_1f_io_l)\leq D$ for any $i\in\{i_1,\cdots,i_s\}$.

    \textbf{Case (1): $d(o_l,f_1f_io_l)>D$ for any $i\in\{i_1,\cdots,i_s\}$.} 
    
    For any $j\in\{1,\cdots,2s\}$, by (\hyperref[p1]{P3}) and triangle inequality, one has $$d(o_1,g_jf_1o_1)\geq -d(o_1,g_jo_1)+d(g_jo_1,g_jf_1o_1)>-D+2D=D$$ and $$\forall 2\le k\le l-1, \quad d(o_k,g_jf_1o_1)\geq d(o_k,g_jo_k)-d(g_jo_k,g_jf_1o_k)>2D>D.$$  
    Using \hyperref[SC construction]{SC construction} for $g_{1}f_1,\cdots,g_{2s}f_1$ on $X_1,\cdots,X_{l-1}$ with $f_{i_1},\cdots,f_{i_s}$, we get $2s(s-2(l-1))$ elements of the form $g_jf_1f_i$ where $i\in\{i_1,\cdots,i_s\},1\leq j\leq 2s$ such that they are all contracting on $X_1,\cdots,X_{l-1}$. 

    Recall that in this case $d(o_l,f_1f_io_l)>D$ for any $i\in\{i_1,\cdots,i_s\}$. Using \hyperref[SC construction]{SC construction} for $f_1f_{i_1},\cdots,f_1f_{i_s}$ on $X_l$ with $g_1,\cdots,g_{2s}$, we get $s(2s-2)$ elements of the form $g_jf_1f_i$ where $i\in\{i_1,\cdots,i_s\},1\leq j\leq 2s$ such that they are all contracting on $X_l$.

    Note that the total number of elements of the form $f_ig_j$ where $i\in\{i_1,\cdots,i_s\},1\leq j\leq 2s$ is $2s^2$. Since
    $$2s(s-2(l-1))+s(2s-2)-2s^2=2s^2-(4l-2)s>0,$$
    by the Pigeonhole Principle, there must be an element of the form $g_jf_1f_i$ where $i\in\{i_1,\cdots,i_s\},1\leq j\leq 2s$ such that it is simultaneously contracting on $X_1,\cdots,X_{l-1}$ and $X_l$. 

    \textbf{Case (2): $d(o_l,f_1f_io_l)\leq D$ for any $i\in\{i_1,\cdots,i_s\}$.}

    By triangle inequality, $d(o_1,g_jg_1o_1)\leq 2D$, then (\hyperref[p1]{P3}) implies that $$d(o_1,g_jg_1f_1o_1)\geq -d(o_1,g_jg_1o_1)+d(g_jg_1o_1,g_jg_1f_1o_1)>-2D+3D=D.$$ Using \hyperref[SC construction]{SC construction} for $g_1g_1f_1,\cdots,g_{2s}g_1f_1$ on $X_1$ with $f_{i_1},\cdots,f_{i_s}$, we get $2s(s-2)$ elements of the form $g_jg_1f_1f_i$ where $i\in\{i_1,\cdots,i_s\},1\leq j\leq 2s$ such that they are all contracting on $X_1$. 

    By (\hyperref[p1]{P3}) and  traiangle inequality, one has that for any $2\leq k\leq l-1$ and any $i\in\{i_1,\cdots,i_s\}$, $$d(o_k,g_1f_1f_io_k)\geq d(o_k,g_1o_k)-d(g_1o_k,g_1f_1f_io_k)\geq d(o_k,g_1o_k)-2d(o_k,f_1o_k)>D.$$ Recall that in this case $d(o_l,f_1f_io_l)\leq D$ for any $i\in\{i_1,\cdots,i_s\}$. Then $$d(o_l,g_1f_1f_io_l)\geq d(o_l,g_1o_l)-d(g_1o_l,g_1f_1f_io_l)>2D-D=D.$$ Using \hyperref[SC construction]{SC construction} for $g_1f_1f_{i_1},\cdots,g_1f_1f_{i_s}$ on $X_2,\cdots,X_l$ with $g_1,\cdots,g_{2s}$, we get $s(2s-2(l-1))$ elements of the form $g_jg_1f_1f_i$ where $i\in\{i_1,\cdots,i_s\},1\leq j\leq 2s$ such that they are all contracting on $X_2,\cdots,X_l$. 

    Note that the total number of elements of the form $g_jg_1f_1f_i$ where $i\in\{i_1,\cdots,i_s\},1\leq j\leq 2s$ is $2s^2$. Since
    $$2s(s-2)+s(2s-2(l-1))-2s^2=2s^2-(2l+2)s>0,$$
    by the Pigeonhole Principle, there must be an element of the form $g_jg_1f_1f_i$ where $i\in\{i_1,\cdots,i_s\},1\leq j\leq 2s$ such that it is simultaneously contracting on $X_1$ and $X_2,\cdots,X_l$. 
\end{proof}

 The following two lemmas can be obtained in a similar way as Lemma~\ref{f>g<}.

\begin{lemma}
\label{f<g>}
    Under \hyperref[assumption]{Assumption (*)}, if $d(o_1,g_jo_1)>D$ for any $1\leq j\leq 2s$ and $d(o_l,f_io_l)\leq D$ for any $1\leq i\leq 2s$, then some element of the form $g_jg_1f_i$ or $g_jg_1f_1f_i$ lies in $\mathcal{SC}(G,\{1,\cdots,l\})$. 
\end{lemma}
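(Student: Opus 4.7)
The proof mirrors that of Lemma~\ref{f>g<}, exchanging the roles of $F$ and $T$, and of $o_1$ and $o_l$. The plan is to split according to the behavior of $g_jg_1$ on $X_1$, and in each subcase combine two instances of the \hyperref[SC construction]{SC construction} via the Pigeonhole Principle. Applying pigeonhole to the $2s$ values $d(o_1, g_jg_1o_1)$, we obtain $s$ indices $j_1 < \cdots < j_s$ in $\{1,\ldots,2s\}$ such that either $d(o_1, g_{j_k}g_1 o_1) > D$ holds uniformly in $k$, or $d(o_1, g_{j_k}g_1 o_1) \leq D$ holds uniformly in $k$.

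In the first subcase we seek an element of the form $g_{j_k} g_1 f_i$. Since $d(o_1, g_{j_k}g_1 o_1) > D$, the \hyperref[SC construction]{SC construction} on $X_1$ applied to the iterate list $g_{j_1}g_1, \ldots, g_{j_s}g_1$ with base $\{f_1,\ldots,f_{2s}\}$ yields at least $s(2s-2)$ elements $g_{j_k}g_1 f_i$ contracting on $X_1$. For $2 \leq k \leq l$, the comparisons in (\hyperref[p1]{P3}) together with the hypothesis $d(o_l, f_i o_l) \leq D$ give $d(o_k, g_1 f_i o_k) > D$ for all $i$, so the \hyperref[SC construction]{SC construction} on $X_2, \ldots, X_l$ applied to the iterate list $g_1 f_1, \ldots, g_1 f_{2s}$ with base $\{g_{j_1}, \ldots, g_{j_s}\}$ yields at least $2s(s-2(l-1))$ elements $g_{j_k}g_1 f_i$ contracting on $X_2,\ldots,X_l$. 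The universe of such candidates has size $2s^2$, and the inequality $s(2s-2) + 2s(s-2(l-1)) - 2s^2 = 2s(s - (2l-1)) > 0$ (which follows from $s > 2l+1$) lets the Pigeonhole Principle supply an element contracting on all of $X_1, \ldots, X_l$.

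In the second subcase we insert an extra $f_1$ to restore a large $X_1$-displacement: by (\hyperref[p1]{P3}) one has $d(o_1, f_1 o_1) > 3D$, hence $d(o_1, g_{j_k} g_1 f_1 o_1) > 2D > D$, so the \hyperref[SC construction]{SC construction} on $X_1$ with iterate list $\{g_{j_k} g_1 f_1\}_{k=1}^{s}$ and base $\{f_1,\ldots,f_{2s}\}$ produces $\geq s(2s-2)$ elements $g_{j_k}g_1f_1f_i$ contracting on $X_1$. On $X_2,\ldots,X_l$, the inequality $d(o_k, g_1 o_k) > 2 d(o_k, f_1 o_k) + 2D$ from (\hyperref[p1]{P3}) for $2 \leq k \leq l-1$, together with $d(o_l, f_1 f_i o_l) \leq 2D$ derived from the Lemma's hypothesis via the triangle inequality, yield $d(o_k, g_1 f_1 f_i o_k) > D$ for every $2 \leq k \leq l$; the corresponding \hyperref[SC construction]{SC construction} with iterate list $\{g_1 f_1 f_i\}_{i=1}^{2s}$ and base $\{g_{j_1}, \ldots, g_{j_s}\}$ then yields $\geq 2s(s - 2(l-1))$ elements contracting on $X_2, \ldots, X_l$, and an identical pigeonhole count concludes the proof. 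The only nontrivial step is verifying the displacement lower bounds on each $X_k$, where (\hyperref[p1]{P3}) together with the hypothesis must be used carefully on $X_l$ versus the middle spaces $X_2,\ldots,X_{l-1}$; after that the combinatorics is a direct mirror of Lemma~\ref{f>g<}.
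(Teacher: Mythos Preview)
Your proposal is correct and follows essentially the same approach as the paper: the same pigeonhole split on $d(o_1,g_jg_1o_1)$, the same pair of SC constructions in each subcase with the same iterate/base sets, and the identical counting $s(2s-2)+2s(s-2(l-1))-2s^2=2s(s-(2l-1))>0$. The displacement estimates you sketch (distinguishing $X_l$ from the middle spaces $X_2,\ldots,X_{l-1}$ via (\hyperref[p1]{P3}) and the hypothesis $d(o_l,f_io_l)\le D$) match the paper's line by line.
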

\begin{proof}
    Consider $d(o_1,g_jg_1o_1)$, $1\leq i\leq 2s$. By the Pigeonhole Principle, there are at least $s$ elements, denoted by $g_{j_1},\cdots,g_{j_s}$ for $j_1<\cdots<j_s$, in $S$ such that either $d(o_1,g_jg_1o_1)>D$ for any $j\in\{j_1,\cdots,j_s\}$ or $d(o_1,g_jg_1o_1)\leq D$ for any $j\in\{j_1,\cdots,j_s\}$.

    \textbf{Case (1): $d(o_1,g_jg_1o_1)>D$ for any $j\in\{j_1,\cdots,j_s\}$.}

    Using \hyperref[SC construction]{SC construction} for $g_{j_1}g_1,\cdots,g_{j_{s}}g_1$ on $X_1$ with $f_1,\cdots,f_{2s}$, we get $s(2s-2)$ elements of the form $g_jg_1f_i$, $1\leq i\leq 2s,j\in\{j_1,\cdots,j_s\}$ such that they are all contracting on $X_1$. 

    For any $1\leq i\leq 2s$, by (\hyperref[p1]{P3}) and  triangle inequality, one has that $$\forall 2\leq k\leq l-1,\quad d(o_k,g_1f_io_k)\geq d(o_k,g_1o_k)-d(g_1o_k,g_1f_io_k)\geq d(o_k,g_1o_k)-d(o_k,f_1o_k)>D.$$ By triangle inequality again, $$d(o_l,g_1f_io_l)\geq d(o_l,g_1o_l)-d(g_1o_l,g_1f_io_l)>2D-D=D.$$ Using \hyperref[SC construction]{SC construction} for $g_1f_1,\cdots,g_1f_{2s}$ on $X_2,\cdots,X_l$ with $g_{j_1},\cdots,g_{j_s}$, we get $2s(s-2(l-1))$ elements of the form $g_jg_1f_i$ where $1\leq i\leq 2s,j\in\{j_1,\cdots,j_s\}$ such that they are all contracting on $X_2,\cdots,X_l$. 

    Note that the total number of elements of the form $g_jg_1f_i$ where $1\leq i\leq 2s,j\in\{j_1,\cdots,j_s\}$ is $2s^2$. Since
    $$s(2s-2)+2s(s-2(l-1))-2s^2=2s^2-(4l-2)s>0,$$
    by the Pigeonhole Principle, there must be an element of the form $g_jg_1f_i$ where $1\leq i\leq 2s,j\in\{j_1,\cdots,j_s\}$ such that it is simultaneously contracting on $X_1$ and $X_2,\cdots,X_l$. 

    \textbf{Case (2): $d(o_1,g_jg_1o_1)\leq D$ for any $j\in\{j_1,\cdots,j_s\}$.} 
    
    For any $j\in\{j_1,\cdots,j_s\}$, by (\hyperref[p1]{P3}) and triangle inequality, one has $$d(o_1,g_jg_1f_1o_1)\geq -d(o_1,g_jg_1o_1)+d(g_jg_1o_1,g_jg_1f_1o_1)>-D+2D=D.$$ Using \hyperref[SC construction]{SC construction} for $g_{j_1}g_1f_1,\cdots,g_{j_s}g_1f_1$ on $X_1$ with $f_1,\cdots,f_{2s}$, we get $s(2s-2)$ elements of the form $g_jg_1f_1f_i$ where $1\leq i\leq 2s,j\in\{j_1,\cdots,j_s\}$ such that they are all contracting on $X_1$. 

    For any $1\leq i\leq 2s$, by (\hyperref[p1]{P3}) and triangle inequality, one has $$\forall 2\le k\le l-1,\quad d(o_k,g_1f_1f_io_k)\geq d(o_k,g_1o_k)-d(g_1o_k,g_1f_1f_io_k)\geq d(o_k,g_1o_k)-2d(o_k,f_1o_k)>D.$$ By triangle inequality again, $d(o_l,f_1f_io_l)\leq 2D$ and then $$d(o_l,g_1f_1f_io_l)\geq d(o_l,g_1o_l)-d(g_1o_l,g_1f_1f_io_l)>3D-2D=D.$$  
    Using \hyperref[SC construction]{SC construction} for $g_1f_1f_1,\cdots,g_1f_1f_{2s}$ on $X_2,\cdots,X_l$ with $g_{j_1},\cdots,g_{j_s}$, we get $2s(s-2(l-1))$ elements of the form $g_jg_1f_1f_i$ where $1\leq i\leq 2s,j\in\{j_1,\cdots,j_s\}$ such that they are all contracting on $X_2,\cdots,X_l$.

    Note that the total number of elements of the form $g_jg_1f_1f_i$ where $1\leq i\leq 2s,j\in\{j_1,\cdots,j_s\}$ is $2s^2$. Since
    $$s(2s-2)+2s(s-2(l-1))-2s^2=2s^2-(4l-2)s>0,$$
    by the Pigeonhole Principle, there must be an element of the form $g_jg_1f_1f_i$ where $1\leq i\leq 2s,j\in\{j_1,\cdots,j_s\}$ such that it is simultaneously contracting on $X_1$ and $X_2,\cdots,X_l$.
\end{proof}

The following two lemmas in \cite{WXY24} guarantee the first step of mathematical induction.
\begin{lemma}\cite[Lemma~3.6]{WXY24}
\label{4independent}
    Suppose that $G$ acts isometrically on a geodesic metric space $X$ with contracting property. For any contracting element $g\in G$ and any three pairwise weakly-independent contracting elements $h_1,h_2,h_3\in G$, there exists $1\le i\le 3$ such that $g$ and $h_i$ are weakly-independent.
\end{lemma}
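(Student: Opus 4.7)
The plan is to argue by contradiction: I assume that $g$ fails to be weakly-independent with every $h_i$ for $i\in\{1,2,3\}$, and then derive that two of the $h_i$ must fail to be weakly-independent with each other, contradicting the hypothesis that $h_1,h_2,h_3$ are pairwise weakly-independent.

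First I would unpack the definition. Failure of weak-independence of $g$ and $h_i$ means that no function $\mathcal{R}_i\colon[0,\infty)\to[0,\infty)$ bounds $\diam\bigl(N_r(\langle g\rangle o)\cap N_r(\langle h_i\rangle o)\bigr)$ uniformly in $r$, which is equivalent to the existence of some $r_i>0$ making this intersection unbounded. Choosing $r:=\max_i r_i$, for each $i\in\{1,2,3\}$ I obtain a sequence of pairs $(x_i^n,y_i^n)\in N_r(\langle g\rangle o)\cap N_r(\langle h_i\rangle o)$ with $d(x_i^n,y_i^n)\to\infty$. Next I use that, since $g$ is contracting, $\langle g\rangle o$ is a contracting quasi-geodesic with exactly two ends at infinity. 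The closest-point projections of $x_i^n$ and $y_i^n$ onto $\langle g\rangle o$ lie in a sub-arc whose length grows without bound as $n\to\infty$, so each $h_i$ becomes associated with one of the two ends of $\langle g\rangle o$. By the pigeonhole principle applied to three indices and two ends, two of the $h_i$, say $h_1$ and $h_2$, are associated with the same end. This should force long segments of $\langle h_1\rangle o$ and $\langle h_2\rangle o$ to fellow-travel long overlapping sub-arcs of $\langle g\rangle o$, and then by the triangle inequality these segments lie within a $2r$-neighborhood of one another along a sub-segment of unbounded length, producing unboundedness of $N_{2r}(\langle h_1\rangle o)\cap N_{2r}(\langle h_2\rangle o)$ and contradicting the weak-independence of $h_1$ and $h_2$.

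The main obstacle is the final step, that is, the transition from \emph{both orbits fellow-travel $\langle g\rangle o$ toward the same end} to \emph{the two orbits fellow-travel each other along an arbitrarily long segment}. Two subsets lying in a common neighborhood of a third subset do not automatically lie in a bounded neighborhood of each other; genuine overlap between the corresponding sub-arcs on the quasi-axis is required. To handle this I would invoke the contracting property of $\langle g\rangle o$ (together with the quasi-isometric embedding $n\mapsto g^n o$) to turn proximity to $\langle g\rangle o$ into controlled closest-point projections, and then exploit that two sub-arcs of a quasi-geodesic heading toward the same end must eventually overlap in arbitrarily long intervals. Translating this overlap on $\langle g\rangle o$ back to coarse overlap of $\langle h_1\rangle o$ and $\langle h_2\rangle o$ is what closes the argument.
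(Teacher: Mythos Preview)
The paper does not supply its own proof of this lemma: it is quoted verbatim as \cite[Lemma~3.6]{WXY24} and used as a black box. So there is nothing in the present paper to compare your argument against.

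That said, your outline is the standard one and is essentially correct. The contradiction strategy and the pigeonhole on the two ends of the contracting quasi-axis $\langle g\rangle o$ are exactly the right moves. Your identification of the only nontrivial step is also accurate: passing from ``both $\langle h_1\rangle o$ and $\langle h_2\rangle o$ have unbounded coarse intersection with $\langle g\rangle o$ toward the same end'' to ``$\langle h_1\rangle o$ and $\langle h_2\rangle o$ have unbounded coarse intersection with each other'' genuinely requires the Morse property of contracting subsets. Two small refinements make the step airtight. First, anchor everything at $o$: since $o$ lies in every orbit, the $h_i$-orbit sub-path from $o$ to a point $r$-close to $g^{m_n}o$ (with $m_n\to+\infty$) is a quasi-geodesic with both endpoints $r$-close to $\langle g\rangle o$, hence by the Morse property it stays in $N_{B}(\langle g\rangle o)$ for some $B=B(r)$; coarse connectivity of projections then shows that \emph{every} $g^M o$ with $0\le M\le m_n$ is within a uniform constant of $\langle h_i\rangle o$. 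Second, this gives a common sub-ray $\{g^M o: M\ge 0\}$ each of whose points is uniformly close to both $\langle h_1\rangle o$ and $\langle h_2\rangle o$, so the constant in the final contradiction is not $2r$ but some $2B(r)$ depending on the contracting and quasi-geodesic constants. With these adjustments your sketch becomes a complete proof.
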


\begin{lemma}\cite[Lemma~2.30]{WXY24}
\label{lem: conjuagte contracting}
     Suppose that $G$ acts isometrically on a geodesic metric space $X$ with contracting property. Let $g,h \in G$ be two weakly-independent contracting elements. Then there exists $N > 0$ such that the following holds.
     \begin{enumerate}
         \item For any $n,m \geq N$, $g^nh^m$ is a contracting element.
         \item For any $n,m \geq N$, $\{(h^mg^n)^kh(h^mg^n)^{-k} \mid k \in \mathbb{Z}\}$ is a collection of pairwise weakly-independent contracting elements.
     \end{enumerate}
\end{lemma}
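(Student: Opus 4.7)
The plan is to handle the two claims in turn using the admissible-path formalism for contracting subsets.

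For claim (1), I would show that for $n,m$ sufficiently large the broken path
\[
o,\ g^n o,\ g^n h^m o,\ g^n h^m g^n o,\ \ldots,\ (g^n h^m)^k o,\ \ldots
\]
is a quasi-geodesic whose image coincides (up to bounded error) with the orbit $\langle g^n h^m\rangle o$. Each maximal straight piece lies on a translate of $\langle g \rangle o$ or $\langle h \rangle o$, both of which are contracting subsets. Weak independence of $g$ and $h$ yields a function $\mathcal{R}$ bounding the overlap of $\langle g\rangle o$ and $\langle h\rangle o$; since $G$ acts by isometries, the same $\mathcal{R}$ governs the overlap at every corner of the broken path. Choosing $N$ large compared to the contracting constants of $\langle g\rangle o, \langle h\rangle o$ and to $\mathcal{R}$ forces consecutive corners to be separated enough that projections to each piece do not backtrack. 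A standard admissible-path lemma (of the type developed in \cite{WXY24}) then upgrades this concatenation to a uniform quasi-geodesic whose image is itself a contracting subset, with constants depending only on the data $C$ and $\mathcal{R}$. This simultaneously yields the quasi-isometric embedding $\mathbb{Z}\to X$, $k\mapsto (g^nh^m)^ko$, so that $g^nh^m$ is a contracting element.

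For claim (2), set $w=h^mg^n$, which is contracting by the symmetric version of claim (1) applied with the roles of $g$ and $h$ swapped. Each conjugate $w^k h w^{-k}$ is then contracting, since conjugation by the isometry $w^k$ carries the contracting orbit $\langle h\rangle o$ to the translate $w^k\langle h\rangle o$, which is an orbit of $\langle w^khw^{-k}\rangle$ through $w^ko$. To verify pairwise weak independence of the family $\{w^khw^{-k}\}_k$, fix $k\neq k'$ and use the contracting quasi-axis $\langle w\rangle o$: the translate $w^k\langle h\rangle o$ projects to a uniformly bounded neighborhood of $w^ko$ in $\langle w\rangle o$ (this uses the weak-independence of $h$ and $w$, which falls out of the broken-path structure above). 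Since $d(w^ko,w^{k'}o)\asymp |k-k'|\cdot d(o,wo)$ is linear in $|k-k'|$, the projections of $w^k\langle h\rangle o$ and $w^{k'}\langle h\rangle o$ to $\langle w\rangle o$ are disjoint and far apart for large $|k-k'|$, and the contracting property of $\langle w\rangle o$ converts this into an $\mathcal{R}'$-bounded intersection of the two orbits themselves. The finitely many small-$|k-k'|$ cases are absorbed by enlarging $\mathcal{R}'$.

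The main technical obstacle is the uniform control of constants in claim (1): I must ensure that the threshold $N$ and the quasi-geodesic/contracting constants of the concatenated path depend only on $C$ and $\mathcal{R}$, not on $n,m\ge N$. This is exactly what the admissible-path machinery delivers, so rather than redo those calculations I would invoke the concatenation estimates directly. A minor secondary point in claim (2) is verifying that $\langle h\rangle o$ has bounded projection to $\langle w\rangle o$, but this is immediate from the same broken-path argument: any geodesic from a point of $\langle h\rangle o$ to $\langle w\rangle o$ fellow-travels at most one of the $h$-pieces of the axis, whose length is controlled by $\mathcal{R}$ and $C$.
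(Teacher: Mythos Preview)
The paper does not supply a proof of this lemma; it is quoted verbatim from \cite[Lemma~2.30]{WXY24} and used as a black box. So there is nothing to compare against here beyond noting that your sketch via admissible paths is indeed the standard route in that framework and is essentially how the cited reference proceeds.

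One small imprecision in your outline of claim (2): you assert that the projection of $\langle h\rangle o$ to the axis of $w=h^mg^n$ has diameter controlled only by $\mathcal{R}$ and $C$. But the first $h$-segment $[o,h^mo]$ of the axis of $w$ lies inside $\langle h\rangle o$, so that projection has diameter at least $d(o,h^mo)$, which grows with $m$. This does not actually break the argument, since for fixed $m,n\ge N$ the bound is still finite and the translates $w^k\langle h\rangle o$ eventually separate along $\langle w\rangle o$; however, your final sentence attributing the bound to $\mathcal{R}$ and $C$ alone is not quite right. For the delicate case $|k-k'|=1$ it is in fact cleaner to project to $\langle g\rangle o$ rather than to $\langle w\rangle o$: after translating, one must show $\langle h\rangle o$ and $g^n\langle h\rangle o$ have bounded intersection, and weak independence of $g,h$ places their projections to $\langle g\rangle o$ near $o$ and $g^no$ respectively, which are well separated once $n\ge N$.
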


Note that Lemma~\ref{lem: conjuagte contracting} implies that if $G$ acts isometrically on a metric space $X$ with two weakly-independent contracting elements, then there exists an infinite collection of pairwise weakly-independent contracting elements in $G$ on $X$.

The following lemma is the last piece of the puzzle required for the proof of Theorem~\ref{thm: simul contracting elements}. 

\begin{lemma}\label{lem: InfIndSet}
    Suppose that $G$ acts isometrically on geodesic metric spaces $X_1,\ldots, X_l$ with contracting property, respectively. Suppose that $\sc(G;\{1,\cdots,l\})\neq \varnothing$ and $\sc(G;\{1,\cdots,l-1\})$ contains an infinite independent subset. Then $\sc(G;\{1,\cdots,l\})$ contains an infinite independent subset.
\end{lemma}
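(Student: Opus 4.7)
The plan is to first produce two elements of $\sc(G;\{1,\ldots,l\})$ that are weakly-independent on each $X_k$, and then apply Lemma~\ref{lem: conjuagte contracting}(2) uniformly over the finitely many spaces $X_1,\ldots,X_l$ to bootstrap to an infinite pairwise weakly-independent family in $\sc(G;\{1,\ldots,l\})$.

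Fix $g\in\sc(G;\{1,\ldots,l\})$ (which exists by hypothesis) and let $\{h_i\}_{i\geq 1}\subset\sc(G;\{1,\ldots,l-1\})$ be the given infinite independent subset. First I would reduce: by Lemma~\ref{4independent} applied to $g$ and triples of $h_i$'s on each $X_k$ with $k\leq l-1$, at most two of the $h_i$'s can fail to be weakly-independent with $g$ on any single $X_k$; discarding at most $2(l-1)$ indices and reindexing, we may assume every $h_i$ is weakly-independent with $g$ on each $X_k$ for $k\leq l-1$.

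Next, set $a:=g$ and look for a second element of the form $b:=cgc^{-1}$ for a suitable $c\in G$. Any such $b$ automatically lies in $\sc(G;\{1,\ldots,l\})$ since $c$ acts by isometries on every $X_k$ and conjugation by an isometry preserves being contracting; moreover the pair $(g,cgc^{-1})$ is weakly-independent on $X_k$ precisely when $c$ avoids the elementary closure $E_k(g)$ of $\langle g\rangle$ on $X_k$ (the virtually cyclic stabilizer of the quasi-axis of $g$). The contracting property on each $X_k$ forces the image of $G$ in $\Isom(X_k)$ to be non-virtually-cyclic, so each $E_k(g)$ has infinite index in $G$; hence by B.H.\ Neumann's lemma on coverings by finitely many cosets, $G\neq\bigcup_{k=1}^{l}E_k(g)$, and we can pick $c\in G\setminus\bigcup_{k=1}^{l}E_k(g)$. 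A more constructive route exploits the $h_i$'s: pairwise weak-independence of the (reduced) family $\{h_i\}$ on $X_k$ for $k\leq l-1$ permits at most one $h_i$ inside $E_k(g)$, so infinitely many $h_i$'s avoid $\bigcup_{k\leq l-1}E_k(g)$; if some such $h_i$ also avoids $E_l(g)$, take $c:=h_i$, otherwise combine $h_i$ with an auxiliary contracting element on $X_l$ (furnished by contracting property) via the Extension Lemma to produce the desired $c$.

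Finally, with $a,b\in\sc(G;\{1,\ldots,l\})$ weakly-independent on each $X_k$, Lemma~\ref{lem: conjuagte contracting}(2) applied on $X_k$ gives a threshold $N_k$ such that for all $n,m\geq N_k$ the family $\{(b^ma^n)^jb(b^ma^n)^{-j}\}_{j\in\mathbb{Z}}$ is pairwise weakly-independent contracting on $X_k$. Taking $n,m\geq\max_k N_k$ (possible since $l$ is finite), this single family is simultaneously pairwise weakly-independent and contracting on every $X_k$, hence an infinite independent subset of $\sc(G;\{1,\ldots,l\})$. The main obstacle is the construction of $c$, specifically handling the $X_l$ coordinate: since the hypotheses do not control the action of the $h_i$'s on $X_l$ (they may all lie in $E_l(g)$, acting elliptically or sharing the axis of $g$), the argument must either fall back on Neumann's lemma or carefully combine an $h_i$ with an auxiliary contracting element on $X_l$ to ensure $c$ avoids every $E_k(g)$ simultaneously while keeping $cgc^{-1}$ contracting on all spaces.
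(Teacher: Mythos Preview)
Your approach is correct and genuinely different from the paper's. Both arguments reduce to producing a single weakly-independent pair in $\sc(G;\{1,\ldots,l\})$ and then invoke Lemma~\ref{lem: conjuagte contracting}(2) across all $X_k$ simultaneously, but the constructions of that pair diverge. The paper fixes $f\in\sc(G)$ and builds two separate infinite families of conjugates of $f$: one family $\{\theta_k\}$ pairwise independent on $X_1,\ldots,X_{l-1}$ (using an element from the given independent subset of $\sc(G;\{1,\ldots,l-1\})$) and another $\{\eta_k\}$ pairwise independent on $X_l$ (using an auxiliary contracting element on $X_l$); it then runs a Lemma~\ref{4independent}/pigeonhole count over $s>2l$ choices of each to extract one pair $(\theta_i,\eta_j)$ independent on every $X_k$. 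Your Neumann's-lemma route sidesteps this bookkeeping: once each $E_k(g)$ has infinite index, any $c\notin\bigcup_k E_k(g)$ immediately yields the pair $(g,cgc^{-1})$. This is slicker and, notably, never uses the hypothesis that $\sc(G;\{1,\ldots,l-1\})$ contains an infinite independent subset; it needs only $\sc(G;\{1,\ldots,l\})\neq\varnothing$ together with contracting property on each $X_k$. The cost is that you import the elementary-closure machinery (that $E_k(g)$ is a subgroup and that $c\notin E_k(g)$ characterizes weak-independence of $g$ and $cgc^{-1}$), which the paper does not develop; the paper stays entirely inside Lemmas~\ref{4independent} and~\ref{lem: conjuagte contracting}.

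One small inaccuracy: your claim that $E_k(g)$ is virtually cyclic can fail when the action is not proper (the kernel of $G\to\Isom(X_k)$, or more generally the coarse pointwise stabilizer of the axis, may be large). The infinite-index conclusion you actually need is still valid: by Lemmas~\ref{lem: conjuagte contracting} and~\ref{4independent} there is a contracting $g'$ on $X_k$ weakly-independent from $g$, and then no nonzero power of $g'$ lies in $E_k(g)$, which already forces $[G:E_k(g)]=\infty$. Your alternative ``constructive route'' for $c$ is only sketched and its $X_l$ step is not fully resolved, but since the Neumann route works this is harmless.
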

\begin{proof}
    By Lemma~\ref{lem: conjuagte contracting}, it suffices to show that $\sc(G;\{1,\cdots,l\})$ contains an independent pair of elements. By assumption, let $f\in\sc(G;\{1,\cdots,l\})$ and $\{g_1,\cdots,g_s\}\subset\sc(G;\{1,\cdots,l-1\})$ be an independent set for $s>2(l-1)$. 
    
    By Lemma~\ref{4independent}, there is $g\in\{g_1,g_2,g_3\}$ such that $f,g$ are weakly-independent on $X_1$, Suppose $g=g_1$. Then by Lemma~\ref{4independent} again, there is $g^\prime\in\{g_2,g_3,g_4\}$ such that $f,g^\prime$ are weakly-independent on $X_1$, Suppose $g^\prime=g_2$. Repeating this process, we can suppose $f,g_1,\cdots,g_{s-2}\subset\sc(G;\{1\})$ is an independent set. Then do this process on $X_2$. We can suppose $f,g_1,\cdots,g_{s-4}\subset\sc(G;\{2\})$ is an independent set, and thus $f,g_1,\cdots,g_{s-4}\subset\sc(G;\{1,2\})$ is an independent set. Repeating this process until on $X_{l-1}$, we can suppose that $f,g_1,\cdots,g_{s-2(l-1)}\subset\sc(G;\{1,\cdots,l-1\})$ is an independent set. Since $s>2(l-1)$, we obtain that there exists $g\in\sc(G;\{1,\cdots,l-1\})$ such that $f,g$ are weakly-independent on $X_1,\cdots,X_{l-1}$. By Lemma~\ref{lem: conjuagte contracting}, there is an $n$ such that $\{(f^ng^n)^kf(f^ng^n)^{-k}|k\in\mathbb{Z}\}$ is a collection of pairwise weakly-independent contracting elements on $X_1,\cdots,X_{l-1}$. 
    
    Since $G\curvearrowright X_l$ has contracting property, we can find a contracting element $h$ on $X_l$ such that $f,h$ are weakly-independent on $X_l$. Then by Lemma~\ref{lem: conjuagte contracting}, there is an $m$ such that $\{(f^mh^m)^kf(f^mh^m)^{-k}|k\in\mathbb{Z}\}$ is a collection of pairwise weakly-independent contracting elements on $X_l$. 
    
    Denote $\theta_k=(f^ng^n)^kf(f^ng^n)^{-k}$ and $\eta_k=(f^mh^m)^kf(f^mh^m)^{-k}$. Note that $\theta_k,\eta_k$ for $k\in\mathbb{Z}$ are all conjugations of $f$. Hence, they are all simultaneously contracting elements on $X_1,\cdots,X_l$.

    Now fix $\theta_1,\cdots,\theta_s$ and $\eta_1,\cdots,\eta_s$ for $s>2l$. For $\eta_1$, by Lemma~\ref{4independent}, there is $\theta\in\{\theta_1,\theta_2,\theta_3\}$ such that $\eta_1,\theta$ are weakly-independent on $X_1$. Suppose $\theta=\theta_1$. By Lemma~\ref{4independent}, there is $\theta^\prime\in\{\theta_2,\theta_3,\theta_4\}$ such that $\eta_1,\theta^\prime$ are weakly-independent on $X_1$. Suppose $\theta^\prime=\theta_2$. Repeating this process, we can suppose that $\eta_1,\theta_1,\cdots,\theta_{s-2}$ are pairwise weakly-independent on $X_1$. Then do this process on $X_2$. We can suppose $\eta,\theta_1,\cdots,\theta_{s-4}\subset\sc(G;\{2\})$ is an independent set, and then $\eta,\theta_1,\cdots,\theta_{s-4}\subset\sc(G;\{1,2\})$ is an independent set. Repeating this process until on $X_{l-1}$, we can suppose that $\eta,\theta_1,\cdots,\theta_{s-2(l-1)}\subset\sc(G;\{1,\cdots,l-1\})$ is an independent set. Do this process for $\eta_2,\cdots,\eta_s$. We get $s(s-2(l-1))$ pairs of the form $(\theta_i,\eta_j)$ where $1\leq i,j\leq s$ such that each pair gives an independent subset of $\sc(G;\{1,\cdots,l-1\})$.

    For $\theta_1$, by Lemma~\ref{4independent}, we can suppose that $\theta_1,\eta_1,\cdots,\eta_{s-2}$ are pairwise weakly-independent on $X_l$. Do this process for $\theta_2,\cdots,\theta_s$. We get $s(s-2)$ pairs of the form $(\theta_i,\eta_j)$ where $1\leq i,j\leq s$ such that each pair gives an independent subset of $\sc(G;\{l\})$.

    Note that the total number of pairs of the form $(\theta_i,\eta_j)$ where $1\leq i,j\leq s$ is $s^2$. Since
    $$s(s-2(l-1))+s(s-2)-s^2=s^2-2ls>0,$$
    by the Pigeonhole Principle, there must be a pair of the form $(\theta_i,\eta_j)$ $1\leq i,j\leq s$ such that $\theta_i,\eta_j$ are weakly-independent contracting elements on $X_1,\cdots,X_l$.
\end{proof}

\begin{proof}[Proof of Theorem~\ref{thm: simul contracting elements}]
    We will prove by induction.
    For $l=1$, the conclusion follows from Lemma~\ref{lem: conjuagte contracting}. Suppose that the conclusion holds for $l-1$. 
    
    By induction hypothesis, \hyperref[assumption]{Assumption (*)} holds. In particular, we can require that \hyperref[assumption]{Assumption (*)} holds for subscript $4s$ instead of $2s$.

    Consider the actions of $g_{1},\cdots,g_{4s}$ on $X_1$. By the Pigeonhole Principle, there are at least $2s$ elements, denoted by $g_{j_1},\cdots,g_{j_{2s}}$, in $\{g_1,\cdots,g_{4s}\}$ such that either $d(o_1,g_{j}o_1)>D$ or $d(o_1,g_{j}o_1)\leq D$ for any $j\in\{j_1,\cdots,j_{2s}\}$. Similarly, there are at least $2s$ elements, denoted by $f_{i_1},\cdots,f_{i_{2s}}$, in $\{f_1,\cdots,f_{4s}\}$ such that either $d(o_l,f_{i}o_l)>D$ or $d(o_l,f_{i}o_l)\leq D$ for any $i\in\{i_1\cdots, i_{2s}\}$. So we have the following four cases in total: 
        \begin{enumerate}
            \item $d(o_l,f_io_l)>D$ for any $i\in\{i_1,\cdots,i_{2s}\}$, $d(o_1,g_jo_1)>D$ for any $j\in\{j_1,\cdots,j_{2s}\}$;
            \item $d(o_l,f_io_l)>D$ for any $i\in\{i_1,\cdots,i_{2s}\}$, $d(o_1,g_jo_1)\leq D$ for any $j\in\{j_1,\cdots,j_{2s}\}$;
            \item $d(o_l,f_io_l)\leq D$ for any $i\in\{i_1,\cdots,i_{2s}\}$, $d(o_1,g_jo_1)>D$ for any $j\in\{j_1,\cdots,j_{2s}\}$;
            \item $d(o_l,f_io_l)\leq D$ for any $i\in\{i_1,\cdots,i_{2s}\}$, $d(o_1,g_jo_1)\leq D$ for any $j\in\{j_1,\cdots,j_{2s}\}$.
        \end{enumerate}

    Cases~(1)~(2)
    ~(3)~(4) satisfy the conditions of Lemmas~\ref{f>g>},~\ref{f>g<},~\ref{f<g>},~\ref{f<g<} respectively by substitute $f_1,\cdots,f_{2s},g_1,\cdots,g_{2s}$ by $f_{i_1},\cdots,f_{i_{2s}},g_{i_1},\cdots,g_{i_{2s}}$. Hence, it follows from Lemmas~\ref{f>g>},~\ref{f>g<},~\ref{f<g>},~\ref{f<g<} that $\sc(G;\{1,\cdots,l\})\neq \varnothing$. The remaining proof is completed by Lemma~\ref{lem: InfIndSet} and induction hypothesis.
\end{proof}

\subsection{Positive density of $\sc(G)$}\label{subsec: positive density sc}

In \cite{CW18}, Cumplido-Wiest used \cite[Theorem 2]{CW18} to show the positive density of pseudo-Anosov elements in mapping class groups \cite[Corollary 1]{CW18}. Here, we first give a lemma which is an analogue of \cite[Theorem 2]{CW18} in our situations. %The following lemma also plays an important role in the proof of existence of simultaneously hyperbolic elements in section~\ref{sec: SH}.

\begin{lemma}
\label{generalized extension lemma}
    Suppose that a group $G$ acts isometrically on finitely many geodesic metric spaces $X_1,\cdots,X_l$ with contracting property, respectively. Then there is a finite set $F\subset \sc(G)$ with the following property. For any $g\in G$, there is an $f\in F$ such that $gf\in \sc(G)$.
\end{lemma}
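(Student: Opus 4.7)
The plan is to follow the strategy of Cumplido-Wiest in \cite{CW18}, adapted to the multi-space setting by means of the SC construction (Convention~\ref{SC construction}). First I would invoke Theorem~\ref{thm: simul contracting elements} to extract a finite independent subset $F_0 = \{f_1, \ldots, f_N\} \subset \sc(G)$ with $N > 2l+1$, and replace each $f_i$ by a sufficiently high power so that the displacements $d(o_k, f_i o_k)$ on every space $X_k$ exceed the Extension Lemma threshold $D$ and satisfy the separation inequalities of Assumption~(*)(P3). The uniform constant $D$ provided by the Extension Lemma (Lemma~\ref{extensionlemma}) applied to triples from $F_0$ on each $X_k$ will play the role of the threshold throughout.

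The key observation is that the SC construction immediately disposes of the ``easy'' case in which $d(o_k, g o_k) > D$ for every $k$: applied to the single element $g$ with family $F_0$, it yields at least $N - 2l \geq 1$ indices $i$ for which $g f_i \in \sc(G)$. For a general $g$, let $J = \{ k : d(o_k, g o_k) \leq D\}$. To reduce the general case to the easy one, I would pre-multiply $g$ by a carefully chosen $f_{j_0} \in F_0$ so that $g f_{j_0}$ moves by more than $D$ on every $X_k$. On spaces $X_k$ with $k \in J$, any $f_{j_0}$ with $d(o_k, f_{j_0} o_k) > 2D$ works by the reverse triangle inequality; on spaces $X_k$ with $k \notin J$, one must avoid the few indices $j_0$ for which $d(o_k, f_{j_0} o_k)$ nearly equals $d(o_k, g o_k)$, a finite collection whose size is bounded via the sparse separation in (P3). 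Taking $N$ large enough, admissible $f_{j_0}$'s remain, and a second application of the SC construction furnishes $f_{i_0} \in F_0$ with $(g f_{j_0}) f_{i_0} \in \sc(G)$; one sets $f = f_{j_0} f_{i_0}$.

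I would define $F$ to be the finite union of $F_0$ with all products $f_{j_0} f_{i_0}$, $1 \leq j_0, i_0 \leq N$, that themselves lie in $\sc(G)$; a final application of the SC construction to the pair $(f_{j_0}, f_{i_0})$ guarantees that enough such products are in $\sc(G)$ to cover every displacement pattern of $g$. The main obstacle I expect is the simultaneous displacement bookkeeping: for each of the $2^l$ patterns $J \subseteq \{1, \ldots, l\}$ of ``small-displacement'' spaces of $g$, one must verify via triangle inequalities and pigeonhole counts, as in Lemmas~\ref{f>g>}--\ref{f<g<}, that the two-stage pre-multiplication simultaneously boosts the displacement on $J$ without shrinking it outside $J$ and produces a valid contracting product. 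Tuning $N$ and the separation exponents in (P3) so as to absorb the $O(l)$ forbidden indices on each space while leaving enough usable ones for the SC construction is the technical heart of the argument.
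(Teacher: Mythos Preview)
Your overall strategy is sound and would yield a correct proof, but it diverges from the paper's in the choice of pre-multiplier. The paper does \emph{not} range over different elements $f_{j_0}\in F_0$; instead it fixes the single element $f_1$ and uses $l+1$ of its powers $f_1^{j_0},f_1^{j_1},\ldots,f_1^{j_l}$ with $j_0=0<j_1<\cdots<j_l$ chosen so that $d(o_k,f_1^{j_r}o_k)>2D+j_{r-1}L_k$ for every $k$. For each $k$ the half-line $[0,\infty)$ is partitioned into $l+1$ intervals $I_{k,0},\ldots,I_{k,l}$, and since there are only $l$ values $d(o_k,go_k)$, pigeonhole yields an $r$ with $d(o_k,go_k)\notin I_{k,r}$ for all $k$; a short triangle-inequality check then gives $d(o_k,gf_1^{j_r}o_k)>D$ for all $k$, reducing to the easy case. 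The final set is simply $F=\bigcup_{r=0}^{l} f_1^{j_r}\cdot F'$ with $F'=\{f_1,\ldots,f_s\}$.

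Your route---using the $2D$-separation of the displacements of the $f_{j_0}$'s themselves to bound the number of ``bad'' indices by $l$---also works, but two points in your write-up are off. First, the $2^l$ case analysis on $J$ is unnecessary: for $k\in J$ every $f_{j_0}$ is good, and for $k\notin J$ at most one is bad, so a single uniform count suffices. Second, the inclusion $f_{j_0}f_{i_0}\in\sc(G)$ is not obtained via ``a final application of the SC construction to the pair $(f_{j_0},f_{i_0})$''; it comes for free from Lemma~\ref{lem: conjuagte contracting} once the powers defining the $f_i$ are taken above the threshold $N$ there (this is exactly how the paper shows $F\subset\sc(G)$). With those fixes your argument goes through; the paper's version just replaces your $N$-element displacement ladder by an $(l+1)$-rung ladder of powers of one element, which makes the pigeonhole and the description of $F$ considerably cleaner.
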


\iffalse
\noindent\textbf{Proof ideas:} At first, we give a quick proof for the simplest case $l=1$. Fix a basepoint $o_1\in X_1$ and any three weakly-independent contracting elements $h_1,h_2,h_3\in G$ on $X_1$. There exists a constant $D$ given by the Extension Lemma (cf. Lemma~\ref{extensionlemma}). Choose $F'=\{f_1,f_2,f_3\}$ satisfying $f_i\in \langle h_i\rangle$ and $d(o_1,f_io_1)>2D$ for $i=1,2,3$. Note that for any $g\in G$, either $d(o_1,go_1)$ or $d(o_1,gf_1o_1)$ is larger than $D$. Hence, the Extension Lemma shows that there exists $f\in F'$ such that either $gf$ or $gf_1f$ is contracting on $X_1$. In this case, $F=F'\cup f_1F'$ is the desired set.

When $l>1$, we use similar ideas but the arguments become a bit more complicated. Fix a basepoint $o_k\in X_k$ for $1\le k\le l$. We need to increase the cardinality of $F'$ and pick $l+1$ distinct powers $j_0=0<j_1<\cdots<j_l$ of $f_1$ such that for any $g\in G$, there exists $0\le r\le l$ such that $d(o_k,gf_1^{j_r}o_k)>D$  for each $1\le k\le l$. Then by applying \hyperref[SC construction]{SC construction} for $gf_1^{j_r}$ on $X_1,\cdots,X_l$ with $F'$, we get a simultaneously contracting element. In this general case, the desired set $F$ has the form $F=F'\cup \bigcup_{r=1}^{l}f_1^{j_r}F'$.
%But when we turn to the general case, for any $g\in G$, there are many cases of comparisons between $d(o_i,go_i)$ and $D$ for any $1\leq i\leq l$. We pick a simultaneously contracting element and pick some of its powers such that the gaps between the distances they move $o_i$ away are big, for any  $1\leq i\leq l$. After carefully classification discussion, we multiply one of these powers to $g$ to control the distances of this product moves every $o_i$ away. 
\fi

\begin{proof}
    By Theorem \ref{thm: simul contracting elements}, we can choose an independent set $\{h_1,\cdots,h_{s}\}\subset\sc(G)$ for $s>2l+1$. Fix a basepoint $o_k\in X_k$ for $1\le k\le l$. Let $D$ be the constant given by  \hyperref[SC construction]{SC construction}.

    %By Lemma~\ref{lem: conjuagte contracting}, there exists $N\gg 0$ such that $h_i^nh_j^m\in\sc(G)$ for any $n,m>N$ and $1\leq i,j\leq s$. 
    By Lemma~\ref{lem: conjuagte contracting}, there is an $N\gg0$ such that $h_i^nh_j^m\in\sc(G)$ for any $n,m>N$ and $1\leq i,j\leq s$. We pick $n_i>N$ for any $1\leq i\leq s$ such that 
    $$\min_{1\leq k\leq l}d(o_k,h_i^{n_i}o_k)\ge 2D.$$

    Let $f_i=h_i^{n_i}$ for $1\le i\le s$.
    Denote $L_k=d(o_k,f_1o_k)$ for $1\leq k\leq l$ for short. %Pick $m>1$ such that $d(o_k,f_1^mo_k)>2D+j_{l-1}L_k$ for any $1\leq k\leq l$.

    Then pick $j_0=0<j_1<\cdots<j_{l}$ inductively such that 
    $$d(o_k,(f_1)^{j_r}o_k)>2D+j_{r-1}L_k$$ for any $1\leq k,r\leq l$. 
    
\iffalse
    See Figure~\ref{big gap} for an illustration.

    \begin{figure}
        \centering
        \begin{tikzpicture}
    \draw (-5em,0em) -- (2em,0em);
    \draw (6em,0em) -- (30em,0em);
    \fill (0em,0em) circle (0.2em);
    \fill (11em,0em) circle (0.2em);
    \fill (15em,0em) circle (0.2em);
    \fill (25em,0em) circle (0.2em);
    \node at (4em,0) {$\cdots$};
    \node[yshift=-1em] at (0em,0) {\tiny $0$};
    \node[yshift=1em] at (11em,0) {\tiny $d(o_k,(h_1^{n_1})^{j_{r-1}}o_k)$};
    \node[yshift=-1em] at (15em,0) {\tiny $j_{r-1}d(o_k,h_1^{n_1}o_k)$};
    \node[yshift=-1em] at (25em,0) {\tiny $d(o_k,(h_1^{n_1})^{j_r}o_k)$};
    \draw (15em,0.5em) -- (15em,1em);
    \draw (25em,0.5em) -- (25em,1em);
    \draw (15em,1em) -- (25em,1em);
    \node[yshift=1em] at (20em,1em) {\tiny $>2D$};
\end{tikzpicture}
        \caption{For any $1\leq k\leq l$ and any $2\leq r\leq l$, $d(o_k,(h_1^{n_1})^{j_r}o_k)>2D+j_{r-1}d(o_k,h_1^{n_1}o_k)>2D+d(o_k,(h_1^{n_1})^{j_{r-1}}o_k)$.}
        \label{big gap}
    \end{figure}
 \fi

    Let $F^\prime=\{f_1,\cdots,f_{s}\}$ and 
    $$F=\bigcup_{r=0}^{l} f_1^{j_r}\cdot F^\prime.$$ By construction, $F^\prime\subset F\subset \sc(G)$.

    For any $g\in G$, we aim to find the desired element $f \in F$ according to these values $d(o_1,go_1), \cdots, d(o_l,go_l)$. 
    For any $1\leq k\leq l$, denote $I_{k,0}=[0,D]$, $I_{k,r}=(D+j_{r-1}L_k,D+j_rL_k]$ for $1\leq r\leq l-1$ and $I_{k,l}=(D+j_{l-1}L_k,+\infty)$. Note that $d_k\in \bigsqcup_{r=0}^l I_{k,r}=[0,+\infty)$ for any $1\leq k\leq l$.
    By the Pigeonhole Principle, there exists $0\leq r\leq l$ such that $d(o_k,go_k)\notin I_{k,r}$ for any $1\leq k\leq l$. See Table~\ref{table: classification} for an illustration.

\begin{table}[h]
    \centering
\begin{tabular}{|c|c|c|c|c|c|}
    \hline
    $\in I_{k,r}$ & $r=0$ & $r=1$ & $r=2$ & $\cdots$ & $r=l$\\
    \hline
    $k=1$ && $d_1$ &&&\\
    \hline
    $k=2$ &&&&& $d_2$\\
    \hline
    $\vdots$ &&&&&\\
    \hline
    $k=l$ & $d_l$ &&&&\\
    \hline
\end{tabular}
\caption{Each $d_k\coloneqq d(o_k,go_k)$ for $1\leq k\leq l$ lies in one of the $l+1$ boxes in $k$-th row. By the Pigeonhole Principle, there is an empty column.}
\label{table: classification}
\end{table}

    \begin{claim}
        $d(o_k,gf_1^{j_r}o_k)>D$ for any $1\le k\le l$.
    \end{claim}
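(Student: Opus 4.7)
The plan is a direct application of the two-sided triangle inequality to the triple $o_k, g o_k, g f_1^{j_r} o_k$. Since the action is isometric, $d(g o_k, g f_1^{j_r} o_k) = d(o_k, f_1^{j_r} o_k)$, so
$$d(o_k, g f_1^{j_r} o_k) \ge \bigl|\, d(o_k, g o_k) - d(o_k, f_1^{j_r} o_k)\,\bigr|.$$
The two key quantitative inputs are complementary estimates on $d(o_k, f_1^{j_r} o_k)$: the inductive choice of the exponents guarantees $d(o_k, f_1^{j_r} o_k) > 2D + j_{r-1} L_k$ for $r \ge 1$, while iterating the triangle inequality $j_r$ times along the $f_1$-orbit gives the crude upper bound $d(o_k, f_1^{j_r} o_k) \le j_r L_k$. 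Together these sandwich $d(o_k, f_1^{j_r} o_k)$ into a window disjoint from the forbidden band $I_{k,r}$, with a margin of at least $D$ on either side.

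I would then split into cases on $r$. The case $r = 0$ is immediate, since $f_1^{j_0}$ is the identity and the hypothesis $d(o_k, g o_k) \notin I_{k,0} = [0, D]$ gives $d(o_k, g o_k) > D$ directly. For $1 \le r \le l-1$, the hypothesis $d(o_k, g o_k) \notin (D + j_{r-1} L_k,\, D + j_r L_k]$ forks into a \emph{low} sub-case $d(o_k, g o_k) \le D + j_{r-1} L_k$ and a \emph{high} sub-case $d(o_k, g o_k) > D + j_r L_k$. In the low sub-case I would use $d(o_k, g f_1^{j_r} o_k) \ge d(o_k, f_1^{j_r} o_k) - d(o_k, g o_k)$, whereupon the lower bound $2D + j_{r-1} L_k$ cancels the $j_{r-1} L_k$ part and leaves strictly more than $D$. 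In the high sub-case I would use the reversed estimate $d(o_k, g f_1^{j_r} o_k) \ge d(o_k, g o_k) - d(o_k, f_1^{j_r} o_k)$, whereupon the upper bound $j_r L_k$ cancels the $j_r L_k$ part and again leaves strictly more than $D$. The boundary case $r = l$, where the forbidden band is $(D + j_{l-1} L_k, +\infty)$, reduces to the low sub-case applied with $r = l$.

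The only real obstacle is bookkeeping: one must line up the partition $[0, \infty) = \bigsqcup_{r=0}^{l} I_{k,r}$ with the crude upper bound $j_r L_k$ on orbit displacement and the carefully engineered lower bound $2D + j_{r-1} L_k$ from the inductive choice of $j_r$, so that the triangle inequality in each sub-case produces precisely the slack of $D$ demanded by the claim. Once these three quantities are aligned, every case collapses to a single subtraction, and the conclusion holds uniformly in $k$.
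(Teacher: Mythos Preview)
Your proposal is correct and follows essentially the same route as the paper: both split on $r$ (with $r=0$ trivial and $r=l$ reducing to the ``low'' sub-case), and for $1\le r\le l-1$ both use the reverse triangle inequality together with the lower bound $d(o_k,f_1^{j_r}o_k)>2D+j_{r-1}L_k$ in the low sub-case and the upper bound $d(o_k,f_1^{j_r}o_k)\le j_r L_k$ in the high sub-case. The only cosmetic difference is that the paper writes the high sub-case directly as $d(o_k,gf_1^{j_r}o_k)\ge d(o_k,go_k)-j_r\,d(o_k,f_1 o_k)$ rather than first naming the crude orbit bound.
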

    \begin{proof}[Proof of Claim]
        If $r=0$, then it follows from the construction of $I_{k,r}$ that $d(o_k,go_k)>D$ for any $1\le k\le l$. If $r=l$, then $d(o_k,go_k)\le D+j_{l-1}L_k$ for any $1\le k\le l$. By triangle inequality, $$d(o_k,gf_1^{j_l}o_k)\ge d(o_k,f_1^{j_l}o_k)-d(o_k,go_k)> 2D+j_{l-1}L_k-(D+j_{l-1}L_k)=D.$$ 

        Suppose now $1\le r\le l-1$. For those $1\leq k\leq l$ such that $d(o_k,go_k)>D+j_rL_k$, 
        by triangle inequality, we have 
$$d\left( o_k,gf_1^{j_{r}} o_k \right)\geq d\left( o_k,go_k \right)-j_r d\left( o_k,f_1 o_k \right)>D+j_{r}L_k-j_{r}L_k=D.$$ For those $1\leq k\leq l$ such that $d(o_k,go_k)\leq D+j_{r-1}L_k$, 
by triangle inequality, we have

$$d\left( o_k,gf_1^{j_{r}} o_k \right)\geq d\left( o_k,f_1^{j_{r}} o_k \right)-d\left( o_k,go_k \right)>(2D+j_{r-1}L_k)-(D+j_{r-1}L_k)=D.$$ 
    \end{proof}

    By the above Claim, we can apply \hyperref[SC construction]{SC construction} for $gf_1^{j_r}$ on $X_1,\cdots,X_l$ with $F'$ and finally get an $f\in f_1^{j_r}F'$ such that $gf\in \sc(G)$.

\end{proof}

The following lemma can be extracted from the proof of \cite[Corollary~1]{CW18}. For a group $G$ generated by $S$, we use $|g|_S$ to denote the smallest number of letters in $S\cup S^{-1}$ to represent $g$. Then the word metric $d_S$ on $G$ is defined by $d_S(g,h)=|g^{-1}h|_S$. 
\begin{lemma}\label{lem: PosDes}
    Let $G$ be a group finitely generated by $S$. Let $E\subset G$ be a subset. Suppose there is a finite subset $F\subset G$ with the following property: For any $g\in G$, there is an $f\in F$ such that $gf\in E$. Then there exists a constant $c=c(S)\in (0,1)$ such that $$\frac{\sharp  (S^{\le n}\cap E)}{\sharp S^{\le n}}>c$$ for any sufficiently large $n$.
\end{lemma}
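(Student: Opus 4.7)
The plan is to convert the hypothesis into an injection from the word-ball $S^{\le n}$ into a product $(E\cap S^{\le n+M})\times F$, where $M:=\max_{f\in F}|f|_S$, and then compare consecutive word-balls using the subadditivity of word length. This is the same template used by Cumplido--Wiest in \cite{CW18}, adapted to the purely combinatorial setting we have here.

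First, for every $g\in G$ pick (by the hypothesis on $F$) some element $f(g)\in F$ with $gf(g)\in E$, and define
$$\Phi\colon G\longrightarrow E\times F,\qquad \Phi(g)=\bigl(gf(g),\,f(g)\bigr).$$
The map $\Phi$ is injective, since $g$ can be recovered as $(gf(g))\cdot f(g)^{-1}$. If $g\in S^{\le n}$, then $|gf(g)|_S\le n+M$, so $\Phi$ restricts to an injection $S^{\le n}\hookrightarrow (E\cap S^{\le n+M})\times F$. This yields
$$\sharp S^{\le n}\;\le\;\sharp\bigl(E\cap S^{\le n+M}\bigr)\cdot\sharp F.$$

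Second, because any word of length at most $n+M$ can be split after position $n$, one has $S^{\le n+M}\subseteq S^{\le n}\cdot S^{\le M}$, and consequently
$$\sharp S^{\le n+M}\;\le\;\sharp S^{\le n}\cdot\sharp S^{\le M}.$$
Combining the two estimates and relabelling $m=n+M$ gives, for every $m\ge M$,
$$\frac{\sharp (S^{\le m}\cap E)}{\sharp S^{\le m}}\;\ge\;\frac{\sharp S^{\le m-M}}{\sharp F\cdot \sharp S^{\le m}}\;\ge\;\frac{1}{\sharp F\cdot \sharp S^{\le M}}.$$
Taking $c=\frac{1}{2\sharp F\cdot \sharp S^{\le M}}\in(0,1)$ gives the desired strict inequality for all sufficiently large $n$, and $c$ depends only on $S$ (through $F$ and $M$, both of which are determined once the hypothesis is fixed).

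There is no genuine obstacle here; the only thing to be a little careful about is to write the injection in the right direction (from $S^{\le n}$ to the enlarged ball $S^{\le n+M}\cap E$, not the other way around) so that the inequality between ball sizes goes the useful way, and to remember to enlarge the denominator to pass from the weak inequality $\ge$ to a strict $>$. Everything else is purely formal.
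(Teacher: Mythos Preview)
Your proof is correct. The paper's argument is a close variant: instead of mapping $S^{\le n}$ forward into $(E\cap S^{\le n+M})\times F$ and then shifting indices, the paper first pulls each $g\in S^{\le n}$ back along a geodesic by distance $M$ to a point $g'$ with $|g'|_S=|g|_S-M$, and then applies the hypothesis to $g'$; the resulting element $g'f$ lands in $E\cap S^{\le n}$ directly and satisfies $d_S(g,g'f)\le 2M$, yielding the covering $S^{\le n}\subseteq\bigcup_{h\in E\cap S^{\le n}}B_{2M}(h)$ and hence $c=1/\sharp S^{\le 2M}$ with no relabelling. Both routes encode the same density fact---every group element is within word-distance $M$ of $E$---and differ only in bookkeeping: your injection argument is very clean, while the paper's pull-back trick avoids the separate ball-growth comparison $\sharp S^{\le n+M}\le \sharp S^{\le n}\cdot\sharp S^{\le M}$.
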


\begin{proof}
     Denote $M=\max_{f\in F}\abs{f}_S>0$. Suppose $n>2M$.

    \begin{claim}
        For any $g\in S^{\leq n}$, there exists $h\in S^{\leq n}\cap E$ such that $d_S(g,h)\leq 2M$.
    \end{claim}
    \begin{proof}[Proof of Claim]
        By assumption, there exists $f\in F$ such that $gf\in E$. If $\abs{g}_S\leq M$, then it follows from $d_S\left(g,gf\right)=\abs{f}_S\leq M$ that $\abs{gf}_S\leq\abs{g}_S+\abs{f}_S\leq 2M<n$. Therefore $h=gf$ is as desired.

        If $\abs{g}_S>M$, then we choose a geodesic $\gamma$ from $1$ to $g$ in the Cayley graph $\mathcal{G}(G,S)$ and $g^\prime\in\gamma$ such that $d_{S}\left(g,g^\prime\right)=M$. See Figure~\ref{fig: positive density} for an illustration. By assumption, there exists $f\in F$ such that $g^\prime f\in E$.  Moreover, we have $\abs{g^\prime f}_S\leq\abs{g^\prime}_S+\abs{f}_S= \abs{g}_S-M+M=\abs{g}_S\leq n$. Notice that $d_S\left(g,g^\prime f\right)\leq d_S\left(g,g^\prime\right)+d_S\left(g^\prime,g^\prime f\right)\leq M+\abs{f}_S\leq 2M$. Therefore, $h=g^\prime f$ is as desired.
        \begin{figure}[h]
            \centering
            \begin{tikzpicture}[x=0.75pt,y=0.75pt,yscale=-1,xscale=1,scale=0.6]
            \draw  [fill=gray  ,fill opacity=0.5 ]  (339,221) .. controls (358,205) and (448,189) .. (456,222) .. controls (464,255) and (458,275) .. (469,305) .. controls (480,335) and (335,338) .. (315,308) .. controls (295,278) and (320,237) .. (339,221) -- cycle  ;
            \draw  [dashed]  (251.24,293.07) -- (291.64,69.97) ;
            \draw [shift={(292,68)}, rotate = 100.27] [color={rgb, 255:red, 0; green, 0; blue, 0 }  ][line width=0.75]    (10.93,-3.29) .. controls (6.95,-1.4) and (3.31,-0.3) .. (0,0) .. controls (3.31,0.3) and (6.95,1.4) .. (10.93,3.29)   ;
            \draw    (404,139) -- (251.24,293.07) ;
            \draw    (348,196) -- (429,238) ;
            \draw  [dashed]  (404,139) -- (429,238) ;
            \draw  [dashed] (158.27,84.94) .. controls (202.16,64.29) and (252.75,57.69) .. (303.28,69.45) .. controls (424.38,97.63) and (499.25,220.6) .. (470.51,344.1) .. controls (467.84,355.58) and (464.37,366.65) .. (460.18,377.28) ;  
            \draw   (229.75,64.88) .. controls (253.74,62.33) and (278.52,63.69) .. (303.28,69.45) .. controls (419.47,96.49) and (493.1,210.77) .. (473.51,329.07) ;  
            \fill (251.24,293.07) circle (0.25em);
            \fill (404,139) circle (0.25em);
            \fill (348,196) circle (0.25em);
            \fill (429,238) circle (0.25em);
            \draw (234,294.4) node [anchor=north west][inner sep=0.75pt]    {{\footnotesize $1$}};
            \draw (250,160) node [anchor=north west][inner sep=0.75pt]    {{\footnotesize $n$}};
            \draw (382,127) node [anchor=north west][inner sep=0.75pt]    {{\footnotesize $g$}};
            \draw (327,176.4) node [anchor=north west][inner sep=0.75pt]    {{\footnotesize $g^{\prime }$}};
            \draw (418,240) node [anchor=north west][inner sep=0.75pt]    {{\footnotesize $g^{\prime } f$}};
            \draw (347,150) node [anchor=north west][inner sep=0.75pt]    {{\footnotesize $M$}};
            \draw (354,226) node [anchor=north west][inner sep=0.75pt]    {{\footnotesize $\leq M$}};
            \draw (292,217) node [anchor=north west][inner sep=0.75pt]    {{\footnotesize $\gamma $}};
            \draw (350,290) node [anchor=north west][inner sep=0.75pt]    {{\scriptsize $S^{\leq n}\cap E$}};
        \end{tikzpicture}
        \caption{$|g|_S>M$.}
        \label{fig: positive density}
    \end{figure}
    \end{proof}

    The above Claim shows that
    $$S^{\leq n}\subset\bigcup_{h\in S^{\leq n}\cap E}B_{2M}(h).$$
    Counting their cardinalities gives that
    $$\sharp S^{\leq n}\leq \sharp\left(S^{\leq n}\cap E\right)\cdot \sharp S^{\leq 2M},$$
    i.e.
    $$\frac{\sharp  (S^{\le n}\cap E)}{\sharp S^{\le n}}\geq\frac{1}{\sharp S^{\leq 2M}}.$$

    By setting $c=c(S)=\frac{1}{\sharp S^{\leq 2M}}$, we have
    $$\frac{\sharp  (S^{\le n}\cap E)}{\sharp S^{\le n}}\geq c,$$
    for any $n>2M$.
\end{proof}

\begin{corollary}[Positive density of simultaneously contracting elements]\label{PosDes sc}
    Let $G$ be a group acting isometrically on finitely many geodesic metric spaces $X_1,\ldots, X_l$ with contracting property, respectively. If $G$ is finitely generated by $S$, then there exists a constant $c=c(S)\in (0,1)$ such that $$\frac{\sharp  (S^{\le n}\cap \mathcal{SC}(G))}{\sharp S^{\le n}}>c$$ for any sufficiently large $n$.
\end{corollary}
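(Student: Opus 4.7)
The plan is to combine the two main technical lemmas of this subsection, namely Lemma~\ref{generalized extension lemma} and Lemma~\ref{lem: PosDes}, in a very direct way. Indeed, Lemma~\ref{generalized extension lemma} is tailored precisely so that the hypothesis of Lemma~\ref{lem: PosDes} is satisfied with $E = \mathcal{SC}(G)$, so the corollary should follow almost immediately.

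More concretely, I would first invoke Lemma~\ref{generalized extension lemma} to produce a \emph{finite} set $F \subset \mathcal{SC}(G)$ with the property that for every $g \in G$, there exists $f \in F$ such that $gf \in \mathcal{SC}(G)$. This step uses the assumption that each action $G \curvearrowright X_i$ has contracting property (which in turn was used through Theorem~\ref{thm: simul contracting elements} and the SC construction to produce $F$). Note that the construction of $F$ does not depend on the generating set $S$ at all; $F$ is fixed purely in terms of the actions of $G$ on the $X_i$.

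Next, with this finite $F$ in hand, I would set $E = \mathcal{SC}(G)$ and apply Lemma~\ref{lem: PosDes} with the given finite generating set $S$ of $G$. The hypothesis of Lemma~\ref{lem: PosDes} is exactly the conclusion of Lemma~\ref{generalized extension lemma}, so the lemma produces a constant $c = c(S) \in (0,1)$, depending only on $M = \max_{f \in F}|f|_S$ (explicitly one may take $c = 1/\sharp S^{\leq 2M}$), such that
$$
\frac{\sharp\,(S^{\leq n} \cap \mathcal{SC}(G))}{\sharp S^{\leq n}} > c
$$
for all sufficiently large $n$ (in fact for all $n > 2M$). This is exactly the statement of the corollary.

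There is no real obstacle here — the two lemmas were designed to fit together, and the corollary is their formal consequence. The only thing worth double-checking is that the constant $c$ lies strictly in $(0,1)$: strict positivity follows since $S^{\leq 2M}$ is finite, while $c<1$ is automatic once $S \neq \varnothing$ and $n$ is large enough that $S^{\leq n} \not\subset \mathcal{SC}(G)$ (for instance, the identity lies in $S^{\leq n}$ but not in $\mathcal{SC}(G)$, as simultaneously contracting elements are in particular of infinite order).
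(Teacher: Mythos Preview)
Your proposal is correct and follows exactly the paper's own proof, which consists of the single sentence ``Combining Lemma~\ref{generalized extension lemma} and Lemma~\ref{lem: PosDes} completes the proof.'' Your additional remarks about the explicit form of $c$ and why $c\in(0,1)$ are accurate elaborations but not needed for the argument.
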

\begin{proof}
    Combining Lemma~\ref{generalized extension lemma} and Lemma~\ref{lem: PosDes} completes the proof.
\end{proof}

We will use the same method to prove the positive density of simultaneously hyperbolic elements in subsection~\ref{subsec: PosiDens}.

\section{Constructions of simultaneously hyperbolic elements}\label{sec: SH}

In this section, we answer Question~\ref{IntroQue} and give the positive density of simultaneously hyperbolic elements.

\subsection{Existence of simultaneously hyperbolic elements}\label{subsec: SH}

The main result of this subsection is the following theorem, which gives an affirmative answer to Question~\ref{IntroQue}. Recall from Theorem \ref{Thm: Classification} the classification of group actions on Gromov-hyperbolic spaces.

\begin{theorem}
\label{simulhypiso}
    Let $G$ be a group acting non-elliptically and non-horocyclically on finitely many Gromov-hyperbolic spaces $X_1,\ldots, X_l$. Then the set $$\mathcal{SH}(G)=\{g\in G: g \text{ is simultaneously hyperbolic on each } X_i \text{ for } 1\le i\le l\}$$ is not empty.
\end{theorem}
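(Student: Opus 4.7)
The plan is to split the problem by the type of each action and then combine two tools: Theorem~\ref{thm: simul contracting elements} to handle all general-type factors together, and the Busemann quasimorphisms from Proposition~\ref{Prop: BusemannQM} with Lemma~\ref{busemann homo q.m.} to detect hyperbolicity on focal and lineal factors as a non-vanishing condition on a homogeneous quasimorphism. Using Theorem~\ref{Thm: Classification} and the non-elliptic, non-horocyclic hypothesis, I partition $\{1,\ldots,l\}=A\sqcup B\sqcup C$ according to whether $G\curvearrowright X_i$ is of general type, focal, or lineal respectively. For each $i\in C$ the orientation defines a homomorphism $\varepsilon_i\colon G\to\mathbb{Z}/2\mathbb{Z}$, and I pass to the finite-index subgroup $G_0:=\bigcap_{i\in C}\ker\varepsilon_i$; then $G_0\curvearrowright X_i$ is orientable lineal for $i\in C$, focal for $i\in B$, and still of general type for $i\in A$. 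A simultaneously hyperbolic element of $G_0$ is also simultaneously hyperbolic for $G$ because hyperbolicity of an element is intrinsic to its action on $X_i$, so I may work inside $G_0$.

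For each $i\in B\cup C$ the subgroup $G_0$ fixes a boundary point $\xi_i\in\partial X_i$, yielding a homogeneous quasimorphism $\beta_i\colon G_0\to\mathbb{R}$ with $\beta_i(g)\ne 0$ if and only if $g$ is hyperbolic on $X_i$; I pick $h_i\in G_0$ hyperbolic on $X_i$ for each such $i$. If $A\ne\varnothing$, each general-type action has contracting property by Example~\ref{exam: contracting}(1), so Theorem~\ref{thm: simul contracting elements} yields a finite independent subset $F_A=\{f_1,\ldots,f_s\}\subset\mathcal{SC}(G_0;A)$ of arbitrary prescribed size consisting of elements hyperbolic on every $X_i$, $i\in A$. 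Writing $B\cup C=\{j_1,\ldots,j_k\}$, I then search for a group element of the shape
\[
g=\phi_0\,h_{j_1}^{tm_1}\,\phi_1\,h_{j_2}^{tm_2}\,\phi_2\cdots h_{j_k}^{tm_k}\,\phi_k,
\]
where each $\phi_p$ is drawn from $F_A$ (and omitted when $A=\varnothing$), $m_1,\ldots,m_k\in\mathbb{Z}$, and $t\in\mathbb{Z}_{>0}$ is a common scaling factor. The padding elements $\phi_p$ are selected by iterated applications of the SC construction (Convention~\ref{SC construction}) to the inner drop-ins $h_{j_r}^{tm_r}$ acting on the family $\{X_i\}_{i\in A}$; for all sufficiently large $|tm_r|$ this guarantees that $g$ is simultaneously contracting on every $X_i$ with $i\in A$.

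For the indices in $B\cup C$, homogeneity and the defect estimate for $\beta_{j_r}$ give
\[
\beta_{j_r}(g)=t\sum_{s=1}^{k}m_s\,\beta_{j_r}(h_{j_s})+\sum_{p=0}^{k}\beta_{j_r}(\phi_p)+O\bigl(\Delta(\beta_{j_r})\bigr).
\]
The diagonal coefficient $\beta_{j_r}(h_{j_r})\ne 0$ makes each vanishing locus $\{\sum_s m_s\beta_{j_r}(h_{j_s})=0\}$ a proper hyperplane in $\mathbb{R}^k$, so I may pick integer $(m_1,\ldots,m_k)$ outside the union of these $k$ hyperplanes; then for $t$ large enough the dominant linear term beats the bounded padding $\sum_p\beta_{j_r}(\phi_p)$ and the defect $\Delta(\beta_{j_r})$, forcing $\beta_{j_r}(g)\ne 0$ for every $r$. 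The main obstacle I anticipate is coupling the SC-construction step with the hyperplane-avoidance step: the choice of $\phi_p$ depends on the inner factors $h_{j_r}^{tm_r}$ and could vary with $t$. However, the $\phi_p$'s come from the fixed finite pool $F_A$, so only finitely many tuples $(\phi_0,\ldots,\phi_k)$ are possible, the sums $\sum_p\beta_{j_r}(\phi_p)$ take only finitely many values, and a pigeonhole on $t$ lets me fix one admissible tuple that is reused for an unbounded sequence of $t$'s; taking such a $t$ large enough simultaneously secures contracting behavior on $A$ and nonvanishing Busemann values on $B\cup C$, completing the construction.
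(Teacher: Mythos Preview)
Your overall strategy---partitioning by action type, passing to a finite-index subgroup to make the lineal actions orientable, and then combining the contracting-element machinery on the general-type factors with Busemann quasimorphisms on the lineal/focal factors---is exactly the paper's approach. The hyperplane-avoidance argument for the indices in $B\cup C$ is correct and amounts to an inline proof of Lemma~\ref{lem: FinQH}; the final pigeonhole on the finitely many padding tuples is also sound in principle.

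The gap is in the ``iterated SC construction'' step, and it has two layers. First, Convention~\ref{SC construction} requires each input $h$ to satisfy $d(o_i,h\,o_i)>D$ on every space in the family. Your inputs are the $h_{j_r}^{tm_r}$ with $j_r\in B\cup C$, and $h_{j_r}$ is only assumed hyperbolic on $X_{j_r}$; on a general-type space $X_i$ with $i\in A$ it may well be elliptic, so $d(o_i,h_{j_r}^{tm_r}o_i)$ can stay bounded no matter how large $|tm_r|$ is. Second, even if that displacement condition held, the SC construction only outputs elements of the form $fh$ (or $hf$) that are contracting; it says nothing about a long alternating word $\phi_0\,h_{j_1}^{tm_1}\,\phi_1\cdots h_{j_k}^{tm_k}\,\phi_k$ being contracting. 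An honest iteration would require you to control the displacement of every partial product $w_{r-1}h_{j_r}^{tm_r}$ on every $X_i$ with $i\in A$, and nothing in your setup guarantees that.

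The clean fix is to drop the interleaving and invoke Lemma~\ref{generalized extension lemma} once: it supplies a finite set $F\subset\mathcal{SC}(G_0;A)$ such that for \emph{every} $g\in G_0$ (with no displacement hypothesis) some $gf$ with $f\in F$ lies in $\mathcal{SC}(G_0;A)$. Apply this to $g=h_{j_1}^{tm_1}\cdots h_{j_k}^{tm_k}$ and then run your pigeonhole on $f\in F$ as $t\to\infty$. After this repair the argument coincides with the paper's, which packages the same two moves as Corollary~\ref{Cor: SH} and Corollary~\ref{lem: finiteunion q.m.}.
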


Since hyperbolic elements are typical contracting elements, Theorem \ref{thm: simul contracting elements} implies the above theorem in the case that each $G$-action on $X_1,\ldots,X_l$ is of general type. To complete the proof, we also need to deal with those lineal or focal actions. We first prove a useful lemma in this process. Recall from Definition \ref{Def: QM} the definition of homogeneous quasimorphisms.

\begin{lemma}
\label{lem: FinQH}
    Let $\beta_1,\cdots,\beta_l$ be non-zero homogeneous quasimorphisms on $G$. Then there exists at least one element $g\in G$ such that $\beta_i(g)\neq 0$ for all $1\le i\le l$. 
\end{lemma}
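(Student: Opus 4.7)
The plan is to combine elements that individually witness the non-vanishing of each $\beta_i$ into a single element on which all $\beta_i$ are simultaneously non-zero, exploiting the defect inequality together with homogeneity to make the error term negligible.

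First, for each $1 \le i \le l$, since $\beta_i$ is non-zero, pick $g_i \in G$ with $\beta_i(g_i) \ne 0$. The element I would ultimately produce has the form $g = g_1^{n_1} g_2^{n_2} \cdots g_l^{n_l}$ for well-chosen integers $n_j$. The key estimate, obtained by iterating the defect inequality $l-1$ times and using homogeneity $\beta_i(g_j^{n_j}) = n_j \beta_i(g_j)$, is
$$\Bigl|\beta_i\bigl(g_1^{n_1} \cdots g_l^{n_l}\bigr) - \sum_{j=1}^l n_j \beta_i(g_j)\Bigr| \le (l-1)\Delta(\beta_i)$$
for every $1 \le i \le l$.

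Next I would choose the exponents. For each $i$, the linear functional $v \mapsto \sum_j v_j \beta_i(g_j)$ on $\mathbb{R}^l$ is not identically zero, because its $i$-th coefficient $\beta_i(g_i)$ is non-zero; so its kernel $H_i$ is a proper hyperplane in $\mathbb{R}^l$. A finite union of proper linear subspaces cannot cover $\mathbb{Z}^l$ (for example, the polynomial $\prod_i \sum_j x_j \beta_i(g_j)$ is not identically zero, hence takes a non-zero value at some integer point), so I can select $(m_1, \ldots, m_l) \in \mathbb{Z}^l$ with $c_i := \sum_{j} m_j \beta_i(g_j) \ne 0$ for every $i$. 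Setting $n_j = N m_j$ and using homogeneity of the linear term, the displayed inequality becomes
$$\bigl|\beta_i(g) - N c_i\bigr| \le (l-1)\Delta(\beta_i), \qquad g := g_1^{Nm_1} \cdots g_l^{Nm_l}.$$
Choosing $N$ large enough that $N |c_i| > (l-1)\Delta(\beta_i)$ for all $i$ simultaneously (possible since there are only finitely many $i$) forces $\beta_i(g) \ne 0$ for every $i$, completing the proof.

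The only real obstacle is step two, the simultaneous avoidance of the hyperplanes $H_i$: one must verify that the non-vanishing $\beta_i(g_i) \ne 0$ really does imply the hyperplane $H_i$ is proper, and then invoke the standard fact that $\mathbb{Z}^l$ escapes any finite union of proper hyperplanes. Everything else is a direct consequence of the defect bound and homogeneity, with the free scaling parameter $N$ absorbing the bounded error.
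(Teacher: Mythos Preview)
Your proof is correct. The iterated defect bound, the hyperplane avoidance, and the final scaling all work exactly as you describe; the only point needing a word of justification is that a non-zero real polynomial cannot vanish on all of $\mathbb{Z}^l$, and you supply that.

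The paper takes a different route: it argues by induction on $l$, using the inductive hypothesis twice to obtain $g$ with $\beta_1(g),\dots,\beta_{l-1}(g)\ne 0$ and $h$ with $\beta_2(h),\dots,\beta_l(h)\ne 0$, and then produces a single product $g^p h^q$ with carefully tuned exponents so that the defect inequality forces every $\beta_i(g^p h^q)\ne 0$. Your argument is more direct and uniform: you combine $l$ witnesses at once and reduce the choice of exponents to the standard fact that $\mathbb{Z}^l$ is not covered by finitely many proper hyperplanes. The paper's inductive argument is slightly more elementary in that it avoids any appeal to polynomial non-vanishing or linear algebra over $\mathbb{R}$, relying only on the triangle inequality; on the other hand, your approach gives a one-shot construction and makes the underlying linear structure transparent.
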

\begin{proof}
    Denote $A_i=\{g\in G:\beta_i(g)=0\}$ for $1\leq i\leq l$. It suffices to prove that $G-\bigcup_{i=1}^lA_i\ne\varnothing$. We prove by induction on $l$. If $l=1$, then there is nothing to prove since $\beta_1$ is non-zero.

    Suppose our conclusion holds for  $\leq l-1$ and now we consider the case $=l$. By induction hypothesis, we can pick $g\in G-\bigcup_{1\leq i\leq l-1}A_i$ and $h\in G-\bigcup_{2\leq i\leq l}A_i$. WLOG, we can assume that $g\in A_l$ and $h\in A_1$. Otherwise $g$ or $h$ will be in $G-\bigcup_{i=1}^lA_i$ and then we are done. By the homogeneity of $\beta_1,\beta_l$, we have $g^k\in A_l$ and $h^k\in A_1$ for any $k\neq 0\in \mathbb{Z}$.
    
    Denote $\Delta=\max_{1\leq j\leq l} \Delta(\beta_j)$. By the homogeneity of $\beta_1,\cdots, \beta_l$, we can pick $p>0$ such that
    $$\abs{\beta_1(g^p)}=\abs{p\cdot \beta_1(g)}>2\Delta.$$
    Then pick $q>0$ such that 
    $$\abs{\beta_l(h^q)}=\abs{q\cdot \beta_l(h)}>2\Delta$$
    and
    $$\abs{\beta_i(h^q)}=\abs{q\cdot\beta_i(h)}>\abs{\beta_i(g^p)}+2\Delta$$
    for any $2\leq i\leq l-1$.

    Finally, for any $1\leq i\leq l$,
    \begin{align*}
        \abs{\beta_i(g^ph^q)}\geq \abs{\beta_i(g^p)+\beta_i(h^q)}-\Delta\geq \bigg| \abs{\beta_i(h^q)}-\abs{\beta_i(g^p)}\bigg|-\Delta>\Delta>0.
    \end{align*}
    This means that $g^ph^q\in G-\bigcup_{i=1}^lA_i$ and we complete the proof.
\end{proof}

\begin{corollary}
\label{lem: finiteunion q.m.}
    Let $\beta_1,\cdots,\beta_l$ be non-zero homogeneous quasimorphisms on $G$. Denote $A_i=\{g\in G:\beta_i(g)=0\}$ for $1\leq i\leq l$. Then for any finite set $F=\{f_1,\cdots,f_n\}\subset G$, the set
    $$G-\bigcup_{1\leq i\leq l}A_i- \bigcup_{\substack{1\leq i\leq l\\1\leq j\leq n}}A_i\cdot f_j\ne\varnothing.$$ 
\end{corollary}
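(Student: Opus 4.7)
The plan is to bootstrap Lemma~\ref{lem: FinQH} by observing that a single element $h \in G$ simultaneously non-vanishing on $\beta_1,\ldots,\beta_l$ already furnishes an infinite family $\{h^k\}_{k \in \mathbb{Z}}$ of such elements, and some sufficiently large power will be far enough from each of the finitely many ``bad'' cosets $A_i \cdot f_j$ to avoid them all.

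First I would unpack the geometric meaning of the statement: $g \in A_i \cdot f_j$ is equivalent to $\beta_i(g f_j^{-1}) = 0$. So the task reduces to producing a single $g \in G$ such that $\beta_i(g) \neq 0$ for every $i$ and $\beta_i(g f_j^{-1}) \neq 0$ for every pair $(i,j)$. Lemma~\ref{lem: FinQH} immediately gives an $h \in G$ with $\beta_i(h) \neq 0$ for all $i$; by homogeneity, $\beta_i(h^k) = k \, \beta_i(h)$ grows linearly in $k$.

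Next I would estimate $\beta_i(h^k f_j^{-1})$ using the defect inequality $|\beta_i(h^k f_j^{-1}) - \beta_i(h^k) - \beta_i(f_j^{-1})| \le \Delta(\beta_i)$. Setting $\Delta := \max_i \Delta(\beta_i)$ and $M := \max_{i,j} |\beta_i(f_j^{-1})|$ (a finite maximum since $F$ is finite), the reverse triangle inequality gives
\[
|\beta_i(h^k f_j^{-1})| \ge k \, |\beta_i(h)| - M - \Delta,
\]
so choosing $k$ large enough that $k \cdot \min_i |\beta_i(h)| > M + \Delta$ produces $g = h^k$ with all required non-vanishing properties simultaneously.

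The conceptual content of the argument is already contained in Lemma~\ref{lem: FinQH}; the only obstacle here is purely bookkeeping, namely tracking that both the number of quasimorphisms $l$ and the size of $F$ are finite so that a single exponent $k$ suffices. No further induction is needed, and the finiteness of $F$ is used in an essential way only to ensure $M < \infty$.
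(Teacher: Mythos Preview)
Your proposal is correct and follows essentially the same approach as the paper: both apply Lemma~\ref{lem: FinQH} to obtain an element $h$ with $\beta_i(h)\neq 0$ for all $i$, then take a sufficiently large power $h^k$ so that $|\beta_i(h^k)|$ dominates the finitely many values $|\beta_i(f_j)|$ plus the defect. The only cosmetic difference is that you rewrite $g\in A_i f_j$ as $\beta_i(gf_j^{-1})=0$ and estimate directly, while the paper argues by contradiction from $g^k=a_i f_j$; the underlying inequality is the same.
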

\begin{proof}
    According to Lemma~\ref{lem: FinQH}, there exists an element $g$ such that $\beta_i(g)\ne 0$ for all $1\leq i\leq l$. Denote $\Delta=\max_{1\leq i\leq l} \Delta(\beta_i)$. Then we can choose $k>0$ such that
    $$\abs{\beta_i(g^k)}=\abs{k\cdot\beta_i(g)}>\max_{1\leq j\leq n}\abs{\beta_i(f_j)}+\Delta$$
    holds for any $1\leq i\leq l$. By the homogeneity of $\beta_1,\cdots, \beta_l$, we know $g^k\notin \bigcup_{i=1}^lA_i$.

    We claim that $g^k\notin \bigcup_{\substack{1\leq i\leq l\\1\leq j\leq n}}A_if_j$. If not, suppose $g^k\in A_if_j$. Then $g^k=a_if_j$ for some $a_i\in A_i$. By definition of $A_i$, $\beta_i(a_i)=0$. Hence we have
    $$\abs{\beta_i(g^k)}=\abs{\beta_i(a_if_j)}\leq \abs{\beta_i(a_i)+\beta_i(f_j)}+\Delta(\beta_i)\leq \abs{\beta_i(f_j)}+\Delta.$$
    This contradicts to our choice of $k$.

    In conclusion, $g^k\in G-\bigcup_{1\leq i\leq l}A_i- \bigcup_{\substack{1\leq i\leq l\\1\leq j\leq n}}A_i\cdot f_j$ and we complete the proof.
\end{proof}

\begin{proposition}
\label{simul hyp m+1-l}
    Let $G$ be a group acting either lineally or focally on Gromov-hyperbolic spaces $X_1,\cdots,X_l$. Then $\sh(G)\neq \varnothing$.
\end{proposition}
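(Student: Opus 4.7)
The plan is to leverage Lemma~\ref{busemann homo q.m.} and the multi-quasimorphism combination result Lemma~\ref{lem: FinQH}. The obstacle is that Lemma~\ref{busemann homo q.m.} requires $G$ to \emph{fix} a boundary point of each $X_i$, which holds automatically for focal actions but fails for non-orientable lineal actions (where an element may swap the two limit points). So the first step is to pass to a suitable finite-index subgroup of $G$ on which all actions fix a boundary point.

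More precisely, for each $i$ such that $G\curvearrowright X_i$ is lineal with limit set $\Lambda_i=\{\xi_i^+,\xi_i^-\}$, let $H_i:=\stab_G(\xi_i^+)\cap\stab_G(\xi_i^-)$, which has index at most $2$ in $G$; for each $i$ such that $G\curvearrowright X_i$ is focal with $G$-fixed boundary point $\xi_i$, set $H_i:=G$. Define
$$G_0:=\bigcap_{i=1}^{l} H_i,$$
which has index at most $2^l$ in $G$. Note that for each $i$, any hyperbolic element on $X_i$ fixes exactly two points of $\partial X_i$ which must lie in $\Lambda(G)$; in the lineal case this forces the fixed points to be $\{\xi_i^+,\xi_i^-\}$, so every hyperbolic element belongs to $H_i$. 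Consequently $G_0$ contains a hyperbolic element on each $X_i$ (take a hyperbolic element of $G$ on $X_i$ and, if necessary, replace it by a power lying in the finite-index subgroup $G_0$; hyperbolicity is preserved under taking nonzero powers). By construction $G_0$ fixes a boundary point $\xi_i\in\partial X_i$ for each $i$.

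Applying Proposition~\ref{Prop: BusemannQM} to the action $G_0\curvearrowright X_i$ and the fixed boundary point $\xi_i$, we obtain a homogeneous Busemann quasimorphism $\beta_i\colon G_0\to\mathbb{R}$. By Lemma~\ref{busemann homo q.m.}, $\beta_i(g)\ne 0$ if and only if $g$ is hyperbolic on $X_i$; in particular, since $G_0$ contains a hyperbolic element on each $X_i$, each $\beta_i$ is a nonzero homogeneous quasimorphism on $G_0$. Lemma~\ref{lem: FinQH} applied to $\beta_1,\ldots,\beta_l$ on $G_0$ then yields an element $g\in G_0$ with $\beta_i(g)\ne 0$ for every $1\le i\le l$. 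Invoking Lemma~\ref{busemann homo q.m.} once more, $g$ is simultaneously hyperbolic on each $X_i$, so $g\in\sh(G)$ and the proof is complete.

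The main (only real) difficulty is the passage to $G_0$: one must check that $G_0$ is a legitimate choice, namely that it is of finite index, fixes a boundary point in each $X_i$, and still contains hyperbolic elements for each action. All three follow from elementary properties of lineal and focal actions as outlined above, after which the argument reduces to a clean combination of Lemma~\ref{busemann homo q.m.} and Lemma~\ref{lem: FinQH}.
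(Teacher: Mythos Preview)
Your proof is correct and follows essentially the same approach as the paper: pass to a finite-index subgroup $G_0$ (the paper's $G'$) on which all lineal actions become orientable, so that Lemma~\ref{busemann homo q.m.} applies to every action, and then invoke Lemma~\ref{lem: FinQH} on the resulting nonzero Busemann quasimorphisms. You are simply more explicit than the paper about the construction of $G_0$ and the verification that each $\beta_i$ is nonzero on $G_0$.
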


\begin{proof}
    WLOG, assume that there exists $0\le n\le l$ such that $G\curvearrowright X_i$ is lineal for $1\leq i \le n$ and $G\curvearrowright X_j$ is focal for $n+1\le j\leq l$. Let $G^\prime$ be a finite-index subgroup of $G$ such that $G^\prime\curvearrowright X_i$ is orientable lineal for $1\le i\le n$. By Lemma~\ref{busemann homo q.m.}, there is a Busemann homogeneous quasimorphism $\beta_i\colon G^\prime\rightarrow\mathbb{R}$ for each $1\le i\le l$ such that $\beta_i(g)\ne0$ if and only if $g$ is hyperbolic on $X_i$. Denote $A_i=\{g\in G':\beta_i(g)=0\}$. By Lemma \ref{lem: FinQH}, $G^\prime-\bigcup_{i=1}^{l}A_i$ is nonempty. This is equivalent to saying that $\sh(G')\neq \emptyset$.
\end{proof}

Now, we are in a position to prove Theorem~\ref{simulhypiso}.

\begin{proof}[Proof of Theorem~\ref{simulhypiso}]
    If there is no action $G\curvearrowright X_i$ is of general type, then the conclusion is given by Proposition~\ref{simul hyp m+1-l}.
    Suppose there exist $1\leq m\leq l$ such that $G\curvearrowright X_i$ is of general type for $1\leq i\leq m$ and $G\curvearrowright X_i$ is either lineal or focal for $m+1\leq i\leq l$. Let $G^\prime$ be a finite-index subgroup of $G$ such that each lineal action $G^\prime\curvearrowright X_i$ is orientable lineal.

    By Corollary~\ref{Cor: SH}, there is a finite set $F\subset \mathcal{SH}(G^\prime;\{1,\cdots,m\})$ satisfies the following property: For any $h\in G^\prime$, there is an $f\in F$ such that $hf\in\mathcal{SH}(G^\prime;\{1,\cdots,m\})$. 
    
    We use the same notations as in proof of Proposition~\ref{simul hyp m+1-l}. Let $\beta_{m+1},\cdots,\beta_l$ be the corresponding Busemann homogeneous quasimorphisms on $G'$ and denote $A_i=\{g\in G': \beta_i(g)=0\}$. 
    By Corollary~\ref{lem: finiteunion q.m.}, the set
    $$G^\prime-\bigcup_{m+1\leq i\leq l}A_i\cup\bigcup_{\substack{m+1\leq i\leq l,\\f\in F}}A_i\cdot f^{-1}$$
    is non-empty and we can choose an element $h$ in it.

    Let $f\in F$ such that $hf\in\mathcal{SH}(G^\prime;\{1,\cdots,m\})$. We claim that $hf\notin\bigcup_{m+1\leq i\leq l}A_i$. Otherwise, $h\in A_if^{-1}$ for some $m+1\le i\le l$ which contradicts to our selection of $h$. By the definitions of $A_i$'s, $hf$ is also hyperbolic on $X_{m+1},\cdots,X_l$. Therefore, $hf\in\mathcal{SH}(G^\prime;\{1,\cdots,l\})\subseteq \mathcal{SH}(G;\{1,\cdots,l\})$.
\end{proof}

In fact, we can also prove
\begin{corollary}
\label{inf ind hyp SH}
    Let $G$ be a group acting non-elliptically and non-horocyclically on Gromov-hyperbolic spaces $X_1,\cdots,X_l$. Suppose there exists $1\le m\le l$ such that $G\curvearrowright X_i$ is of general type for $1\leq i\leq m$. Then there exist infinitely many elements in $\sh(G)$ such that they form an independent subset in $\sh(G;\{1,\cdots,m\})$.
\end{corollary}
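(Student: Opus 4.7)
The plan is to combine Theorem~\ref{simulhypiso} with the conjugation techniques of Lemma~\ref{lem: conjuagte contracting} and Lemma~\ref{lem: InfIndSet}, exploiting the crucial observation that being a hyperbolic isometry is invariant under conjugation. Thus any conjugate of an element in $\sh(G)$ remains in $\sh(G)$, and the task reduces to producing infinitely many conjugates of a single fixed $f \in \sh(G)$ that are pairwise weakly-independent on the general-type spaces $X_1, \ldots, X_m$.

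First, Theorem~\ref{simulhypiso} supplies an element $f \in \sh(G)$. Since each action $G \curvearrowright X_i$ for $1 \le i \le m$ is of general type, and hyperbolic isometries on a Gromov-hyperbolic space are contracting, one has $f \in \sc(G;\{1,\ldots,m\})$. Moreover, each such general-type action has contracting property, so Theorem~\ref{thm: simul contracting elements} applied to $X_1, \ldots, X_m$ produces an independent subset $\{g_1, \ldots, g_s\} \subset \sc(G;\{1,\ldots,m\})$ with $s$ as large as desired.

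Next, we extract a single $g \in \sc(G;\{1,\ldots,m\})$ that is weakly-independent from $f$ on every $X_i$ for $1 \le i \le m$. By Lemma~\ref{4independent}, among any three pairwise weakly-independent contracting elements on $X_1$, at least one is weakly-independent from $f$ on $X_1$. Iterating this Pigeonhole argument (exactly as in the proof of Lemma~\ref{lem: InfIndSet}), we discard at most two elements per space from $\{g_1, \ldots, g_s\}$, and provided $s > 2m$ we are left with at least one $g$ weakly-independent from $f$ on all of $X_1, \ldots, X_m$ simultaneously.

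Finally, for each $1 \le i \le m$, Lemma~\ref{lem: conjuagte contracting}(2) applied to the weakly-independent pair $(f, g)$ on $X_i$ yields a threshold $N_i$ such that for all $n \ge N_i$, the family $\{(f^n g^n)^k f (f^n g^n)^{-k} : k \in \mathbb{Z}\}$ is pairwise weakly-independent on $X_i$. Choosing $n \ge \max_{1 \le i \le m} N_i$, the same infinite family is simultaneously pairwise weakly-independent on each $X_i$ for $1 \le i \le m$. Since every member is a conjugate of $f \in \sh(G)$, it is hyperbolic on every $X_j$ for $1 \le j \le l$, so the family lies in $\sh(G)$ and is the required infinite independent subset inside $\sh(G;\{1,\ldots,m\})$. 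The main technical point is the Pigeonhole step selecting $g$: one must carefully track that the successive reductions across $X_1, \ldots, X_m$ do not exhaust the independent set $\{g_1, \ldots, g_s\}$, which is precisely what the bound $s > 2m$ guarantees.
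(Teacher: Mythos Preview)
Your proof is correct and follows essentially the same approach as the paper: obtain $f\in\sh(G)$ from Theorem~\ref{simulhypiso}, extract a large independent set on the general-type spaces, use the Pigeonhole reduction via Lemma~\ref{4independent} to find a companion $g$ weakly-independent from $f$ on all $X_1,\ldots,X_m$, and then invoke Lemma~\ref{lem: conjuagte contracting}(2) to conjugate $f$ into an infinite independent family. The only cosmetic difference is that the paper cites Corollary~\ref{Cor: SH} to obtain the independent set in $\sh(G;\{1,\ldots,m\})$, whereas you invoke Theorem~\ref{thm: simul contracting elements} for $\sc(G;\{1,\ldots,m\})$; since hyperbolic and contracting coincide on Gromov-hyperbolic spaces, these are equivalent and your citation is in fact the more direct one.
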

\begin{proof}
    By Theorem~\ref{simulhypiso}, we can choose $f\in\mathcal{SH}(G)$ and by Corollary~\ref{Cor: SH}, we can choose an independent set $H=\{h_1,\cdots,h_s\}\subset \mathcal{SH}(G;\{1,\cdots,m\})$ for $s>2m$. By Lemma~\ref{4independent}, there are at least $s-2$ elements in $H$ such that each of them is independent to $f$ on $X_1$. Repeating this process on $X_2$ for these $s-2$ elements in $H$, we get $s-4$ elements in $H$ such that each of them is independent to $f$ on $X_1,X_2$. Repeating this process until on $X_m$, we get at least one element $h$ in $H$ such that it is independent to $f$ on $X_1,\cdots,X_m$. 

    By Lemma~\ref{lem: conjuagte contracting}, there exists $N > 0$ such that for any $n,m > N$, $\{(f^m h^n)^k f (f^m h^n)^{-k} | k \in \mathbb{Z}\}$ is an infinite independent subset in $\sh(G;\{1,\cdots,m\})$. Note that they are all conjugations of $f$, so they also belong to $\sh(G)$.
\end{proof}

\subsection{Positive density of $\mathcal{SH}(G)$}\label{subsec: PosiDens}

In the last part of this paper, we are going to prove Theorem~\ref{IntroThm: SH pos den}. The proof strategy is the same as in subsection~\ref{subsec: positive density sc}. We first prove the following lemma which is an analogue of \cite[Theorem 2]{CW18}.

%By Theorem~\ref{simulhypiso}, it remains to show the positive density of $\sh(G)$. 

\begin{lemma}
\label{lem: positive density}
    Let $G$ be a group acting non-elliptically and non-horocyclically on finitely many Gromov-hyperbolic spaces $X_1,\ldots, X_l$. Then there exists a finite set $F\subset G$ with the following property. For any $g\in G$, there exists $f\in F$ such that $gf\in \mathcal{SH}(G)$.
\end{lemma}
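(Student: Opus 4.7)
The plan is to combine the general-type extension produced by Corollary~\ref{Cor: SH} with a Busemann quasimorphism argument that handles the lineal and focal factors. First I pass to a finite-index subgroup $G'\leq G$ on which every lineal action is orientable; after reindexing, assume that $G'\curvearrowright X_i$ is of general type for $1\leq i\leq m$ and is orientable lineal or focal for $m+1\leq i\leq l$. By Corollary~\ref{Cor: SH} I obtain a finite subset $F_0\subset\mathcal{SH}(G';\{1,\ldots,m\})$ such that for every $h\in G'$ some $f\in F_0$ satisfies $hf\in\mathcal{SH}(G';\{1,\ldots,m\})$. For the remaining factors, let $\beta_{m+1},\ldots,\beta_l$ be the Busemann homogeneous quasimorphisms from Lemma~\ref{busemann homo q.m.}; by Theorem~\ref{simulhypiso} I fix a reference element $h_0\in\mathcal{SH}(G')$, so in particular $\beta_i(h_0)\neq 0$ for all $m+1\leq i\leq l$.

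Set $\Delta_i:=\Delta(\beta_i)$ and $B_i:=\max_{f\in F_0}|\beta_i(f)|$, and choose integers $0=k_0<k_1<\cdots<k_{l-m}$ whose consecutive gaps exceed $\max_{m+1\leq i\leq l}(2B_i+4\Delta_i)/|\beta_i(h_0)|$. Define the finite set
\[
F':=\bigl\{\, f_0\cdot h_0^{k_j}\cdot f_1 \;:\; f_0, f_1\in F_0,\ 0\leq j\leq l-m \,\bigr\}.
\]
Given $g\in G'$, apply Corollary~\ref{Cor: SH} once to find $f_0\in F_0$ with $gf_0\in\mathcal{SH}(G';\{1,\ldots,m\})$, and then apply it again to each element $gf_0h_0^{k_j}$ to produce $f_1^{(j)}\in F_0$ with $gf_0h_0^{k_j}f_1^{(j)}\in\mathcal{SH}(G';\{1,\ldots,m\})$. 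This secures hyperbolicity on the general-type factors for every choice of $j$.

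For the lineal and focal factors, Lemma~\ref{busemann homo q.m.} reduces the task to showing that some $j^\ast$ yields $\beta_i\bigl(gf_0h_0^{k_{j^\ast}}f_1^{(j^\ast)}\bigr)\neq 0$ for all $m+1\leq i\leq l$. Applying the quasimorphism defect twice together with the homogeneity $\beta_i(h_0^{k_j})=k_j\beta_i(h_0)$ gives
\[
\bigl|\beta_i(gf_0h_0^{k_j}f_1^{(j)})-\beta_i(gf_0)-k_j\beta_i(h_0)-\beta_i(f_1^{(j)})\bigr|\leq 2\Delta_i,
\]
so $\beta_i$ can vanish only when $k_j$ falls in an interval of length at most $(2B_i+4\Delta_i)/|\beta_i(h_0)|$, which by the chosen spacing contains at most one $k_j$. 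With $l-m+1$ choices of $j$ and only $l-m$ indices $i$, the Pigeonhole Principle delivers the required $j^\ast$, and then $f:=f_0h_0^{k_{j^\ast}}f_1^{(j^\ast)}\in F'$ satisfies $gf\in\mathcal{SH}(G')$.

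Finally, to descend from $G'$ to $G$, fix coset representatives $t_1,\ldots,t_n$ of $G'$ in $G$ and set $F:=\{t_i^{-1}f : 1\leq i\leq n,\ f\in F'\}$; for any $g\in G$, write $g=g't_i$ with $g'\in G'$, use the $G'$-case to select a suitable $f\in F'$ for $g'$, and observe that $g\cdot(t_i^{-1}f)=g'f\in\mathcal{SH}(G')\subseteq\mathcal{SH}(G)$. I expect the main technical subtlety to be the pigeonhole step: because the adjustment $f_1^{(j)}$ genuinely depends on $j$, one cannot treat the bad set as a fixed interval, and the argument only works because $|\beta_i(f_1^{(j)})|$ is uniformly bounded by $B_i$ over $F_0$, so the union of bad $k$-intervals remains of uniform finite length depending only on $F_0$, $h_0$, and the defects.
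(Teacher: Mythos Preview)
Your argument is correct and takes a genuinely different, more modular route than the paper's.

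The paper re-opens the SC construction machinery on the general-type factors: it invokes Corollary~\ref{inf ind hyp SH} to produce an independent family $\{h_1,\ldots,h_s\}\subset\mathcal{SH}(G')$, applies the Extension Lemma distance estimates directly (mirroring the proof of Lemma~\ref{generalized extension lemma}) to cover the displacement conditions on $X_1,\ldots,X_m$, and simultaneously runs a pigeonhole on powers $f_i,\ldots,f_i^{l-m+1}$ against the Busemann quasimorphisms. Its finite set has the shape $\bigcup_{r=0}^m f_1^{j_r}\cdot\{f_i^M\}$.

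You instead treat Corollary~\ref{Cor: SH} as a black box for the general-type factors and use a \emph{single} reference element $h_0\in\mathcal{SH}(G')$ from Theorem~\ref{simulhypiso} to control the Busemann side, sandwiching powers $h_0^{k_j}$ between two applications of the $F_0$-extension. Your pigeonhole step is sound: since $|\beta_i(f_1^{(j)})|\leq B_i$ uniformly, the value of $k_j$ at which $\beta_i$ can vanish lies in a fixed interval of length $(2B_i+4\Delta_i)/|\beta_i(h_0)|$ centred at $-\beta_i(gf_0)/\beta_i(h_0)$, independent of $j$; the gap condition then forces at most one bad $k_j$ per index $i$. Your approach avoids redoing the Extension Lemma arithmetic and, notably, makes the descent from $G'$ to $G$ via coset representatives explicit --- a step the paper's proof actually glosses over (its Claim applies $\beta_j$ to $gf_i^p$ with $g\in G$, although $\beta_j$ is only defined on $G'$).

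One harmless redundancy: your first application of Corollary~\ref{Cor: SH} to obtain $f_0$ is not needed --- applying it directly to $gh_0^{k_j}$ already yields hyperbolicity on $X_1,\ldots,X_m$ and leaves the Busemann estimate unchanged (the bad-interval centre becomes $-\beta_i(g)/\beta_i(h_0)$, still $j$-independent). Dropping $f_0$ shrinks $F'$ by a factor of $|F_0|$, but keeping it causes no logical problem.
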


If each $G$-action on $X_1,\ldots,X_l$ is of general type, then Lemma~\ref{generalized extension lemma} implies that

\begin{corollary}\label{Cor: SH}
    Suppose that a group $G$ acts by general type actions on Gromov-hyperbolic spaces $X_1,\cdots,X_l$. Then there is a finite set $F\subset \sh(G)$ with the following property. For any $g\in G$, there is an $f\in F$ such that $gf\in \sh(G)$.
\end{corollary}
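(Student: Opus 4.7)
The strategy is to reduce directly to Lemma~\ref{generalized extension lemma} by identifying $\sh(G)$ with $\sc(G)$ under the general type hypothesis. Since each action $G\curvearrowright X_i$ is of general type on a Gromov-hyperbolic space, it is in particular non-elementary, and Example~\ref{exam: contracting}(1) gives that each action has contracting property. Hence Lemma~\ref{generalized extension lemma} applies and yields a finite set $F\subset\sc(G)$ such that for every $g\in G$ one can choose $f\in F$ with $gf\in\sc(G)$.

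The remaining step is to verify $\sc(G;\{i\})=\sh(G;\{i\})$ for each $i$. For $\sh\subseteq\sc$, a hyperbolic isometry $h$ of a Gromov-hyperbolic space has an $\langle h\rangle$-orbit that is quasi-isometrically embedded (since $h$ has strictly positive translation length), and any quasi-geodesic in a Gromov-hyperbolic space is a contracting subset by the Morse lemma. For the reverse inclusion, the definition of contracting element forces $n\mapsto h^n o_i$ to be a quasi-isometric embedding, so $h$ has unbounded orbits in $X_i$ and the orbit itself accumulates at two distinct points of $\partial X_i$; this rules out both elliptic and parabolic behavior and forces $h$ to be hyperbolic on $X_i$. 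Intersecting over $i\in\{1,\dots,l\}$ yields $\sc(G)=\sh(G)$, so the set $F$ produced by Lemma~\ref{generalized extension lemma} lies in $\sh(G)$ and has exactly the desired extension property.

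There is no real obstacle beyond these two equivalences; the whole argument is essentially a translation of the already-established simultaneously contracting statement into the simultaneously hyperbolic language, relying only on the standard fact that ``contracting'' and ``hyperbolic'' coincide for isometries of Gromov-hyperbolic spaces. The potentially subtle direction, $\sc\subseteq\sh$, could alternatively be justified by invoking Lemma~\ref{busemann homo q.m.} together with the observation that a contracting element cannot have bounded orbits (ruling out elliptic) nor fix a unique boundary point (ruling out parabolic), but the Morse-lemma argument above is the most direct.
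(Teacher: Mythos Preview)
Your proposal is correct and follows essentially the same approach as the paper: the paper simply states that Corollary~\ref{Cor: SH} is implied by Lemma~\ref{generalized extension lemma} once each general type action is recognized as having contracting property, and your argument spells out precisely this reduction together with the identification $\sc(G)=\sh(G)$ in the Gromov-hyperbolic setting. The only difference is that you make explicit (via the Morse lemma and the quasi-isometric embedding condition) why contracting and hyperbolic coincide, which the paper leaves implicit.
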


We now deal with another special case.

\begin{lemma}\label{lem: GEL for NoGT}
    Let $G$ be a group acting either lineally or focally on Gromov-hyperbolic spaces $X_1,\ldots, X_l$. Then there exists a finite set $F\subset \mathcal{SH}(G)$ with the following property. For any $g\in G$, there exists $f\in F$ such that $gf\in \mathcal{SH}(G)$.
\end{lemma}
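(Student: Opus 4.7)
The plan is to build $F$ as a finite set of consecutive powers of a single simultaneously hyperbolic element, using Busemann quasimorphisms to detect hyperbolicity on each $X_i$ and their defects to bound the number of ``bad'' powers.

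I would begin by reducing, exactly as in the proof of Proposition~\ref{simul hyp m+1-l}, to the case in which every lineal action is orientable, so that $G$ fixes a boundary point $\xi_i\in\partial X_i$ for each $1\le i\le l$. Then Lemma~\ref{busemann homo q.m.} supplies homogeneous quasimorphisms $\beta_i\colon G\to\mathbb{R}$ with defects $\Delta_i=\Delta(\beta_i)$, such that $\beta_i(g)\ne 0$ if and only if $g$ is hyperbolic on $X_i$. Lemma~\ref{lem: FinQH} then produces some $g_0\in G$ with $\beta_i(g_0)\ne 0$ for all $i$, i.e.\ $g_0\in\mathcal{SH}(G)$. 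By homogeneity, after replacing $g_0$ by a sufficiently large power I may assume
\[|\beta_i(g_0)|>2\Delta_i\qquad\text{for every }1\le i\le l.\]
I then set $F:=\{g_0^{\,j}:1\le j\le l+1\}$. Since each $g_0^{\,j}$ is a power of a simultaneously hyperbolic element, $F\subset\mathcal{SH}(G)$.

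To verify the covering property, fix any $g\in G$. The quasimorphism inequality together with homogeneity of $\beta_i$ gives
\[|\beta_i(gg_0^{\,j})-\beta_i(g)-j\beta_i(g_0)|\le\Delta_i\]
for every $i$ and $j$. Hence whenever $\beta_i(gg_0^{\,j})=0$ one must have
\[\Bigl|j+\tfrac{\beta_i(g)}{\beta_i(g_0)}\Bigr|\le\tfrac{\Delta_i}{|\beta_i(g_0)|}<\tfrac12,\]
so at most one integer $j$ is ``bad'' for each fixed index $i$. Since there are $l+1$ candidates $j\in\{1,\dots,l+1\}$ and only $l$ such constraints, the Pigeonhole Principle yields some $j$ for which $\beta_i(gg_0^{\,j})\ne 0$ for \emph{every} $i$, i.e.\ $gg_0^{\,j}\in\mathcal{SH}(G)$, as desired.

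The main technical point is the initial reduction: a priori $\beta_i$ is only defined on the index-$\le 2$ subgroup fixing the pair $\{\xi_i,\bar\xi_i\}$ in the non-orientable lineal case, and an element lying outside the common orientable subgroup cannot itself be simultaneously hyperbolic. After this reduction is in place (mirroring the device used in Proposition~\ref{simul hyp m+1-l}), the pigeonhole estimate above proceeds transparently; no further geometric input beyond Lemmas~\ref{busemann homo q.m.} and~\ref{lem: FinQH} is required.
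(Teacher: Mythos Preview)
Your proof is essentially identical to the paper's: both pass to the finite-index orientable subgroup, take $F$ to be $l+1$ consecutive powers of a single simultaneously hyperbolic element whose Busemann values exceed twice the defects, and conclude by pigeonhole via the defect bound. Your phrasing of the pigeonhole step (each index $i$ rules out at most one $j$) is marginally more direct than the paper's proof-by-contradiction, but the content is the same.
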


\begin{proof}
    Let $G^\prime$ be a finite-index subgroup of $G$ such that each lineal action $G^\prime\curvearrowright X_i$ is orientable lineal. We use the same notations as in proof of Proposition~\ref{simul hyp m+1-l}. Let $\beta_1,\cdots,\beta_l$ be the corresponding Busemann homogeneous quasimorphisms on $G'$ and denote $A_i=\{g\in G':\beta_i(g)=0\}$. Let $A\coloneqq\bigcup_{i=1}^l A_i$ and thus $\mathcal{SH}(G^\prime)=G^\prime-A$.

    By Proposition~\ref{simul hyp m+1-l}, $\sh(G^\prime)\neq \varnothing$ and we can pick $h\in G^\prime-A$. By the homogeneity of $\beta_1,\cdots,\beta_l$, $h^k\notin A$ for any $k\ne 0$. Then we pick $k>0$ such that
    $$|\beta_i(h^k)|=k|\beta_i(h)|>2\Delta$$
    for any $1\leq i\leq l$, where $\Delta=\max_{1\leq i\leq l}\Delta(\beta_i)$.
    Denote $F=\{h^{k},\cdots,h^{(l+1)k}\}$. By constructions, $F\subset\mathcal{SH}(G^\prime)\subset\sh(G)$.

    We claim that $F$ is the required set. If not, suppose there exists $g\in G$ such that $gh^{k},\cdots,gh^{(l+1)k}\notin\mathcal{SH}(G)$. This implies that $gh^{k},\cdots,gh^{(l+1)k}\in A=\bigcup_{i=1}^l A_i$. By the Pigeonhole Principle, there are $1\le p<q\le l+1$ and $1\leq i\leq l$ such that $gh^{pk},gh^{qk}\in A_i$. Then we have $\abs{\beta_i(gh^{pk})}=\abs{\beta_i(gh^{qk})}=0$ and thus $\abs{\beta_i(g)+\beta_i(h^{pk})}\leq \Delta$ and $\abs{\beta_i(g)+\beta_i(h^{qk})}\leq \Delta$. By the homogeneity of $\beta_i$ and triangle inequality, one gets that
    \begin{align*}
        (q-p)k|\beta_i(h)|=|\beta_i(h^{qk})-\beta_i(h^{pk})|\le |\beta_i(h^{qk})+\beta_i(g)-(\beta_i(g)+\beta_i(h^{pk}))|\le 2\Delta,
    \end{align*}
    which contradicts to the choice of $k$.
\end{proof}

We are in  a position to prove Lemma~\ref{lem: positive density}. The proof is similar to the proof of Lemma~\ref{generalized extension lemma}. %But we need more effort to let the simultaneously hyperbolic elements of general type actions be hyperbolic elements of lineal and focal actions.

\begin{proof}[Proof of Lemma~\ref{lem: positive density}]
    If each action $X\curvearrowright X_k$ is either lineal or focal for $1\le k\le l$, then the conclusion follows from Lemma~\ref{lem: GEL for NoGT}.
    Suppose there exists $1\leq m\leq l$ such that $G\curvearrowright X_i$ is of general type for $1\leq i\leq m$ and $G\curvearrowright X_i$ is either lineal or focal for $m+1\leq i\leq l$. Let $G^\prime$ be a finite-index subgroup of $G$ such that each lineal action $G^\prime\curvearrowright X_i$ is orientable lineal.  By using the same notations as in the proof of Lemma~\ref{lem: GEL for NoGT}, we denote by $\beta_{m+1},\cdots,\beta_l$ the corresponding Busemann homogeneous quasimorphisms on $G'$ and $A_i=\{g\in G': \beta_i(g)=0\}$. Let $A=\bigcup_{i=m+1}^l A_i$ and $\Delta=\max_{m+1\leq i\leq l}\Delta(\beta_i)$. Note that $\sh(G^\prime;\{m+1,\cdots,l\})=G^\prime-A$. 
    
    By Corollary~\ref{inf ind hyp SH}, we can choose a subset $\{h_1,\cdots,h_s\}\subset\mathcal{SH}(G^\prime;\{1,\cdots,l\})$ for $s>2l+1$ such that it is an independent set in $\sh(G^\prime;\{1,\cdots,m\})$. Denote $H=\left\{h_1,\cdots,h_s\right\}$. Fix a basepoint $o_k\in X_k$ for $1\le k\le m$. By the Extension Lemma (cf. Lemma~\ref{extensionlemma}), there exists a uniform constant $D>0$ with the following property: For any $1\leq k\leq m$, for any three elements $h_a,h_b,h_c\in H$, and any three integers $M_a,M_b,M_c$ satisfying $d\left(o_k,{h_j}^{M_j}o_k\right)>D$ for all $j\in\{a,b,c\}$, and any $g\in G^\prime$ satisfying $d\left(o_k,go_k\right)>D$, there exists $h\in\left\{ {h_a}^{M_a},{h_b}^{M_b},{h_c}^{M_c} \right\}$ such that $gh,hg$ are hyperbolic on $X_k$. 

    For each $1\leq i\leq s$, we can pick $n_i\gg 0$ such that 
    \begin{enumerate}
        \item $\min_{1\leq k\leq m}d(o_k,h_i^{n_i}o_k)\ge 2D$.
        \item $\abs{\beta_k(h_i^{n_i})}=\abs{n_i\cdot \beta_k(h_i)}>2\Delta$ for each $m+1\leq k\leq l$, $1\leq i\leq s$.
    \end{enumerate}

    Let $f_i=h_i^{n_i}$ for $1\le i\le s$. 
    Denote $L_k=d(o_k,f_1o_k)$ for $1\leq k\leq m$ for short. Denote $M=l-m+1$.%Pick $t>1$ such that $d(o_k,f_1^to_k)>2D+j_{m-1}L_k$ for any $1\leq k\leq m$. 
    
    Then pick $j_0=0<j_1<\cdots<j_{m}$ inductively such that 
    $$d(o_k,(f_1)^{j_r}o_k)>2D+j_{r-1}L_k$$ for any $1\leq k,r\leq m$.
    
    Let 
    $$F^\prime=\{f_1,\cdots,f_s,f_1^2,\cdots,f_s^2,\cdots,f_1^M,\cdots,f_s^M\}$$
    and 
    $$F=\bigcup_{r=0}^{m} f_1^{j_r}\cdot F^\prime$$%\wrx{what is $t$ here? Please check the proof.}

    \begin{claim}
    For any $g\in G$ and $1\le i\le s$, there exists $1\le M_i\le M$ such that $gf_i^{M_i}\in \sh(G^\prime; \{m+1,\cdots,l\})$.
    \end{claim}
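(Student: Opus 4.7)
The plan is to apply a pigeonhole argument using the Busemann homogeneous quasimorphisms $\beta_{m+1},\ldots,\beta_l$ on $G'$. The key numerical observation is that the range $\{1,\ldots,M\}=\{1,\ldots,l-m+1\}$ contains exactly one more integer than there are quasimorphisms, which forces a collision once we try to place each of the $M$ products into one of the $l-m$ ``bad sets'' $A_{m+1},\ldots,A_l$. Note that a priori the $\beta_k$'s are only defined on $G'$, so the claim really requires $gf_i^{M_i}\in G'$; this holds when $g\in G'$, and the case $g\in G\setminus G'$ will be handled in the outer argument by enlarging $F$ by finitely many coset representatives of $G'$ in $G$.

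Assume for contradiction that there exists $g\in G'$ and $1\le i\le s$ such that for every $M_i\in\{1,\ldots,M\}$, the element $gf_i^{M_i}$ fails to be hyperbolic on some $X_{k(M_i)}$ with $k(M_i)\in\{m+1,\ldots,l\}$. By Lemma~\ref{busemann homo q.m.}, this is equivalent to $\beta_{k(M_i)}(gf_i^{M_i})=0$ for every such $M_i$. Since $M=l-m+1>l-m$, the pigeonhole principle produces integers $1\le p<q\le M$ with $k(p)=k(q)=:k_0$, so that both $\beta_{k_0}(gf_i^p)=0$ and $\beta_{k_0}(gf_i^q)=0$.

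Writing $gf_i^q=(gf_i^p)\cdot f_i^{q-p}$ and applying the defect estimate for $\beta_{k_0}$ (which is at most $\Delta$) yields
$$\bigl|\beta_{k_0}(f_i^{q-p})\bigr|\le \bigl|\beta_{k_0}(gf_i^q)-\beta_{k_0}(gf_i^p)-\beta_{k_0}(f_i^{q-p})\bigr|+\bigl|\beta_{k_0}(gf_i^q)\bigr|+\bigl|\beta_{k_0}(gf_i^p)\bigr|\le \Delta.$$
On the other hand, by homogeneity of $\beta_{k_0}$ and the construction $f_i=h_i^{n_i}$ with $n_i$ chosen so that $|\beta_k(f_i)|=n_i|\beta_k(h_i)|>2\Delta$ for every $m+1\le k\le l$, one has
$$\bigl|\beta_{k_0}(f_i^{q-p})\bigr|=(q-p)\bigl|\beta_{k_0}(f_i)\bigr|\ge \bigl|\beta_{k_0}(f_i)\bigr|>2\Delta,$$
contradicting the previous line.

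The argument itself is essentially an elementary pigeonhole once the defect/homogeneity inputs are in place, so I do not expect any genuine obstacle beyond the bookkeeping: the main care-point is making sure that the lower bound $|\beta_k(f_i)|>2\Delta$ is imposed for \emph{every} $k\in\{m+1,\ldots,l\}$ simultaneously, which is already guaranteed by condition (2) in the choice of the exponents $n_i$. This is exactly why the list $F'=\{f_1,\ldots,f_s,\ldots,f_1^M,\ldots,f_s^M\}$ includes powers up to $M=l-m+1$: one needs strictly more powers than Busemann quasimorphisms to run the pigeonhole.
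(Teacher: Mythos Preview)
Your argument is correct and follows essentially the same pigeonhole-plus-defect strategy as the paper: both place the $M=l-m+1$ elements $gf_i,\ldots,gf_i^M$ into the $l-m$ sets $A_{m+1},\ldots,A_l$, find $p<q$ landing in the same $A_{k_0}$, and derive a contradiction with $|\beta_{k_0}(f_i)|>2\Delta$. Your factorisation $gf_i^q=(gf_i^p)f_i^{q-p}$ even gives the slightly sharper bound $|\beta_{k_0}(f_i^{q-p})|\le\Delta$ (the paper splits both products through $g$ and obtains $\le 2\Delta$), and your remark that the argument tacitly requires $g\in G'$ is well taken---the paper's proof makes the same implicit assumption.
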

    \begin{proof}[Proof of Claim]
        Recall that $A=\bigcup_{j=m+1}^l A_j$ and $\sh(G^\prime;\{m+1,\cdots,l\})=G^\prime-A$. It suffices to show that $\{gf_i, gf_i^2, \cdots, gf_i^{l-m+1}\}$ is not contained in $A$. If not, by the Pigeonhole Principle, there exist $m+1\le j\le l$ and $1\le p<q\le l-m+1$ such that $gf_i^p,gf_i^q\in A_j$. Then we have $\abs{\beta_j(gf_i^p)}=\abs{\beta_j(gf_i^q)}=0$ and thus $\abs{\beta_j(g)+\beta_j(f_i^p)}\leq \Delta$ and $\abs{\beta_j(g)+\beta_j(f_i^q)}\leq \Delta$. By the homogeneity of $\beta_j$ and triangle inequality, one gets that
    \begin{align*}
        (q-p)|\beta_j(f_i)|=|\beta_j(f_i^{q})-\beta_j(f_i^{p})|\le |\beta_j(f_i^{q})+\beta_j(g)-(\beta_j(g)+\beta_j(f_i^{p}))|\le 2\Delta,
    \end{align*}
    which contradicts to the choice of $f_i$. %\wrx{Here needs modification since $K_j$ is not a subgroup anymore. This can be done similarly to Lemma \ref{lem: GEL for NoGT}.}
    \end{proof}

    The remaining proof can be completed by similar arguments in the proof of Lemma~\ref{generalized extension lemma}. For any $g\in G$, the Claim in the proof of Lemma~\ref{generalized extension lemma} shows that there exists an $h\in\{gf_1^{j_0},gf_1^{j_1},gf_1^{j_2},\cdots,gf_1^{j_{m}}\}$ such that $d(o_k,ho_k)>D$ for any $1\leq k\leq m$. For such $h$, let $f_1^{M_1},\cdots,f_s^{M_s}$ be given by the above Claim such that $hf_i^{M_i}\in\sh(G';\{m+1,\cdots,l\})$ for any $1\leq i\leq s$. By using \hyperref[SC construction]{SC construction} for $h$ on $X_1,\cdots,X_m$ with $f_1^{M_1},\cdots,f_s^{M_s}$, we get an element $hf_i^{M_i}\in\sh(G';\{1,\cdots,m\})$. We conclude that $hf_i^{M_i}\in\sh(G')$, which means that there exists an element $f_1^{j_r}f_i^{M_i}\in F$ for some $0\leq r\leq m$ and some $1\leq i\leq s$ such that $gf_1^{j_r}f_i^{M_i}\in \sh(G')\subset \sh(G)$.
\end{proof}

\begin{corollary}[Positive density of simultaneously hyperbolic elements]\label{PosDes sh}
    Let $G$ be a group acting non-elliptically and non-horocyclically on finitely many Gromov-hyperbolic spaces $X_1,\ldots, X_l$. If $G$ is finitely generated by $S$, then there exists a constant $c=c(S)\in (0,1)$ such that $$\frac{\sharp  (S^{\le n}\cap \mathcal{SH}(G))}{\sharp S^{\le n}}>c$$ for any sufficiently large $n$.
\end{corollary}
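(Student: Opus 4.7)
The plan is to deduce this corollary from Lemma~\ref{lem: positive density} and Lemma~\ref{lem: PosDes}, in exact parallel with how Corollary~\ref{PosDes sc} was obtained by combining Lemma~\ref{generalized extension lemma} with Lemma~\ref{lem: PosDes}. The derivation should be immediate and formal.

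First, I would invoke Lemma~\ref{lem: positive density} to produce a finite subset $F \subset G$ with the property that for every $g \in G$ there exists $f \in F$ such that $gf \in \mathcal{SH}(G)$. Next, I would apply Lemma~\ref{lem: PosDes} with the subset $E := \mathcal{SH}(G) \subset G$ and this particular $F$; the hypothesis of that lemma is precisely the property just established. The lemma outputs a constant $c = c(S) \in (0,1)$, depending only on the generating set $S$ (concretely through $M := \max_{f \in F} |f|_S$, giving $c = 1/\sharp S^{\le 2M}$), such that
\[
\frac{\sharp(S^{\le n} \cap \mathcal{SH}(G))}{\sharp S^{\le n}} > c
\]
for every $n > 2M$, which is the conclusion.

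The substantive work has already been done: Lemma~\ref{lem: positive density} is where the real difficulty lies, since it must simultaneously handle the general-type factors (via \hyperref[SC construction]{SC construction} and the Extension Lemma) and the lineal/focal factors (via the Busemann quasimorphisms $\beta_i$ and a Pigeonhole argument on cosets of $A_i = \ker \beta_i$). Granted that lemma, the present corollary presents no additional obstacle, and the proof amounts to a one-line citation of the two preceding lemmas.
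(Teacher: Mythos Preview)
Your proposal is correct and matches the paper's proof exactly: the paper simply writes ``Combining Lemma~\ref{lem: PosDes} and Lemma~\ref{lem: positive density} completes the proof.'' Your additional remarks about the explicit constant $c = 1/\sharp S^{\le 2M}$ and the division of labor between the two lemmas are accurate elaborations of this one-line citation.
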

\begin{proof}
    Combining Lemma~\ref{lem: PosDes} and Lemma~\ref{lem: positive density} completes the proof.
\end{proof}

%By combining Theorem~\ref{simulhypiso} and Proposition~\ref{Prop: PosDes}, we complete the proof of Theorem~\ref{IntroThm: SH}.
Finally, we give an example to respond Remark~\ref{rem: not generic}~(2).

\begin{example}\label{Exa: DesGAP}
    %We give an example here to show that generally the constant $c$ in Corollary~\ref{PosDes sh} and Corollary~\ref{PosDes sc} can not be arbitrarily close to $1$. 
    Let $G=F_2\times F_3$ and $S_1, S_2$ be free bases of $F_2,F_3$, respectively. Then $G$ is generated by $S\coloneqq\{(x,1),(1,y):x\in S_1,y\in S_2\}$. Now we consider the diagonal action of $G$ on the product of two locally finite regular trees $\mathcal{G}(F_2,S_1)$ and $\mathcal{G}(F_3,S_2)$. 
    
    Some direct computations show that $\sharp S_1^{\leq n}=2\cdot 3^n-1$ and $\sharp S_2^{\leq n}=\frac{3}{2}5^n-\frac{1}{2}$. Then
    \begin{align*}
        \sharp S^{\leq n}&=\sum_{k=0}^n\sum_{i+j=k}\sharp S_1^{\leq i}\cdot\sharp S_2^{\leq j}\\
        &=\sum_{k=0}^n\left( 
        \frac{9}{8}5^{k+1}-2\cdot 3^{k+1}+\frac{4k+11}{8} \right)\\
        &=\frac{45}{32}\left(5^{n+1}-1\right)+3\left(-3^{n+1}+1\right)+\frac{(n+1)(2n+11)}{8}.
    \end{align*}
    Next, we count the number of non-simultaneously hyperbolic in $S^{\leq n}$. Note that $\sh(G)=\{(x,y):x\ne 1\in F_2,y\ne 1\in F_3\}$. Hence, the set of non-simultaneously hyperbolic elements is $\{(x,1),(1,y):x\in F_2,y\in F_3\}$. This implies that
    $$\sharp\left( 
    S^{\leq n}-\sh(G) \right)=\sharp S_1^{\leq n}+\sharp S_2^{\leq n}-1=\frac{3}{2}5^n+2\cdot 3^n-\frac{5}{2}.$$
    Therefore, we have 
    $$\lim_{n\rightarrow\infty}\frac{\sharp\left( 
    S^{\leq n}-\sh(G) \right)}{\sharp S^{\leq n}}=\lim_{n\rightarrow\infty}\frac{\frac{3}{2}5^n+2\cdot 3^n-\frac{5}{2}}{\frac{45}{32}\left(5^{n+1}-1\right)+3\left(-3^{n+1}+1\right)+\frac{(n+1)(2n+11)}{8}}=\frac{48}{225}>0.$$
    Hence, there exists a constant $c'\in (0,1)$ such that 
    $$\frac{\sharp\left( 
    S^{\leq n}\cap\sh(G) \right)}{\sharp S^{\leq n}}\leq c'<1$$
    for any sufficiently large $n$.

    Note that in this example, $\sh(G)=\sc(G)$. Therefore this example answers Remark~\ref{rem: not generic}~(2).
\end{example}

\bibliographystyle{amsplain}   
\bibliography{Reference}

\providecommand{\bysame}{\leavevmode\hbox to3em{\hrulefill}\thinspace}
\providecommand{\MR}{\relax\ifhmode\unskip\space\fi MR }
% \MRhref is called by the amsart/book/proc definition of \MR.
\providecommand{\MRhref}[2]{%
  \href{http://www.ams.org/mathscinet-getitem?mr=#1}{#2}
}
\providecommand{\href}[2]{#2}
\begin{thebibliography}{10}

\bibitem{ABO19}
Carolyn Abbott, Sahana Balasubramanya, and Denis Osin, \emph{Hyperbolic
  structures on groups}, Algebraic \& Geometric Topology \textbf{19} (2019),
  no.~4, 1747--1835.

\bibitem{BF24}
Sahana Balasubramanya and Talia Fernos, \emph{The semi-simple theory of higher
  rank acylindricity}, arXiv preprint arXiv:2407.04838 (2024).

\bibitem{BH99}
Martin~R. Bridson and Andr\'e Haefliger, \emph{Metric spaces of non-positive
  curvature}, Grundlehren der mathematischen Wissenschaften [Fundamental
  Principles of Mathematical Sciences], vol. 319, Springer-Verlag, Berlin,
  1999. \MR{1744486}

\bibitem{CF10a}
Danny Calegari and Koji Fujiwara, \emph{Combable functions, quasimorphisms, and
  the central limit theorem}, Ergodic Theory Dynam. Systems \textbf{30} (2010),
  no.~5, 1343--1369. \MR{2718897}

\bibitem{CCMT15}
Pierre-Emmanuel Caprace, Yves Cornulier, Nicolas Monod, and Romain Tessera,
  \emph{Amenable hyperbolic groups}, J. Eur. Math. Soc. (JEMS) \textbf{17}
  (2015), no.~11, 2903--2947. \MR{3420526}

\bibitem{Choi25}
Inhyeok Choi, \emph{Acylindrically hyperbolic groups and counting problems},
  arXiv preprint arXiv:2504.20985 (2025).

\bibitem{CU18}
Matt Clay and Caglar Uyanik, \emph{Simultaneous construction of hyperbolic
  isometries}, Pacific J. Math. \textbf{294} (2018), no.~1, 71--88.
  \MR{3743366}

\bibitem{CW18}
Mar\'ia Cumplido and Bert Wiest, \emph{A positive proportion of elements of
  mapping class groups is pseudo-{A}nosov}, Bull. Lond. Math. Soc. \textbf{50}
  (2018), no.~3, 390--394. \MR{3829727}

\bibitem{GTT18}
Ilya Gekhtman, Samuel~J. Taylor, and Giulio Tiozzo, \emph{Counting loxodromics
  for hyperbolic actions}, J. Topol. \textbf{11} (2018), no.~2, 379--419.
  \MR{3789828}

\bibitem{Gen19}
Anthony Genevois, \emph{Hyperbolicities in {${\rm CAT}(0)$} cube complexes},
  Enseign. Math. \textbf{65} (2019), no.~1-2, 33--100. \MR{4057355}

\bibitem{Gro87}
Michael Gromov, \emph{Hyperbolic groups}, Essays in group theory, Math. Sci.
  Res. Inst. Publ., vol.~8, Springer, New York, 1987, pp.~75--263. \MR{919829}

\bibitem{PS98}
Mark Pollicott and Richard Sharp, \emph{Comparison theorems and orbit counting
  in hyperbolic geometry}, Trans. Amer. Math. Soc. \textbf{350} (1998), no.~2,
  473--499. \MR{1376553}

\bibitem{WXY24}
Renxing Wan, Xiaoyu Xu, and Wenyuan Yang, \emph{Marked length spectrum rigidity
  in groups with contracting elements}, Journal of the London Mathematical
  Society \textbf{111} (2025), no.~4, e70146.

\bibitem{Wie17}
Bert Wiest, \emph{On the genericity of loxodromic actions}, Israel J. Math.
  \textbf{220} (2017), no.~2, 559--582. \MR{3666437}

\bibitem{Yan19}
Wenyuan Yang, \emph{Statistically convex-cocompact actions of groups with
  contracting elements}, Int. Math. Res. Not. IMRN (2019), no.~23, 7259--7323.
  \MR{4039013}

\end{thebibliography}
\end{document}